\newcommand{\Cat}{\mathsf{Cat}}
\newcommand{\cSet}{\mathsf{cSet}}
\newcommand{\Graph}{\mathsf{Graph}}
\newcommand{\Set}{\mathsf{Set}}
\newcommand{\sCat}{\Cat_{\Delta}} 
\newcommand{\Kan}{\mathsf{Kan}}
\newcommand{\cKan}{\Kan_{\boxcat}}
\newcommand{\sKan}{\Kan_{\Delta}}
\newcommand{\sreali}[1]{\lvert #1 \rvert_{\Simp}} 
\newcommand{\creali}[1]{\lvert #1 \rvert_{\boxcat}} 
\newcommand{\eid}[1][]{\underline{\mathrm{id}}_{#1}} 
\newcommand{\cCat}{\Cat_{\boxcat}} 
\newcommand{\cohnerve}[1][\boxcat]{{\nerve}_{#1}} 
\newcommand{\cSusp}{\Sigma_{\boxcat}} 
\newcommand{\cubmc}[1]{\mathsf{#1}} 
\newcommand{\face}[2]{\partial^{#1}_{#2}} 
\newcommand{\degen}[2]{\sigma^{#1}_{#2}} 
  \newcommand{\boxcat}{\mathord{\square}} 
  \newcommand{\conn}[2]{\gamma^{#1}_{#2}} 
  \newcommand{\cube}[1]{\mathord{\square^{#1}}} 
  \newcommand{\dfobox}[1][n]{\mathord{\sqcap^{#1}_{i,\varepsilon}}} 
\newcommand{\restr}[2]{{#1}|_{#2}} 
  \DeclareFontFamily{U}{dmjhira}{}
  \DeclareFontShape{U}{dmjhira}{m}{n}{ <-> dmjhira }{}
  \DeclareRobustCommand{\yo}{\text{\usefont{U}{dmjhira}{m}{n}\symbol{"48}}}
  \newcommand{\adj}{\dashv}
  \newcommand{\iso}{\mathrel{\cong}}
  \renewcommand{\equiv}{\simeq}
  \newcommand{\Ho}{\operatorname{Ho}}
  \newcommand{\op}{{\mathord\mathrm{op}}}
  \newcommand{\ob}{\operatorname{ob}}
  \newcommand{\colim}{\operatorname*{colim}}
  \newcommand{\hocolim}{\operatorname*{hocolim}}
  \newcommand{\slice}{\mathbin\downarrow}
  \newcommand{\Ex}{\operatorname{Ex}}
  \newcommand{\nerve}{\operatorname{N}}
  \newcommand{\Sk}{\operatorname{Sk}}
  \newcommand{\bd}{\partial}
  \newcommand{\gprod}{\otimes}
  \newcommand{\str}{\mathfrak{C}}
  \newcommand{\id}[1][]{\operatorname{id}_{#1}}
  \newcommand{\simp}[1]{\mathord\Delta^{#1}}
  \newcommand{\uvar}{\mathord{\relbar}}
  \renewcommand{\tilde}{\widetilde}
  \newcommand{\cat}[1]{\mathscr{#1}}
  \newcommand{\ccat}[1]{\mathsf{#1}}
  \newcommand{\ncat}[1]{\mathsf{#1}}
  \newcommand{\Simp}{\Delta}
  \newcommand{\sSet}{\ncat{sSet}}
  \newcommand{\from}{\colon}
  \newcommand{\ito}{\hookrightarrow}
  \declaretheorem[style=definition,within=section]{definition}
  \declaretheorem[style=definition,numberlike=definition]{example}
  \declaretheorem[style=definition,numberlike=definition]{remark}
  \declaretheorem[style=plain,numberlike=definition]{corollary}
  \declaretheorem[style=plain,numberlike=definition]{proposition}
  \declaretheorem[style=plain,numberlike=definition]{theorem}
  \declaretheorem[style=plain,numbered=no,name=Theorem]{theorem*}
  \declaretheorem[style=plain,numbered=no,name=Conjecture]{conjecture*}
  \Crefname{corollary}{Corollary}{Corollaries}
  \Crefname{definition}{Definition}{Definitions}
  \Crefname{lemma}{Lemma}{Lemmas}
  \Crefname{proposition}{Proposition}{Propositions}
  \Crefname{remark}{Remark}{Remarks}
  \Crefname{theorem}{Theorem}{Theorems}
  \Crefname{conjecture}{Conjecture}{Conjectures}
  \Crefname{notation}{Notation}{Notations}
  \Crefname{convention}{Convention}{Conventions}
  \numberwithin{equation}{section}
  \numberwithin{figure}{section}
  \theoremstyle{plain}
  \theoremstyle{definition}
  \theoremstyle{remark}
  \theoremstyle{plain}
  \theoremstyle{plain}
  \theoremstyle{plain}
  \providecommand{\corollaryname}{Corollary}
  \providecommand{\definitionname}{Definition}
  \providecommand{\lemmaname}{Lemma}
  \providecommand{\propositionname}{Proposition}
  \providecommand{\remarkname}{Remark}
  \providecommand{\theoremname}{Theorem}
  \newlist{axioms}{enumerate}{1}
  \Crefname{axiomsi}{}{}
  \newenvironment{tikzeq*}
  {
    \begingroup
    \begin{equation*}
    \begin{tikzpicture}[baseline=(current bounding box.center)]
  }
  {
    \end{tikzpicture}
    \end{equation*}
    \endgroup
    \ignorespacesafterend
  }
  \tikzset
  {
    diagram/.style=
    {
      matrix of math nodes,
      column sep={4.3em,between origins},
      row sep={4em,between origins},
      text height=1.5ex,
      text depth=.25ex
    },
    over/.style={preaction={draw=white,-,line width=6pt}},
    every to/.style={font=\footnotesize},
    inj/.style={right hook->},
    surj/.style={-{Latex[open]}},
    cof/.style={>->},
    fib/.style={->>},
  }
  \DeclareFontFamily{U}{mathx}{\hyphenchar\font45}
  \DeclareFontShape{U}{mathx}{m}{n}{
    <5> <6> <7> <8> <9> <10>
    <10.95> <12> <14.4> <17.28> <20.74> <24.88>
    mathx10}{}
  \DeclareSymbolFont{mathx}{U}{mathx}{m}{n}
  \DeclareFontFamily{U}{mathb}{\hyphenchar\font45}
  \DeclareFontShape{U}{mathb}{m}{n}{
    <5> <6> <7> <8> <9> <10>
    <10.95> <12> <14.4> <17.28> <20.74> <24.88>
    mathb10}{}
  \DeclareSymbolFont{mathb}{U}{mathb}{m}{n}
  \DeclareMathSymbol{\Rsh}{\mathrel}{mathb}{"E9}
  \DeclareFontFamily{U}{MnSymbolA}{}
  \DeclareFontShape{U}{MnSymbolA}{m}{n}{
    <-6> MnSymbolA5
    <6-7> MnSymbolA6
    <7-8> MnSymbolA7
    <8-9> MnSymbolA8
    <9-10> MnSymbolA9
    <10-12> MnSymbolA10
    <12-> MnSymbolA12}{}
  \DeclareSymbolFont{MnSyA}{U}{MnSymbolA}{m}{n}
  \DeclareMathSymbol{\twoheaddownarrow}{\mathrel}{MnSyA}{27}
  \newcommand{\MSC}[1]{%
    \let\thempfn\relax
    \footnotetext[0]{2020 Mathematics Subject Classification: #1.}
  }
\DeclareMathSymbol{:}{\mathpunct}{operators}{"3A}
\newcommand{\notehelper}[3]{\textcolor{#3}{$\blacksquare$}\marginpar{\ifodd\thepage\raggedright\else\raggedleft\fi\color{#3}\tiny \textbf{#2:} #1}}
\tikzset{curve/.style={settings={#1},to path={(\tikztostart)
    .. controls ($(\tikztostart)!\pv{pos}!(\tikztotarget)!\pv{height}!270:(\tikztotarget)$)
    and ($(\tikztostart)!1-\pv{pos}!(\tikztotarget)!\pv{height}!270:(\tikztotarget)$)
    .. (\tikztotarget)\tikztonodes}},
    settings/.code={\tikzset{quiver/.cd,#1}
        \def\pv##1{\pgfkeysvalueof{/tikz/quiver/##1}}},
    quiver/.cd,pos/.initial=0.35,height/.initial=0}
\tikzset{between/.style n args={2}{/tikz/spath/at end path construction={
    \tikzset{spath/split at keep middle={current}{#1}{#2}}
}}}
\tikzset{tail reversed/.code={\pgfsetarrowsstart{tikzcd to}}}
\tikzset{2tail/.code={\pgfsetarrowsstart{Implies[reversed]}}}
\tikzset{2tail reversed/.code={\pgfsetarrowsstart{Implies}}}
\tikzset{no body/.style={/tikz/dash pattern=on 0 off 1mm}}
\global\long\def\pr#1{\left(#1\right)}%
\global\long\def\CS{\mathsf{cSet}}%
\global\long\def\SS{\mathsf{sSet}}%
\global\long\def\Fun{\operatorname{Fun}}%
\global\long\def\hocolim{\operatorname{hocolim}}%
\global\long\def\pr#1{\left(#1\right)}%
\global\long\def\id{\operatorname{id}}%
\global\long\def\comma{\downarrow}%
\global\long\def\lim{\operatorname{lim}}%
\global\long\def\colim{\operatorname{colim}}%
\global\long\def\hocolim{\operatorname{hocolim}}%
\renewcommand{\colim}{\operatorname*{colim}}
\renewcommand{\hocolim}{\operatorname*{hocolim}}
\renewcommand{\id}[1][]{\mathrm{id}_{#1}}
\tikzstyle{vertex}=[circle, fill, minimum size=5pt, inner sep=0pt]
\tikzstyle{openvertex}=[circle, draw, fill=white, line width=0.85pt, opacity=0.6, minimum size=5pt, inner sep=0pt]
\tikzstyle{colorvertex}=[circle, draw, minimum size=6pt, inner sep=0pt]
\definecolor{vertex1}{RGB}{255,0,0}
\definecolor{vertex2}{RGB}{0,0,255}
\definecolor{vertex3}{RGB}{0,255,0}
\definecolor{vertex4}{RGB}{255,255,0}
\definecolor{vertex5}{RGB}{255,0,255}
\newcommand{\eps}{\varepsilon}
\renewcommand{\cat}[1]{\mathcal{#1}}
\newcommand{\scat}[1]{\mathsf{#1}} 
\renewcommand{\ccat}[1]{\mathsf{#1}} 
\newcommand{\ecat}[1]{\mathsf{#1}} 
\newcommand{\adjunct}[4]{#1 \from #3 \rightleftarrows #4 : \! #2} 
\newcommand{\catname}[1]{\mathsf{#1}} 
\newcommand{\PSh}{\catname{PSh}}
\renewcommand{\SS}{\mathcal{S}}
\newcommand{\diag}{\operatorname{diag}}
\begin{document}

\author{Kensuke Arakawa \and Daniel Carranza \and Krzysztof Kapulkin}
\title{Derived mapping spaces of $\infty$-categories}

\maketitle

 \begin{abstract} 
   We prove the Derived Mapping Space Lemma, which generalizes the central theorem of Cisinski's work on calculus of fractions for $\infty$-categories, and allows us to provide a unified framework for analyzing mapping spaces in localizations of ($\infty$-)categories.
   As an application, we give a sufficient condition for when a cubical or simplicial category is the localization of its underlying category at homotopy equivalences.
 \end{abstract}
 
 \section*{Introduction}

Localizations of ($\infty$-)categories are of key interest in homotopy theory.
Given an ($\infty$-)category $\cat{C}$ along with a class of maps $W$, thought of as weak equivalences, we can construct an $\infty$-category $\cat{C}[W^{-1}]$ that universally inverts all maps from $W$.
The standard $1$-categorical localization, as considered in \cite{gabriel-zisman,quillen:book}, arises then as the homotopy category of $\cat{C}[W^{-1}]$, but the $\infty$-category $\cat{C}[W^{-1}]$ itself carries additional information about the mapping spaces between any pair of objects.

Over the years, many techniques have been developed to access these mapping spaces.
Historically, the first such technique was to guess-and-check, i.e., work with simplicial model categories, present in Quillen's seminal text \cite{quillen:book}, where one starts with a simplicial enrichment and verifies that it recovers the correct mapping spaces using the SM7 axiom.
However, the homotopy types of these mapping spaces can be recovered without the explicit simplicial enrichment using frames, as shown in \cite{hovey,Hirschhorn}.

Another incarnation of this problem comes from localizations of more general simplicial and cubical categories that are not necessarily part of a simplicial model category structure.
The question then becomes: when do they present the localization of their underlying $1$-category at the class of homotopy equivalences?
Here, by `homotopy equivalences,' we mean maps with an inverse up to a zig-zag of $1$-simplices/cubes.
For example, the work of Hinich \cite{hinich:dwyer-kan-localization-revisited} implies that this is the case for categories of fibrant-cofibrant objects in a model category.
In a different direction, Lurie gives a general way of recognizing this phenomenon \cite[Prop.~1.3.4.7]{lurie:higher-algebra}.
Both of these results are generally confined to the setting of simplicial categories, and it is at the very least unclear how their proofs in the cubical case (and the non-symmetric monoidal product) could be adapted, which is unfortunate because of a potential application to discrete homotopy theory \cite{carranza-kapulkin-kim}.

Lastly, the question of computing localizations of $\infty$-categories was recently considered by Cisinski \cite[Ch.~7]{cisinski:higher-categories} and the last two authors along with Lindsey \cite{carranza-kapulkin-lindsey}.
They both generalize the work of Gabriel--Zisman to $\infty$-categories, albeit in different ways: \cite[Ch.~7]{cisinski:higher-categories} describes a general framework for working with localizations, while \cite{carranza-kapulkin-lindsey} describes a specific and computationally efficient model of the localization.

The goal of this paper is to give a catch-all statement that subsumes this disparate work via one statement that instantiates to all of the above.
Specifically, our main result is the following:

\begin{theorem*}[Derived Mapping Space Lemma, cf.~\cref{thm:DMSL}]
  Let $(\cat{C}, W)$ be an ($\infty$-)category with a class of maps.
  If $Y_\bullet \from \cat{I} \to Y \slice \cat{C}$ is a resolution of an object $Y \in \cat{C}$ then there is an equivalence
  \[ \cat{C}[W^{-1}](X, Y) \equiv \hocolim\limits_{i \in \cat{I}} \cat{C}(X, Y_i)\text{,} \]
  natural in $X$.
\end{theorem*}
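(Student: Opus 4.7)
The plan is to present $\cat{C}[W^{-1}](X, Y)$ as a homotopy colimit over a universal diagram indexed by $W$-morphisms out of $Y$, and then argue that the resolution $Y_\bullet$ is a homotopy cofinal subdiagram. As a first step, I would establish (or invoke) the ``universal'' case: using a concrete model of the localization---e.g., Cisinski's marked simplicial sets \cite{cisinski:higher-categories} or the explicit model of \cite{carranza-kapulkin-lindsey}---one has an equivalence
\[
\cat{C}[W^{-1}](X, Y) \;\simeq\; \hocolim\limits_{(Y \to Z) \in Y \slice W} \cat{C}(X, Z),
\]
where $Y \slice W \subseteq Y \slice \cat{C}$ denotes the full subcategory on morphisms belonging to $W$. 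This reproduces the central theorem of Cisinski cited in the introduction in the case of a right calculus of fractions, but the formula itself is formal and should hold in full generality.

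Second, I would construct the natural comparison map from the data of a resolution. For each $i \in \cat{I}$, the structure map $w_i : Y \to Y_i$ lies in $W$ (this is essentially what it means for $Y_\bullet$ to be a resolution), so $Y_\bullet$ factors as $\cat{I} \to Y \slice W \hookrightarrow Y \slice \cat{C}$. Composing this factorization with the universal hocolim above yields the natural transformation
\[
\hocolim_{i \in \cat{I}} \cat{C}(X, Y_i) \longrightarrow \cat{C}[W^{-1}](X, Y),
\]
and naturality in $X$ is immediate since the construction only manipulates the target variable.

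The heart of the proof is showing this map is an equivalence. My strategy is a cofinality argument, in the spirit of Quillen's Theorem A upgraded to $\infty$-categories: one checks that the induced functor $\cat{I} \to Y \slice W$ is homotopy cofinal, so that the hocolim over $Y \slice W$ in the universal formula reduces to the hocolim over $\cat{I}$. I expect the axioms defining a ``resolution'' to be tailored precisely for this application; the technical core of the argument is verifying that for every $W$-morphism $w : Y \to Z$ the $\infty$-category of factorizations of $w$ through the diagram $Y_\bullet$ is weakly contractible. This cofinality check is the main obstacle and the place where the specific axioms of a resolution enter the argument; once it is in hand, the desired equivalence, together with its naturality in $X$, follows.
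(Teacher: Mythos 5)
Your proposal rests on the formula
\[
\cat{C}[W^{-1}](X, Y) \simeq \hocolim\limits_{(Y \to Z) \in Y \slice W} \cat{C}(X, Z),
\]
which you assert is ``formal and should hold in full generality.'' It does not: this identity \emph{is} (a special case of) Cisinski's Theorem~7.2.8, and establishing it requires that $Y \slice W$ itself satisfy a calculus-of-fractions or resolution-type condition. For a bare relative $\infty$-category $(\cat{C}, W)$ the mapping space in the localization is presented by iterated zig-zags (hammocks), not by a single one-sided colimit. Conditions (R1)--(R3) are precisely the hypotheses the paper imposes to make a colimit formula valid for a \emph{given} diagram $Y_\bullet$; they say nothing about the full comma category $Y \slice W$. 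So the first step of your argument assumes the hard part. A second, smaller, gap: condition (R2) says $Y \to Y_i$ becomes an equivalence in $\cat{C}[W^{-1}]$, which does not imply $Y \to Y_i \in W$ unless $W$ is saturated, so the claimed factorization of $Y_\bullet$ through $Y \slice W$ is not automatic. Third, even if one repaired the first two points, the homotopy cofinality of $\cat{I} \to Y \slice W$ is a strong structural claim that the axioms do not visibly deliver; (R3) is a statement about the colimit presheaf inverting weak equivalences, which is a different flavor of condition than cofinality, and you have not indicated how to derive one from the other.

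The paper's proof avoids all of this by a more elementary adjunction argument. It applies the cocontinuous functor $\gamma_!$ to the comparison square built from the cone $\cat{C}(-, Y) \to \cat{C}(-, Y_\bullet)$, uses (R1) to identify $\colim_{\cat{I}} \cat{C}(-,Y)$ with $\cat{C}(-,Y)$, uses $\gamma_! \yo \simeq \yo \gamma$ to rewrite $\gamma_!$ of representables, uses (R2) to conclude that the induced map $\colim \cat{C}[W^{-1}](-,Y) \to \colim \cat{C}[W^{-1}](-,Y_\bullet)$ is an equivalence (it is a colimit of equivalences), and finally uses (R3) to see that the unit $\eta$ of $\gamma_! \dashv \gamma^*$ is an equivalence at $\colim \cat{C}(-,Y_\bullet)$, descending the conclusion back to $\PSh(\cat{C})$. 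No cofinality, and no appeal to a prior localization formula. The paper's Remark~2.5 does observe that \cref{thm:DMSL} can be reduced to Cisinski's theorem, but via a different device (passing to $\{\emptyset\} \star \cat{I}$), and in any case your proposal inverts the logical order: you cannot take Cisinski's colimit formula over $Y\slice W$ as a free starting point.
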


Here, by a resolution, we mean a homotopically constant diagram defined on a weakly contractible category that is suitably compatible with $Y$ under taking colimits.
The Derived Mapping Space Lemma was in fact inspired by, and generalizes, \cite[Thm.~7.2.8]{cisinski:higher-categories}. 
Although our generalization can be reduced to the statement of \cite[Thm.~7.2.8]{cisinski:higher-categories}, as explained in \cref{rmk:relation-to-cisinski}, the proof we give is very different from Cisinski's original proof.

The Derived Mapping Space Lemma instantiates to a variety of settings; upon proving it, we discuss in turn:
\begin{itemize}
  \item how to recover that model-categorical frames compute the correct mapping space (\cref{cor:frames});
  \item how to recognize when a cubical category is the localization of its underlying category at the class of homotopy equivalences (\cref{cubical-cat-is-localization}), and how to deduce the analogous result for simplicial categories (\cref{simplicial-cat-is-localization}) and $2$-categories (\cref{localization-2-categories});
  \item how to prove that the cubical category of graphs, introduced in \cite{carranza-kapulkin:cubical-graphs}, is indeed the localization of the category of graphs at the class of homotopy equivalences (\cref{localization-graphs}).
\end{itemize}
The last point on this list closes a considerable gap in the resolution \cite{carranza-kapulkin-kim} of an open problem from the workshop on ``Discrete and Combinatorial Homotopy Theory'' at the American Institute of Mathematics in 2023.
This also follows from \cref{cubical-cat-is-localization}, further showcasing the utility of cubical categories as a model for $\infty$-categories.
Case in point: the result for simplicial categories (\cref{simplicial-cat-is-localization}) is a simple consequence of the one for cubical categories (which is not true in the reverse direction), but the latter covers additional cases, like the cubical category of graphs.

All in all, the Derived Mapping Space Lemma is both easy to state and prove, while providing powerful consequences that were otherwise unknown or required quite advanced and specialized techniques to establish.

\paragraph{Organization and style.}
The paper is organized in a fairly straightforward manner: in \cref{sec:preliminaries}, we introduce the necessary vocabulary and establish prerequisites to state and prove our main theorem; in \cref{sec:dmsl}, we state and prove the Derived Mapping Space Lemma; and in \cref{sec:consequences}, we discuss all of the aforementioned corollaries.
Given the heavy use of cubical sets and cubical categories, we have collected the necessary background on these topics in \cref{sec:cubical-sets,sec:cubical-categories}, respectively.
In particular, the key results of \cref{sec:cubical-categories} establish the existence of the cubical analogue of the Bergner model structure, its equivalence with the Joyal model structure, and a ``helper lemma'' showing that mapping spaces of a cubical category agree with the mapping spaces of its associated quasicategory.

Throughout the paper, we work with $\infty$-categories in a model-independent way, although all of our results can be substantiated by the model in quasicategories \cite{joyal:theory-of-qcats,lurie:htt,cisinski:higher-categories}.

 \section{Preliminaries} \label{sec:preliminaries}

In this section, we collect the necessary background to state and prove the Derived Mapping Space Lemma (\cref{thm:DMSL}).

\paragraph{Size issues.} 
Whenever we speak of a general $\infty$-category $\cat{C}$, we assume the existence of a large enough Gro\-then\-dieck universe $\mathcal{U}$ so that $\cat{C}$ is small with respect to $\cat{U}$.
The category of $\infty$-groupoids should always intepreted as the category of $\mathcal{U}$-small $\infty$-groupoids so that the category of ($\infty$-)presheaves $\PSh(\cat{C})$ is locally $\mathcal{U}$-small.
The terms \emph{complete} and \emph{cocomplete} are with respect to $\mathcal{U}$-small colimits.
By the Gabriel--Ulmer duality, all statements remain true if $\cat{C}$ is not $\mathcal{U}$-small, but nonetheless $\mathcal{U}$-presentable.

Recall that a \emph{relative $\infty$-category} is a pair $(\cat{C}, W)$ where $\cat{C}$ is an $\infty$-category and $W$ is a subcategory of $\cat{C}$.
We refer to arrows of $W$ as \emph{weak equivalences}.
\begin{definition}
  Given a relative $\infty$-category $(\cat{C}, W)$, a \emph{localization} of $\cat{C}$ at $W$ is a functor
  \[ \gamma \from \cat{C} \to \cat{C}[W^{-1}] \]
  with the following properties:
  \begin{enumerate}
    \item $\gamma$ sends weak equivalences to equivalences of $\cat{C}[W^{-1}]$; and
    \item for any $\infty$-category $\cat{D}$, the pre-composition functor
    \[ \gamma^* \from \Fun(\cat{C}[W^{-1}], \cat{D}) \to \Fun(\cat{C}, \cat{D}) \]
    is full and faithful, with essential image being those functor $\cat{C} \to \cat{D}$ which send weak equivalences to equivalences.
  \end{enumerate}
\end{definition} 
Localizations of relative $\infty$-categories always exist, and are unique up to a contractible space of choices \cite[tags 01N0, 01N1]{kerodon}.
Given an object $X$ or a morphism $f$ in $\cat{C}$, we follow the convention of writing $\gamma(X)$ and $\gamma(f)$ as simply $X$ and $f$; this is justified by the fact that there always exists a localization which is a bijection on objects and injective on morphisms.

We write $\SS$ for the $\infty$-category of $\infty$-groupoids.
For an $\infty$-category $\cat{C}$, we write $\PSh(\cat{C})$ for the category of \emph{$\infty$-presheaves} on $\cat{C}$, i.e.\ functors from $\cat{C}^\op$ to $\infty$-groupoids.

The $\infty$-category $\PSh(\cat{C})$ is complete and cocomplete.
Given any functor $\varphi \from \cat{C} \to \cat{D}$, the pre-composition functor
\[ \varphi^* \from \PSh(\cat{D}) \to \PSh(\cat{C}) \]
admits both adjoints, given by left and right Kan extensions.
We denote the left adjoint by $\varphi_! \from \PSh(\cat{C}) \to \PSh(\cat{D})$.
We will not make use of the right adjoint.

The left adjoint $\varphi_! \from \PSh(\cat{C}) \to \PSh(\cat{D})$ admits an explicit description on representables:
\begin{proposition}[{\cite[Proposition 5.2.6.3]{lurie:htt}}] \label{left-adjoint-representables}
  The square
  \[ \begin{tikzcd}
    \cat{C} \ar[r, "\yo"] \ar[d, "\varphi"'] & \PSh(\cat{C}) \ar[d, "\varphi_!"] \\
    \cat{D} \ar[r, "\yo"] & \PSh(\cat{D})
  \end{tikzcd} \]
  commutes up to natural equivalence.
\end{proposition}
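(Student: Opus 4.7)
The plan is to prove the natural equivalence $\varphi_! \circ \yo \simeq \yo \circ \varphi$ by a standard Yoneda calculation, using only the defining adjunction $\varphi_! \dashv \varphi^*$ and the $\infty$-categorical Yoneda lemma.

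First I would fix an object $c \in \cat{C}$ and an arbitrary $\infty$-presheaf $F \in \PSh(\cat{D})$, and compute the mapping space $\Map_{\PSh(\cat{D})}(\varphi_! \yo(c), F)$ via the chain of natural equivalences
\[ \Map_{\PSh(\cat{D})}(\varphi_! \yo(c), F) \simeq \Map_{\PSh(\cat{C})}(\yo(c), \varphi^* F) \simeq (\varphi^* F)(c) = F(\varphi(c)) \simeq \Map_{\PSh(\cat{D})}(\yo(\varphi(c)), F)\text{,} \]
where the first equivalence is the defining adjunction, the second and fourth are instances of the Yoneda lemma in $\PSh(\cat{C})$ and $\PSh(\cat{D})$ respectively, and the middle identification is just unwinding the definition of $\varphi^*$. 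Since this chain is natural in $F \in \PSh(\cat{D})$, the (enriched/$\infty$-categorical) Yoneda lemma then produces an equivalence $\varphi_! \yo(c) \simeq \yo(\varphi(c))$ in $\PSh(\cat{D})$.

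Next I would upgrade this pointwise equivalence to a natural equivalence of functors $\cat{C} \to \PSh(\cat{D})$. The cleanest way is to package the above computation as a natural equivalence of two functors $\cat{C}^\op \to \Fun(\PSh(\cat{D}), \SS)$, namely $c \mapsto \Map(\varphi_! \yo(c), \uvar)$ and $c \mapsto \Map(\yo(\varphi(c)), \uvar) \simeq \ev_{\varphi(c)}$; since each of these is represented respectively by $\varphi_! \yo(c)$ and $\yo(\varphi(c))$, the fully faithful Yoneda embedding $\PSh(\cat{D}) \hookrightarrow \Fun(\PSh(\cat{D}), \SS)^\op$ then reflects this into a natural equivalence $\varphi_! \circ \yo \simeq \yo \circ \varphi$, as desired.

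The main obstacle is purely bookkeeping: making sure the chain of equivalences above is genuinely natural in both variables $c$ and $F$ at the $\infty$-categorical level, so that the final appeal to Yoneda produces an honest natural transformation rather than only a coherent family of pointwise equivalences. Since this is a textbook fact (and is precisely the cited \cite[Proposition 5.2.6.3]{lurie:htt}), I would either invoke it directly or observe that the $\Map$-computations assemble into a natural equivalence of $\Fun(\cat{C}, \PSh(\cat{D}))$-valued functors by construction, leaving no real work beyond the one-line Yoneda manipulation.
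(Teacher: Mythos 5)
Your argument is correct, and it is the standard Yoneda proof of this fact. The paper itself gives no proof here—it simply cites \cite[Prop.~5.2.6.3]{lurie:htt}—and the chain of equivalences you write (adjunction $\varphi_! \dashv \varphi^*$, then Yoneda in $\PSh(\cat{C})$, unwinding $\varphi^*$, then Yoneda in $\PSh(\cat{D})$, all natural in both variables) is precisely the proof one finds in Lurie, so your approach matches the cited source.
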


In the case that $\varphi$ is a localization, the pre-composition functor is full and faithful.
\begin{proposition} \label{gamma-full-faithful}
  Let $\gamma \from \cat{C} \to \cat{C}[W^{-1}]$ be a localization.
  The functor $\gamma^* \from \PSh(\cat{C}[W]^{-1}) \to \PSh(\cat{C})$ is full and faithful, and its essential image is given by functors $\cat{C}^\op \to \SS$ which send weak equivalences to equivalences of $\infty$-groupoids.
\end{proposition}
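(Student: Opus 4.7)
The plan is to reduce this to the universal property of the localization spelled out in the previous definition, applied after passing to opposite categories. Observe first that taking opposites is an involutive auto-equivalence of the category of $\infty$-categories, and that the two conditions defining a localization (inverting a chosen subcategory, and pre-composition being fully faithful with the expected essential image) are self-dual. Consequently, if $\gamma \from \cat{C} \to \cat{C}[W^{-1}]$ is a localization at $W$, then $\gamma^{\op} \from \cat{C}^{\op} \to \cat{C}[W^{-1}]^{\op}$ is a localization of $\cat{C}^{\op}$ at $W^{\op}$ (i.e.\ the subcategory of $\cat{C}^{\op}$ consisting of the same arrows as $W$).

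Now instantiate the universal property for $\gamma^{\op}$ with target $\cat{D} = \SS$:
\[ (\gamma^{\op})^* \from \Fun(\cat{C}[W^{-1}]^{\op}, \SS) \to \Fun(\cat{C}^{\op}, \SS) \]
is fully faithful, with essential image those functors sending arrows of $W^{\op}$ to equivalences of $\infty$-groupoids. By definition, $\Fun(\cat{C}[W^{-1}]^{\op}, \SS) = \PSh(\cat{C}[W^{-1}])$ and $\Fun(\cat{C}^{\op}, \SS) = \PSh(\cat{C})$, and the functor $(\gamma^{\op})^*$ agrees with $\gamma^*$ under these identifications. Since a functor $F \from \cat{C}^{\op} \to \SS$ inverts $W^{\op}$ precisely when it inverts $W$ in the sense of the statement, the essential image is as claimed.

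The main obstacle, if there is one, is simply to articulate that the notion of localization used here is preserved by $(-)^{\op}$. This is immediate from the definition because the axioms only refer to functor $\infty$-categories, full-and-faithful functors, and equivalences, each of which is stable under $(-)^{\op}$. Once this duality is recorded, the proposition is a direct specialization of the universal property with $\cat{D} = \SS$, requiring no further computation.
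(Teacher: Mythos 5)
Your proof is correct and follows the same route as the paper's: pass to opposites, observe that $\gamma^{\op}$ is a localization of $(\cat{C}^{\op}, W^{\op})$, and then specialize the defining universal property of localization with target $\cat{D} = \SS$. You simply spell out the self-duality of the localization axioms in more detail than the paper, which states this as a one-line remark.
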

\begin{proof}
  The functor $\gamma^\op \from \cat{C}^\op \to \cat{C}[W^{-1}]^\op$ is a localization of $(\cat{C}^\op, W^\op)$, from which this follows by the definition of a localization.
\end{proof}

Lastly, recall that an $\infty$-category $\cat{I}$ is \emph{weakly contractible} if the localization of the pair $(\cat{I}, \cat{I})$ is a contractible groupoid.
\begin{proposition}[{\cite[Corollary 4.4.4.10]{lurie:htt}}]
  If $\cat{I}$ is weakly contractible and $\mathsf{const}_X \from \cat{I} \to \cat{C}$ is the functor which is constant at an object $X \in \cat{C}$ then the cone diagram  which is constant at $X$ is a homotopy colimit cone for $\mathsf{const}_X$. \qed
\end{proposition}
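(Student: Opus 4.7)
The plan is to detect the colimit cone by passing to representables. By Yoneda, the constant cone from $\mathsf{const}_X$ to the apex $X$ is a homotopy colimit cone in $\cat{C}$ if and only if, for every $Y \in \cat{C}$, the induced map
\[
\cat{C}(X, Y) \longrightarrow \lim_{i \in \cat{I}^\op} \cat{C}(\mathsf{const}_X(i), Y)
\]
is an equivalence in $\SS$. Identifying $\cat{C}(\mathsf{const}_X(i), Y) = \cat{C}(X, Y)$ for every $i$, the right-hand side is the limit over $\cat{I}^\op$ of the constant diagram at $Z := \cat{C}(X, Y) \in \SS$, and the map above is the canonical constant cone.

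So the statement reduces to a dual claim about $\infty$-groupoids: for any weakly contractible $\cat{J}$ and any $Z \in \SS$, the constant cone on $Z$ exhibits $Z$ as $\lim_\cat{J} \mathsf{const}_Z$. For this I would invoke the standard identification $\lim_\cat{J} \mathsf{const}_Z \equiv \SS(|\cat{J}|, Z)$, where $|\cat{J}|$ denotes the $\infty$-groupoidification of $\cat{J}$ (equivalently, its localization at all morphisms). Weak contractibility says precisely that $|\cat{J}| \equiv *$, so $\SS(|\cat{J}|, Z) \equiv Z$, and this equivalence is realized by the constant cone. Applying this with $\cat{J} = \cat{I}^\op$ — which is weakly contractible whenever $\cat{I}$ is, since the opposite of an $\infty$-groupoid is equivalent to itself — closes the argument.

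I anticipate no real obstacle: the proof is a string of standard $\infty$-categorical reductions, and each ingredient (Yoneda detection of colimits, the description of a limit of a constant diagram as a mapping space out of the classifying space, and the definition of weak contractibility) is foundational. The only minor care needed is to track variance when passing to $\cat{I}^\op$, but weak contractibility is insensitive to this. Equivalently, one could observe that weak contractibility of $\cat{I}$ is precisely cofinality of the unique functor $\cat{I} \to *$, so the colimit of $\mathsf{const}_X = X \circ (\cat{I} \to *)$ agrees with the colimit of $X \from * \to \cat{C}$, namely $X$ itself; this is the route taken in \cite[Corollary 4.4.4.10]{lurie:htt}.
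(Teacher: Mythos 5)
Your argument is correct. The paper does not actually prove this proposition — it simply cites \cite[Cor.~4.4.4.10]{lurie:htt} and ends with $\qed$ — so the comparison is really with Lurie's proof, which (as you yourself note at the end) goes via cofinality: weak contractibility of $\cat{I}$ is equivalent to cofinality of $\cat{I}\to *$, whence $\colim_{\cat I}\mathsf{const}_X \simeq \colim_{*}X = X$. Your route instead reduces, by Yoneda detection of colimits, to the dual claim that the limit of a constant diagram over a weakly contractible index category is the object itself, which you handle through the identification $\lim_{\cat J}\mathsf{const}_Z \simeq \SS(|\cat J|, Z)$ and $|\cat J|\simeq *$. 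Both arguments are clean and foundational; the cofinality argument is shorter and works directly at the level of $\cat C$, while your reduction isolates a reusable fact about limits in $\SS$ and makes the role of the classifying space of $\cat I$ explicit. Two minor checks you implicitly rely on hold: the constant cone is indeed what realizes the equivalence $\SS(|\cat I^\op|, Z)\simeq Z$ (it is induced by $|\cat I^\op|\to *$), and weak contractibility is preserved under passing to opposites since the classifying space of $\cat I^\op$ is equivalent to that of $\cat I$.
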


\section{The Derived Mapping Space Lemma} \label{sec:dmsl}

We recall the following phenomenon from the theory of model categories: for a model category $\cat{M}$, one can always form the category of simplicial objects $\Fun(\Delta^\op, \cat{M})$, equipped with the Reedy model structure.
Given an object $Y \in \cat{M}$, taking a fibrant replacement of the constant simplicial object at $Y$ produces a new simplicial object $Y_\bullet \from \Delta^\op \to \cat{M}$, 
known as a \emph{simplicial resolution} of $Y$.
Each object in the resolution $Y_n$ admits a weak equivalence from $Y$, and these maps $ Y \to Y_n$ form a cone over $Y_\bullet$. 
Moreover, for any $X \in \cat{M}$ cofibrant, the simplicial set $\cat{M}(X, Y_\bullet)$ has the homotopy type of the derived mapping space, i.e.\ it computes the mapping space of the localization $\cat{M}[W^{-1}]$.

The \emph{derived mapping space lemma} generalizes this setup, giving sufficient conditions for when the mapping space into a ``resolution'' (before localizing) correctly computes the mapping space after localizing.

Before stating the lemma, we axiomatize the properties we need of a resolution.
\begin{definition} \label{def:I-resolution}
  Let $(\cat{C}, W)$ be a relative $\infty$-category and let $Y$ be an object of $\cat{C}$.
  For a small $\infty$-category $\cat{I}$, an \emph{$\cat{I}$-resolution} of $Y$ is a functor $Y_\bullet \from \cat{I} \to Y \slice \cat{C}$ into the slice under $Y$ such that:
  \begin{enumerate}
    \item[(R1)] $\cat{I}$ is weakly contractible;
    \item[(R2)] for all $i \in \cat{I}$, the structure morphism $Y \to Y_i$ maps to an equivalence in $\cat{C}[W^{-1}]$; 
    \item[(R3)] the colimit presheaf
    \[ \colim\limits_{\cat{I}} \cat{C}(-, Y_{\bullet}) = \colim \big( \cat{I} \xrightarrow{Y_\bullet} Y \slice \cat{C} \xrightarrow{\Pi} \cat{C} \xrightarrow{\yo} \PSh(\cat{C}) \big) \]
    sends weak equivalences to equivalences of $\infty$-groupoids.
  \end{enumerate}
\end{definition}
Let $Y_\bullet$ be an $\cat{I}$-resolution of an object $Y \in \cat{C}$.
For each object $i \in \cat{I}$, let $\phi_i$ denote the composite 
\[
	\cat{C}(-,Y) \to \cat{C}(-,Y_i) \to \colim\limits_{\cat{I}}\cat{C}(-,Y_{\bullet}).
\]
The naturality of the unit $\eta$ of the adjunction $\gamma_!\dashv \gamma^*$ gives us a commutative diagram 

\[ \label{eq:phi-square} \begin{tikzcd}[column sep = 3em, row sep = 2em]
	\cat{C}(-, Y) \ar[r, "\phi_i"] \ar[d, "\eta"'] & \colim\limits_{\cat{I}} \cat{C}(-, Y_{\bullet}) \ar[d, "\eta", "\sim"'] \\
	\gamma^* \gamma_! \cat{C}(-, Y) \ar[r, "{\gamma^* \gamma_! \phi_i}"''] & \gamma^* \gamma_! \colim\limits_{\cat{I}} \cat{C}(-, Y_{\bullet}).
\end{tikzcd} \tag{$\ast$} \]
Notice that the right hand $\eta$ is an equivalence, as the colimit presheaf $\colim\limits_{\cat{I}} \cat{C}(-, Y_{\bullet})$ is in the essential image of the functor $\gamma^*$ by condition (R3).
By \cref{left-adjoint-representables}, we identify $\gamma_! \cat{C}(-, Y)$ as $\cat{C}[W^{-1}](-, Y)$.
With these identification, we shall write $\Phi_i$ for $\gamma^* \gamma_!{\phi_i}$, which we view as a morphism
\[ \Phi_i \from \gamma^* \cat{C}[W^{-1}](-, Y) \to \colim\limits_{\cat{I}} \cat{C}(-, Y_{\bullet}) \]
in $\PSh(\cat{C})$.

\begin{theorem}[Derived mapping space lemma] \label{thm:DMSL}
  Let $(\cat{C}, W)$ be a relative $\infty$-category.
  If $Y_\bullet \from \cat{I} \to Y \slice \cat{C}$ is a resolution of an object $Y \in \cat{C}$ then
  \[ \Phi_i \from \gamma^* \cat{C}[W^{-1}](-, Y) \to \colim\limits_{\cat{I}} \cat{C}(-, Y_{\bullet}) \]
  is an equivalence of presheaves.

\end{theorem}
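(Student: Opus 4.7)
The plan is to reduce the question from $\PSh(\cat{C})$ to $\PSh(\cat{C}[W^{-1}])$ via the fully faithful pullback $\gamma^*$, and then in $\PSh(\cat{C}[W^{-1}])$ to identify both sides with the representable $\cat{C}[W^{-1}](-,Y)$ using conditions (R1) and (R2).

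First, I would observe that both the domain $\gamma^* \cat{C}[W^{-1}](-, Y)$ and the codomain $\colim_\cat{I} \cat{C}(-, Y_\bullet)$ of $\Phi_i$ lie in the essential image of $\gamma^*$: the former by definition, and the latter by condition (R3) together with \cref{gamma-full-faithful}. Since $\gamma^*$ is fully faithful, the counit $\gamma_! \gamma^* \to \mathrm{id}$ is an equivalence, and so $\gamma_!$ reflects equivalences between presheaves lying in its essential image. It therefore suffices to show that $\gamma_! \Phi_i$ is an equivalence in $\PSh(\cat{C}[W^{-1}])$.

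Second, I would compute $\gamma_!$ applied to each side. The domain becomes $\gamma_! \gamma^* \cat{C}[W^{-1}](-, Y) \simeq \cat{C}[W^{-1}](-, Y)$ via the counit. For the codomain, using that $\gamma_!$ preserves colimits (as a left adjoint) together with \cref{left-adjoint-representables}, I obtain
\[
\gamma_! \colim_\cat{I} \cat{C}(-, Y_\bullet) \simeq \colim_\cat{I} \gamma_! \cat{C}(-, Y_\bullet) \simeq \colim_\cat{I} \cat{C}[W^{-1}](-, Y_\bullet).
\]
By condition (R2), each structure map $Y \to Y_i$ becomes an equivalence in $\cat{C}[W^{-1}]$, so the canonical natural transformation from the constant $\cat{I}$-diagram at $\cat{C}[W^{-1}](-, Y)$ into $\cat{C}[W^{-1}](-, Y_\bullet)$ is a pointwise equivalence of diagrams in $\PSh(\cat{C}[W^{-1}])$. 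Combined with (R1) and the cited result that colimits of constant diagrams over weakly contractible categories return the constant value, this yields $\colim_\cat{I} \cat{C}[W^{-1}](-, Y_\bullet) \simeq \cat{C}[W^{-1}](-, Y)$.

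Finally — and this I expect to be the main technical obstacle — I must verify that the composite equivalences identify $\gamma_! \Phi_i$ with a genuine equivalence, and not merely exhibit the source and target as abstractly equivalent. By the Yoneda lemma in $\PSh(\cat{C}[W^{-1}])$, this reduces to a check on $Y$: tracing through the definition of $\Phi_i$ as arising from the cone $\phi_i$ via the unit $\eta$ in the square (\ref{eq:phi-square}), the map $\gamma_! \Phi_i$ factors under the above identifications as the structure map $\cat{C}[W^{-1}](-, Y) \to \cat{C}[W^{-1}](-, Y_i)$ (an equivalence by (R2)) followed by its inverse coming from the colimit identification, hence is an equivalence. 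The bookkeeping required to verify this compatibility of identifications is the most subtle part of the argument.
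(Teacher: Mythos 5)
Your proposal is correct and takes essentially the same approach as the paper: both arguments come down to showing $\gamma_!\phi_i$ is an equivalence, using (R1) to compute the constant colimit, (R2) together with \cref{left-adjoint-representables} to see the induced map of colimits in $\PSh(\cat{C}[W^{-1}])$ is an equivalence, and (R3) to place the codomain in the essential image of $\gamma^*$ (the paper uses this already in the setup that defines $\Phi_i$, whereas you invoke it to argue $\gamma_!$ reflects equivalences). One small imprecision in your last paragraph: the colimit leg $\cat{C}[W^{-1}](-,Y_i)\to\colim_\cat{I}\cat{C}[W^{-1}](-,Y_\bullet)$ is not literally the inverse of the structure map, though it is indeed an equivalence by (R1)+(R2); the paper avoids this bookkeeping by factoring $\phi_i$ through $\colim_\cat{I}\cat{C}(-,Y)$ in $\PSh(\cat{C})$ \emph{before} applying $\gamma_!$, so that a single explicitly commutative diagram carries the argument.
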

The derived mapping space lemma \cref{thm:DMSL} was inspired by, and generalizes, Cisinski's theorem on calculus of fractions \cite[Theorem 7.2.8]{cisinski:higher-categories}.
We explain this in detail in \cref{rmk:relation-to-cisinski}.
\begin{proof}
	The structure morphisms $Y \to Y_i$ induce a morphism $\colim\limits_{\cat{I}} \cat{C}(-, Y) \to \colim\limits_{\cat{I}} \cat{C}(-, Y_{\bullet})$ in $\PSh(\cat{C})$, where $\colim\limits_{\cat{I}} \cat{C}(-, Y)$ denotes the colimit of the constant diagram $\cat{I}\to \PSh(\cat{C})$ at $\cat{C}(-,Y)$. 
  As $\cat{I}$ is weakly contractible, the colimit cone for $\colim\limits_{\cat{I}} \cat{C}(-, Y)$ is the constant cone.
  Thus, the left map in the commutative square
  \[ \begin{tikzcd}
	  \cat{C}(-, Y) \ar[r] \ar[d, "\sim"'] \ar[rd, "\phi_i"{description}] & \cat{C}(-, Y_i) \ar[d] \\
    \colim\limits_{\cat{I}} \cat{C}(-, Y) \ar[r] & \colim\limits_{\cat{I}} \cat{C}(-, Y_{\bullet})
  \end{tikzcd} \]
  is an equivalence.
  Applying $\gamma_!$ to this square and using the fact that $\gamma_!$ preserves colimits, we obtain the following diagram:
  \[ \begin{tikzcd}
	  \gamma_! \cat{C}(-, Y) \ar[r] \ar[d, "\sim"'] \ar[rd, "\gamma_!\phi_i"{description}] & \gamma_! \cat{C}(-, Y_i) \ar[d] \\
    \gamma_! \colim\limits_{\cat{I}} \cat{C}(-, Y) \ar[r] \ar[d, "\sim"'] & \gamma_! \colim\limits_{\cat{I}} \cat{C}(-, Y_\bullet) \ar[d, "\sim"] \\
    \colim\limits_{\cat{I}} \gamma_! \cat{C}(-, Y) \ar[r, "\alpha"'] & \colim\limits_{\cat{I}} \gamma_! \cat{C}(-, Y_{\bullet}).
  \end{tikzcd} \]
  By \cref{left-adjoint-representables}, the map $\alpha$ at the bottom is naturally equivalent to the morphism
  \[ \colim\limits_{\cat{I}} \cat{C}[W^{-1}](-, Y) \to \colim\limits_{\cat{I}} \cat{C}[W^{-1}](-, Y_{\bullet}) \] 
  induced by the morphisms $Y \to Y_i$ in $\cat{C}[W^{-1}]$.
  Since these morphisms are equivalences in $\cat{C}[W^{-1}]$, so must be $\alpha$. Hence $\gamma_!\phi_i$ is also an equivalence, from which we deduce that $\Phi_i \simeq \gamma^*\gamma_!\phi_i$ is an equivalence. 
\end{proof}
  In the context of $\infty$-categories, a \emph{resolution} of an object $Y \in \cat{C}$ may also refer to any diagram $\cat{I} \to \cat{C}$ whose colimit is $Y$.
  Under this usage, it is immediate from condition (R2) of \cref{def:I-resolution} that $Y_\bullet$ is a resolution of $Y$ in $\cat{C}[W^{-1}]$.
  \Cref{thm:DMSL} establishes a stronger result: that $\cat{C}(-, Y_\bullet)$ is a resolution of the ``derived representable'' presheaf $\gamma^* \cat{C}[W^{-1}](-, Y) \in \PSh(\cat{C})$.

\begin{remark}
 The $\infty$-category $\cat{I}$ is a connected category by (R1), hence the maps $\phi_i$ and $\Phi_i$ are all homotopic to one another.
 However, (R1) ensures something stronger: they are homotopic via \emph{essentially unique} homotopies.
 To see this, note that the maps $\phi_i$ form a natural transformation between two constant diagrams, which is exactly the data of a functor $\cat{I}\to \PSh(\cat{C})\big( \cat{C}(-,Y) , \colim\limits_{\cat{I}} \cat{C}(-,Y_\bullet) \big)$ into the mapping $\infty$-groupoid.
 As all morphisms in the target are invertible, this functor factors through the localization $\cat{I}[\cat{I}^{-1}]$.
 Condition (R1) says that this localization is the terminal $\infty$-groupoid, so all the maps $\phi_i$ are homotopic by canonical homotopies.
\end{remark}

\begin{remark} \label{rmk:relation-to-cisinski}
 \cref{thm:DMSL} is a generalization of \cite[Theorem 7.2.8]{cisinski:higher-categories} on calculus of fractions.
 More precisely, Cisinski calls a diagram $F\from\cat{A}\to \cat{C}$ a \emph{left calculus of fractions} at $Y\in \cat{C}$ if $\cat{A}$ has an initial object $\emptyset\in \cat{A}$ mapping to $Y$, and if the induced functor $\widetilde{F}\from \cat{A}\simeq \emptyset \downarrow \cat{A}\to Y\comma \cat{C}$ is a resolution in the sense of \cref{def:I-resolution}.
 His theorem, which is proven by a different technique than ours, asserts that if $F$ is a right calculus of fractions, then one has an equivalence of presheaves $\gamma^*\cat{C}[W^{-1}](-,Y)\simeq \colim\limits_{a\in \cat{A}}\cat{C}(-,F(a))$. 
 (In the broader context of \cite[Ch.~7.2]{cisinski:higher-categories}, which alludes to the work of Gabriel and Zisman \cite{gabriel-zisman},  $\cat{A}$ is taken to be a subcategory of $Y\downarrow \cat{C}$, thus explaining the use of the term ``fraction''.)

 In fact, it is also possible to derive \cref{thm:DMSL} directly from \cite[Thm.~7.2.8]{cisinski:higher-categories}. Indeed, any $\cat{I}$-resolution $\cat{I}\to Y\downarrow \cat{C}$ determines a left calculus of fractions $\{\emptyset\}\star\cat{I}\to \cat{C}$, since $\cat{I}\to \{\emptyset\}\star \cat{I}$ is final by (R1).
	However, this derivation is artificial from the ``resolution'' viewpoint (for instance, to accommodate simplicial resolutions to the setting of fractions, we would have to use the unusual indexing category $\cat{A} = \{\emptyset\}\star \Delta^\op$), nor does it give a conceptual explanation for \cref{thm:DMSL}. 
\end{remark}

\section{Consequences} \label{sec:consequences}

For our first consequence of the derived mapping space lemma, we verify that it generalizes the case of simplicial frames in a model category.
Given a model category $\cat{M}$, we write $\cat{M}^{\mathrm{c}}$ for the full subcategory consisting of cofibrant objects.
\begin{corollary} \label{cor:frames}
  Let $\cat{M}$ be a model category, and let $Y_{\bullet}$ be a simplicial
  resolution of an object $Y \in \cat{M}$.
  For every cofibrant object $X \in \cat{M}^{\mathrm{c}}$, there is an equivalence
  \[ \cat{M}[W^{-1}](X, Y) \simeq \cat{M}(X, Y_\bullet) \]
  natural in $X \in \cat{M}^{\mathrm{c}}$.
\end{corollary}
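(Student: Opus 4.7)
The plan is to invoke the Derived Mapping Space Lemma (\cref{thm:DMSL}) with $\cat{I} = \Delta^{\op}$ and the relative $\infty$-category $(\cat{M}^{\mathrm{c}}, W \cap \cat{M}^{\mathrm{c}})$. I would first use two classical facts to normalise the setup: (a) the inclusion $\cat{M}^{\mathrm{c}} \hookrightarrow \cat{M}$ induces an equivalence on $\infty$-categorical localizations $\cat{M}^{\mathrm{c}}[W^{-1}] \simeq \cat{M}[W^{-1}]$, so replacing $Y$ by a cofibrant approximation does not alter $\cat{M}[W^{-1}](X, Y)$ for cofibrant $X$; and (b) the homotopy type of the simplicial set $\cat{M}(X, Y_\bullet)$ is independent (up to weak equivalence) of the choice of simplicial resolution of $Y$. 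This freedom lets me assume without loss of generality that $Y$ is cofibrant and that $Y_\bullet$ is Reedy cofibrant-fibrant (i.e.\ a simplicial frame), so that every $Y_n$ lies in $\cat{M}^{\mathrm{c}}$ and $Y_\bullet$ genuinely defines a functor $\Delta^{\op} \to Y \slice \cat{M}^{\mathrm{c}}$.

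Next I would verify the three axioms of a $\Delta^{\op}$-resolution. Axiom (R1) is the classical weak contractibility of $\Delta^{\op}$. Axiom (R2) is the defining property of a simplicial resolution: each structure map $Y \to Y_n$ is a weak equivalence. For axiom (R3), I would note that because colimits in $\PSh(\cat{M}^{\mathrm{c}})$ are computed pointwise and each $\cat{M}^{\mathrm{c}}(-, Y_n)$ is valued in discrete $\infty$-groupoids, the value at $X$ of $\colim_{\Delta^{\op}} \cat{M}^{\mathrm{c}}(-, Y_\bullet)$ is naturally identified with the geometric realization of the simplicial set $\cat{M}(X, Y_\bullet)$, viewed as an $\infty$-groupoid. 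Homotopy invariance of this assignment under a weak equivalence $X \to X'$ of cofibrant objects is the classical Reedy-model-categorical content of simplicial frames, a routine consequence of Ken Brown's lemma applied to the Reedy model structure on $\cat{M}^{\Delta^{\op}}$.

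Applying \cref{thm:DMSL} and evaluating at $X \in \cat{M}^{\mathrm{c}}$ then yields the chain of equivalences
\[ \cat{M}[W^{-1}](X, Y) \;\simeq\; \cat{M}^{\mathrm{c}}[W^{-1}](X, Y) \;\simeq\; \cat{M}(X, Y_\bullet), \]
natural in $X$. The main obstacle is verifying (R3): specifically, one must reconcile the $\infty$-categorical colimit of a simplicial diagram of discrete $\infty$-groupoids with the classical geometric realization of the simplicial set of $1$-categorical hom-sets, and then import from Reedy model category theory the fact that the resulting presheaf sends weak equivalences of cofibrant objects to equivalences of $\infty$-groupoids. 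These ingredients are individually standard, but assembling them into the precise form demanded by \cref{thm:DMSL} is where the actual work lies.
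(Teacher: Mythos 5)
Your proposal follows essentially the same route as the paper: reduce to $Y$ cofibrant and $Y_\bullet$ a Reedy cofibrant--fibrant frame, pass to $\cat{M}^{\mathrm{c}}[W^{-1}] \simeq \cat{M}[W^{-1}]$, identify the colimit of the discrete hom-presheaves with the geometric realization of $\cat{M}(X, Y_\bullet)$, and verify (R1)--(R3) for the $\Delta^{\op}$-diagram before applying \cref{thm:DMSL}; the paper handles (R3) and the reductions by citing Hirschhorn (Cor.~16.5.5.(3), Thm.~16.5.2.(2), Cor.~18.7.7) where you appeal to Ken Brown's lemma and standard Reedy theory, which is the same content. The argument is correct and complete in the same way the paper's is.
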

\begin{proof}
  Without loss of generality, we may assume $Y_\bullet$ is Reedy cofibrant (and fibrant).
Indeed, the non-cofibrant case follows from \cite[Thm.~16.5.2.(2)]{Hirschhorn}, which says that a Reedy cofibrant replacement $\tilde{Y_\bullet} \to Y_\bullet$ induces an equivalence (in fact, an acyclic fibration) of Kan complexes
  \[ \cat{M}(X, \tilde{Y_\bullet}) \xrightarrow{\sim} \cat{M}(C, Y_\bullet). \]
  We may also assume $Y$ is cofibrant, since every object is equivalent to a cofibrant object in the localization $\cat{M}[W^{-1}]$.

  By \cite[Thm.~7.5.18]{cisinski:higher-categories}, the induced functor $\cat{M}^{\mathrm{c}}[W^{-1}] \to \cat{M}[W^{-1}]$ is an equivalence, from which we deduce an equivalence
  \[ \cat{M}[W^{-1}](X, Y) \simeq \cat{M}^{\mathrm{c}}[W^{-1}](X, Y).  \]
  Lastly, recall (e.g. from \cite[Cor.~18.7.7]{Hirschhorn}) that every simplicial set $K$ is its own geometric realization, i.e.\
  \[ \colim \big( \Delta^\op \xrightarrow{K} \Set \ito \SS \big) \simeq K. \]
  Combining our reductions, the desired equivalence may be re-written as
  \[ \cat{M}^{\mathrm{c}}[W^{-1}](X, Y) \simeq \colim\limits_{[n] \in \Delta^\op} \cat{M}^{\mathrm{c}}(X, Y_n). \]
  To apply \cref{thm:DMSL}, we need only verify that $Y_\bullet \from \Delta^\op \to Y \slice \cat{M}^{\mathrm{c}}$ is a resolution of $Y$ in the sense of \cref{def:I-resolution}:
  \begin{itemize}
    \item[(R1)] $\Delta^\op$ is weakly contractible since it has an initial object $[0]$;
    \item[(R2)] the maps $Y \to Y_n$ are weak equivalence by the assumption that $Y_\bullet$ is a Reedy fibrant replacement of $Y$;
    \item[(R3)] if $X' \to X$ is a weak equivalence of cofibrant objects then the map $\cat{M}^{\mathrm{c}}(X, Y_\bullet ) \to \cat{M}^{\mathrm{c}}(X', Y_\bullet )$ is an equivalence by \cite[Cor.~16.5.5.(3)]{Hirschhorn}. \qedhere
  \end{itemize}
\end{proof}

A more significant consequence of the derived mapping space lemma is that it allows us to give an easy-to-verify sufficient condition for when a cubical (or simplicial) category is the localization of its underlying category at its \emph{homotopy equivalences}.

We acknowledge here that while many aspects of the theory of cubical categories might be known to experts, they are not well documented (or documented at all!) in the existing literature.
For this reason, we collect all the necessary statements in \cref{sec:cubical-categories}, which in turn relies on \cref{sec:cubical-sets} on cubical sets.
The key results are: the existence of the Bergner model structure on cubical categories (\cref{model-str-ccat}), its Quillen equivalence to the Joyal model structure on simplicial sets (\cref{str-cohnerve-quillen-equiv}) via the cubical homotopy coherent nerve of \cite{kapulkin-voevodsky}, and the fact that the mapping spaces of the homotopy coherent nerve of a locally Kan cubical category agree via triangulation with the mapping spaces of the cubical category (\cref{repr-ex-infty-t-commute}).

Prior to reading the remainder of the section, the reader unfamiliar with cubical sets should consult \cref{sec:cubical-sets,sec:cubical-categories} for the necessary background.
Readers familiar with that theory can simply treat the three results listed above (\cref{model-str-ccat,str-cohnerve-quillen-equiv,repr-ex-infty-t-commute}) as a black box and consult \cref{sec:cubical-categories} if necessary.

Returning to the question of defining the underlying category of a cubical category, by \emph{homotopy equivalence} in a cubical category, we mean a morphism which admits an inverse ``up to homotopy'', i.e.\ up to some path in the mapping space.
\begin{definition} \label{def:hpty-equiv}
   Let $\ccat{C}$ be a cubical category.
  A morphism $f \from X \to Y$ in $\ccat{C}$ is a \emph{homotopy equivalence} if there exists
  \begin{enumerate}
    \item a morphism $g \from Y \to X$;
    \item a zig-zag of 1-cubes in $\ccat{C}(X, X)$ from $\id[X]$ to $gf$; and
    \item a zig-zag of 1-cubes in $\ccat{C}(Y, Y)$ from $\id[Y]$ to $fg$.
  \end{enumerate}
\end{definition}
The following are straightforward consequences of \cref{def:hpty-equiv}.
\begin{itemize}
  \item If $f \from X \to Y$ and $g \from Y \to Z$ are homotopy equivalences then $gf \from X \to Z$ is as well.
  \item If $H$ is a 1-cube in any mapping space $\ccat{C}(X, Y)$ from a morphism $f$ to a morphism $g$ then we have homotopies
  \[ \begin{tikzcd}[cramped, column sep = 1.6em]
    \ccat{C}(W, X) \ar[r, "{\face{}{1, 0} \otimes \id}"] \ar[rdd, "{f_*}", swap, bend right] & \cube{1} \otimes \ccat{C}(W, X) \ar[d, "{H \otimes \id}"] & \ar[l, "{\face{}{1, 1} \otimes \id}", swap] \ar[ldd, "{g_*}", bend left] \ccat{C}(W, X) \\
    {} & \ccat{C}(X, Y) \otimes \ccat{C}(W, X) \ar[d, "\circ"] & {} \\
    {} & \ccat{C}(W, Y) & {}
  \end{tikzcd} \qquad \begin{tikzcd}[cramped, column sep = 1.8em]
    \ccat{C}(Y, Z) \ar[r, "{\id \otimes \face{}{1, 0}}"] \ar[rdd, "{f^*}", swap, bend right] & \ccat{C}(Y, Z) \otimes \cube{1} \ar[d, "{\id \otimes H}"] & \ar[l, "{\face{}{1, 1} \otimes \id}", swap] \ar[ldd, "{g^*}", bend left] \ccat{C}(Y, Z) \\
    {} & \ccat{C}(Y, Z) \otimes \ccat{C}(X, Y) \ar[d, "\circ"] & {} \\
    {} & \ccat{C}(X, Z) & {}
  \end{tikzcd} \]
  between the postcomposition maps $f_* \sim g_*$ and the precomposition maps $f^* \sim g^*$.
  It follows that if $f$ is a homotopy equivalence then $f^*$ and $f_*$ are homotopy equivalences.

  \item If $F \from \ccat{C} \to \ccat{D}$ is a DK-equivalence then a morphism $f$ is a homotopy equivalence if and only if $Ff$ is.
  \item If $\ccat{C}$ is locally Kan then $\id[X]$ and $gf$ are connected by a 1-cube in $\ccat{C}(X, X)$ in both directions (and likewise for $\id[Y]$ and $fg$).
  \item If $\ccat{C}$ is locally Kan then $f$ is a homotopy equivalence if and only if it becomes an equivalence in $\cohnerve \ccat{C}$; this follows from the previous fact.
  \item A morphism $f$ is a homotopy equivalence if and only if it becomes an isomorphism in the homotopy category.
\end{itemize}

In practice, a notion of ``homotopy'' for morphisms in a category $\cat{C}$ often comes from a cubical or simplicial enrichment.
For instance, several convenient categories of topological spaces admit a simplicial enrichment where the 1-simplices are precisely the ordinary homotopies between continuous functions.
Another example, which we address later in the paper, comes from the field of \emph{discrete homotopy theory}, where a notion of ``discrete homotopy'' between maps of graphs may be obtained via cubical enrichment.
In such settings, it is natural to suspect that the mapping spaces coming from the enrichment are ``homotopically correct'', meaning they compute the mapping spaces of $\cat{C}$, localized at these homotopy equivalences.
(This is true for the standard simplicial enrichments of spaces).

In contrast, it is not in general true that every cubical/simplicial category is the localization of its underlying category at homotopy equivalences. 
For example, consider a cubical/simplicial group $X$ with a non-trivial homotopy group $\pi_n$ in dimension $n \geq 1$ (e.g.\ $X$ is the singular complex of the circle $S^1$).
When viewing $X$ as a one-object cubical/simplicial category $BX$, its underlying category is the one-object groupoid $B(X_0)$, and every morphism is a homotopy equivalence (since every morphism is an isomorphism).
However, the localization of $B(X_0)$ is the Eilenberg-MacLane space $K(X_0, 1)$, thus the mapping spaces $BX(\ast, \ast) = X$ and $B(X_0)[W^{-1}](\ast, \ast) = X_0$ fail to be homotopy-equivalent.

We remark that if $\ccat{C}$ is a cubical/simplicial category then $\ccat{C}_0[W^{-1}]$ depends only on the 1-skeleton of the mapping spaces of $\ccat{C}$.
Thus, another class of counterexamples comes from altering the mapping spaces of $\ccat{C}$ in any way that preserves the 1-skeleta, but which changes the homotopy type.

To give a sufficient condition for when $\ccat{C}$ correctly computes the localization at homotopy equivalences, we will need the notion of \emph{weak (co)tensors}.
\begin{definition}
  Let $(\cat{V}, \otimes)$ be a right-closed monoidal category.
  and write $\underline{\cat{V}}(A, -)$ for the right adjoint to the functor $- \otimes A \from \cat{V} \to \cat{V}$.
  Suppose $\ecat{C}$ is a $\cat{V}$-category.
  Given objects $A \in \cat{V}$ and $X \in \ecat{C}$,
  \begin{enumerate}
    \item a \emph{weak tensor} of $X$ by $A$ is an object $A \otimes X \in \ccat{C}$ together with an isomorphism of $\cat{V}$-objects
    \[ \ecat{C}(A \otimes X, Y) \cong  \underline{\cat{V}} \big( A, \ecat{C}(X, Y) \big) \]
    which is unenriched natural in $Y \in \ecat{C}_0$.
    \item a \emph{weak cotensor} of $X$ by $A$ is an object $A \pitchfork X \in \ccat{C}$ together with an isomorphism of $\cat{V}$-objects
    \[ \ecat{C}(Z, A \pitchfork X) \cong \underline{\cat{V}} \big( A, \ecat{C}(Z, X) \big) \]
    which is unenriched natural in $Z \in \ecat{C}_0^\op$.
  \end{enumerate}
\end{definition}
We say that $\ecat{C}$ \emph{admits weak tensors} (analogously, $\ecat{C}$ \emph{admits weak cotensors}) by an object $A \in \cat{V}$ if, for every object of $\ecat{C}$, there exists a weak tensor (analogously, a weak cotensor) of $X$ by $A$.

The following examples highlight various ways of constructing weak (co)tensors.
\begin{example}
  Any $\cat{V}$-category admits weak (co)tensors by the monoidal unit $I$ by setting $I \otimes X = X = I \pitchfork X$.
  The naturality condition ensures that weak (co)tensors are unique up to an isomorphism in $\cat{C}_0$ if they exist, so in practice, we assume $I \otimes X = X$ and $I \pitchfork X = X$ for convenience.
\end{example}
\begin{example} \label{ex:self-enriched-tensor}
  If $(\cat{V}, \otimes)$ is right-closed then $\cat{V}$ itself becomes a $\cat{V}$-category.
  In this case, $\cat{V}$ admits all weak tensors, and they are given by left multiplication $A \otimes X$.
  If an object $A \in \cat{V}$ admits natural isomorphisms $A \otimes X \cong X \otimes A$ for all $X$ then $\cat{V}$ admits weak cotensors by $A$, and they are given by the right-closed structure $\underline{\cat{V}}(A, X)$.
\end{example}
\begin{example} \label{ex:change-of-base-tensor}
  Suppose $\adjunct{L}{R}{\cat{V}}{\cat{W}}$ is an adjunction between monoidal categories where the left adjoint $L$ is strong monoidal.
  For a $\cat{W}$-category $\ecat{D}$, let $R_\bullet \ecat{D}$ denote the ``change-of-base'' of $\ecat{D}$ to a $\cat{V}$-category.
  Given $A \in \cat{V}$, if $X$ admits a weak tensor by $LA$ in $\ecat{D}$ then $LA \otimes X$ is a weak tensor of $X$ by $A$ in $R_\bullet \ecat{D}$.
  The analogous statment holds for the weak cotensor $LA \pitchfork X$.
\end{example}
\begin{example} \label{ex:tensor-of-product}
  If $X$ admits a weak tensor by $B$ and $B \otimes X$ admits a weak tensor by $A$ then $A \otimes (B \otimes X)$ is a weak tensor of $X$ by $A \otimes B$.
  Analogously, if $X$ admits a weak cotensor by $B$ and $B \pitchfork X$ admits a weak cotensor by $A$ then $A \pitchfork (B \pitchfork X)$ is a weak cotensor of $X$ by $A \otimes B$.
\end{example}
The motivation for studying weak (co)tensors as opposed to the standard notion of (co)tensors in enriched category theory is that unenriched naturality is well-defined even if the base of enrichment $(\cat{V}, \otimes)$ is \emph{not} symmetric (nor braided).
In particular, we wish to apply our results to the case of cubical sets with the geometric product, which is neither symmetric nor braided.

Our goal is to prove the following.
\begin{theorem} \label{cubical-cat-is-localization}
  Let $\ccat{C}$ be a cubical category which admits weak tensors or weak cotensors by $\cube{1}$.
  Then, for any fibrant replacement $\ccat{C} \to \ccat{C}^\mathrm{f}$ of $\ccat{C}$ in the cubical Bergner model structure, the map
  \[ \ccat{C}_0 \ito \cohnerve \ccat{C} \to \cohnerve \ccat{C}^{\mathrm{f}} \]
  is a localization of $\ccat{C}_0$ at homotopy equivalences. 
\end{theorem}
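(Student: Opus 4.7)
The plan is to apply the Derived Mapping Space Lemma (\cref{thm:DMSL}) to a resolution built from the weak (co)tensor structure. I focus on the case where $\ccat{C}$ admits weak cotensors by $\cube{1}$; the weak tensor case will follow by applying the cotensor argument to $\ccat{C}^{\op}$, using that weak tensors in $\ccat{C}$ correspond to weak cotensors in $\ccat{C}^{\op}$ and that both $\cohnerve$ and the cubical Bergner model structure are compatible with $(-)^{\op}$.

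Iterating \cref{ex:tensor-of-product} yields a weak cotensor $\cube{n} \pitchfork Y$ for every $n \geq 0$, and unenriched naturality of the defining isomorphism $\ccat{C}(Z, \cube{n} \pitchfork Y) \cong \underline{\cSet}(\cube{n}, \ccat{C}(Z, Y))$ promotes $n \mapsto \cube{n} \pitchfork Y$ to a functor $\boxcat^{\op} \to \ccat{C}_0$. The unique projection $\cube{n} \to \cube{0}$ furnishes a structure map $Y = \cube{0} \pitchfork Y \to \cube{n} \pitchfork Y$, and these assemble into a functor
\[ Y_\bullet : \boxcat^{\op} \to Y \slice \ccat{C}_0. \]
I claim $Y_\bullet$ is an $\cat{I}$-resolution with $\cat{I} = \boxcat^{\op}$. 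For (R1), $\cube{0}$ is initial in $\boxcat^{\op}$, which is therefore weakly contractible. For (R2), the structure map $Y \to \cube{n} \pitchfork Y$ admits a retraction induced by any vertex $v : \cube{0} \to \cube{n}$, and the opposite composite is homotopic to $\id[\cube{n} \pitchfork Y]$ via the cotensor-transfer of a 1-cube in $\underline{\cSet}(\cube{n}, \cube{n})$ from $\id[\cube{n}]$ to the constant map at $v$ (which exists thanks to the connections on $\cube{n}$); hence the structure map is a homotopy equivalence in $\ccat{C}_0$. For (R3), the weak cotensor isomorphism identifies $\ccat{C}_0(X, \cube{n} \pitchfork Y) \cong \ccat{C}(X, Y)_n$ naturally in $X$, so the colimit presheaf is pointwise the cubical realization $|\ccat{C}(X, Y)|$ — the cubical analogue of \cite[Cor.~18.7.7]{Hirschhorn} used in the proof of \cref{cor:frames} — and any homotopy equivalence $X' \to X$ in $\ccat{C}_0$ induces a homotopy equivalence of cubical mapping spaces $\ccat{C}(X, Y) \to \ccat{C}(X', Y)$ by the itemized observations preceding the theorem, hence an equivalence of realizations.

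Applying \cref{thm:DMSL} gives $\gamma^* \ccat{C}_0[W^{-1}](X, Y) \simeq |\ccat{C}(X, Y)|$. Since $\ccat{C} \to \ccat{C}^f$ is a DK-equivalence, each $\ccat{C}(X, Y) \to \ccat{C}^f(X, Y)$ is a weak equivalence of cubical sets, so $|\ccat{C}(X, Y)| \simeq |\ccat{C}^f(X, Y)| \simeq \ccat{C}^f(X, Y)$ as $\ccat{C}^f(X, Y)$ is Kan; and by \cref{repr-ex-infty-t-commute} this agrees with $\cohnerve \ccat{C}^f(X, Y)$. Thus the comparison $\ccat{C}_0[W^{-1}](X, Y) \to \cohnerve \ccat{C}^f(X, Y)$ is an equivalence for every pair $X, Y$. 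Since $\gamma$ is bijective on objects and inverts homotopy equivalences (a homotopy equivalence in $\ccat{C}_0$ remains one in the locally Kan $\ccat{C}^f$, hence becomes an equivalence in $\cohnerve \ccat{C}^f$), this shows $\gamma$ is a localization.

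The main obstacle I expect is the cubical co-Yoneda identification $\colim_{\boxcat^{\op}} K \simeq |K|$ in $\SS$ that underlies (R3), together with the naturality bookkeeping needed to show that the chain of equivalences in the preceding paragraph is genuinely the map induced by $\gamma$. A secondary technical issue is the $(-)^{\op}$-compatibility used to reduce the weak tensor case; this should be a direct consequence of the constructions in \cref{sec:cubical-categories}.
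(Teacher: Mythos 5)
Your overall strategy matches the paper's: build a $\boxcat^\op$-indexed resolution of $Y$ from the weak cotensors, verify (R1)--(R3), invoke \cref{thm:DMSL}, and identify the resulting colimit with $\ccat{C}^{\mathrm{f}}(X,Y)\simeq \cohnerve \ccat{C}^{\mathrm{f}}(X,Y)$ via \cref{cubical-set-geometric-realization} and \cref{repr-ex-infty-t-commute}. Your (R2) argument (contract $\cube{n}$ onto a vertex using connections and transfer through the cotensor isomorphism) is a legitimate, slightly more direct alternative to the paper's \cref{tensor-cubical-htpy-equiv}; both use naturality of the cotensor isomorphism in $A$, i.e.\ \cref{tensors-are-functorial}. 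The ``cubical co-Yoneda'' $\colim_{\boxcat^\op} K \simeq K$ that you flag as the main obstacle is exactly \cref{cubical-set-geometric-realization}, so that is not a problem.

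There are, however, two real issues. First, your reduction of the weak tensor case ``by applying the cotensor argument to $\ccat{C}^{\op}$'' does not work as written: since the geometric product on cubical sets is not symmetric, the opposite of a cubical category is \emph{not} well-defined (the paper points this out explicitly in the discussion preceding \cref{repr-ex-infty-t-commute}), so there is no cubical category $\ccat{C}^{\op}$ in which weak tensors could become weak cotensors, nor a sense in which $\cohnerve$ and the cubical Bergner model structure would be ``compatible with $(-)^{\op}$.'' The correct dualization happens one level down: apply \cref{thm:DMSL} to the relative $\infty$-category $(\ccat{C}_0^{\op}, W^{\op})$, resolving the \emph{domain} $X$ by the diagram $\cube{\bullet}\otimes X \from \boxcat \to \ccat{C}_0/X$ built from the weak tensors of $\ccat{C}$ itself; the colimit copresheaf is then $\colim_n \ccat{C}_0(\cube{n}\otimes X, Y) \cong \colim_n \ccat{C}(X,Y)_n \simeq \ccat{C}(X,Y)$, and the rest goes through. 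Second, the point you yourself flag --- showing that the chain of equivalences is the map actually induced by $\gamma$ --- is not optional bookkeeping but the substance of the final step. The paper handles it by forming the triangle with $\alpha \from \ccat{C}_0(-,Y) \to \ccat{C}_0[W^{-1}](-,Y)$ and $\beta \from \ccat{C}_0(-,Y) \to \cohnerve\ccat{C}^{\mathrm{f}}(-,Y)$, identifying $\alpha$ with the unit of $\gamma_! \adj \gamma^*$ and $\beta$ (under the equivalence $\cohnerve \ccat{C}^{\mathrm{f}}(-,Y) \simeq \colim_n \ccat{C}_0(-,\cube{n}\pitchfork Y)$) with $\phi_{[1]^0}$, so that the induced comparison is forced to be $\Phi_{[1]^0}$, which is an equivalence by \cref{thm:DMSL}. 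Without that identification, one has only that the two sides are abstractly equivalent, not that $\gamma$ exhibits $\cohnerve \ccat{C}^{\mathrm{f}}$ as the localization.
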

Before proving \cref{cubical-cat-is-localization}, we develop some auxiliary results on cubical sets and cubical weak (co)tensors.

A consequence of the Yoneda lemma is that weak (co)tensors are automatically (unenriched) functorial in the variable $A$.
Given a morphism $f \from A \to B$ in $\cat{V}$, if $X$ admits weak tensors by both $A$ and $B$ then we obtain a dotted arrow filling the square
\[ \begin{tikzcd}
	\cat{C}_0(X \otimes B, Y) \ar[r, "\cong"] \ar[d, dotted] & {\underline{\cat{V}}} \big( B, \cat{C}(X, Y) \big) \ar[d, "f^*"] \\
	\cat{C}_0(X \otimes A, Y) \ar[r, "\cong"] & {\underline{\cat{V}}}\big( A, \cat{C}(X, Y) \big)
\end{tikzcd} \]
By the Yoneda lemma, the dotted arrow is given by pre-composition with a morphism $X \otimes A \to X \otimes B$ in $\cat{C}_0$, which we denote by $X \otimes f$.
An analogous argument can be used to construct a morphism $f \pitchfork X \from B \pitchfork X \to A \pitchfork X$.
The uniqueness of the dotted arrow gives compatibility with composition and identities, which proves the following:
\begin{proposition} \label{tensors-are-functorial}
  Let $\cat{A} \subseteq \cat{V}$ be any subcategory.
  \begin{enumerate}
    \item If $X$ admits tensors by all objects of $\cat{A}$, then the assignments $A \mapsto X \otimes A$ and $f \mapsto X \otimes f$ form a functor
    \[ X \otimes - \from \cat{A} \to \cat{C}_0. \]
    Moreover, the isomorphism 
    $ \cat{C}(A \otimes X, Y) \cong  \underline{\cat{V}} \big( A, \cat{C}(X, Y) \big)$
    is natural in $A \in \cat{A}^\op$ and $Y \in \cat{C}_0$.
    \item If $X$ admits cotensors by all objects of $\cat{A}$, then the assignments $A \mapsto A \pitchfork X$ and $f \mapsto f \pitchfork X$ form a functor
    \[ - \pitchfork X \from \cat{A}^\op \to \cat{C}_0. \]
    Moreover, the isomorphism
    $ \cat{C}(Z, A \pitchfork X) \cong \underline{\cat{V}} \big( A, \cat{C}(Z, X) \big) $
    is natural in $A \in \cat{A}^\op$ and $Z \in \cat{C}_0^\op$. \qed
  \end{enumerate}
\end{proposition}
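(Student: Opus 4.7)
The plan is to deduce both statements formally from the Yoneda lemma, piggy-backing on the construction of $X \otimes f$ sketched in the paragraph immediately preceding the proposition. Given $f \from A \to B$ in $\cat{A}$, precomposition with $f$ yields a morphism $f^* \from \underline{\cat{V}}(B, \cat{C}(X, Y)) \to \underline{\cat{V}}(A, \cat{C}(X, Y))$ that is natural in $Y \in \cat{C}_0$; transporting along the defining isomorphisms $\cat{C}_0(X \otimes A, Y) \cong \underline{\cat{V}}(A, \cat{C}(X, Y))$ produces a natural transformation between covariant representables on $\cat{C}_0$. By the Yoneda lemma, this transformation is uniquely of the form $(X \otimes f)^*$ for a single morphism $X \otimes f \from X \otimes A \to X \otimes B$ in $\cat{C}_0$.

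Functoriality of $X \otimes -$ then follows directly from the uniqueness clause of Yoneda. For identities, $\id[A]^*$ is the identity natural transformation, and this is also represented by $\id[X \otimes A]$, so $X \otimes \id[A] = \id[X \otimes A]$. For a composable pair $A \xrightarrow{f} B \xrightarrow{g} C$, the identity $(gf)^* = f^* \circ g^*$ transports on the other side of the isomorphisms to precomposition with $(X \otimes g) \circ (X \otimes f)$, whence $X \otimes (gf) = (X \otimes g) \circ (X \otimes f)$. The stated naturality in $A \in \cat{A}^\op$ of the defining isomorphism is exactly the commutativity of the defining square, while naturality in $Y \in \cat{C}_0$ is already built into the definition of a weak tensor.

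The weak cotensor statement is obtained by the same argument applied to the isomorphisms $\cat{C}_0(Z, A \pitchfork X) \cong \underline{\cat{V}}(A, \cat{C}(Z, X))$, which are natural in $Z \in \cat{C}_0^\op$. Here $f^* \from \underline{\cat{V}}(B, \cat{C}(Z, X)) \to \underline{\cat{V}}(A, \cat{C}(Z, X))$ transports to a natural transformation between contravariant representables, which by Yoneda is postcomposition with a unique morphism $f \pitchfork X \from B \pitchfork X \to A \pitchfork X$, reversing the direction as required so that $- \pitchfork X$ becomes a functor $\cat{A}^\op \to \cat{C}_0$. There is no real obstacle anywhere: the only point to notice is that the defining isomorphism is required only to be unenriched-natural in the non-$A$ variable, which is exactly what feeds the Yoneda lemma in $\cat{C}_0$ and gives a single morphism representing each $f$.
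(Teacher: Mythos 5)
Your proposal is correct and follows essentially the same route as the paper, which likewise obtains $X \otimes f$ (and $f \pitchfork X$) from the Yoneda lemma applied to the transported precomposition map $f^*$ in the paragraph preceding the proposition, and deduces functoriality and naturality in $A$ from the uniqueness of the representing morphism. No gaps.
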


We now consider the case where $(\cat{V}, \otimes)$ is the category of cubical sets with the geometric product $(\cSet, \otimes)$.
By \cref{ex:tensor-of-product}, if a cubical category $\ccat{C}$ admits weak tensors by $\cube{1}$ then it admits weak tensors by $\cube{n}$ for all $n$.
By \cref{tensors-are-functorial}, these weak tensors form a cocubical object $\cube{\bullet} \otimes X \from \boxcat \to \ccat{C}_0$.
We show the cubical structure map $\cube{n} \otimes X \to \cube{0} \otimes X = X$ is a homotopy equivalence, showcasing our first interaction between enriched structure and homotopical structure.
\begin{proposition} \label{tensor-cubical-htpy-equiv}
  Let $\ccat{C}$ be a cubical category.
  \begin{enumerate}
    \item If $\ccat{C}$ admits weak tensors by $\cube{1}$ then the map
    \[ \begin{tikzcd}
      \cube{n} \otimes X \ar[r, "{\mathord{!} \otimes X}"] & \cube{0} \otimes X = X
    \end{tikzcd} \]
    is a homotopy equivalence.
    \item If $\ccat{C}$ admits weak cotensors by $\cube{1}$ then the map
    \[ \begin{tikzcd}
      X = \cube{0} \otimes X \ar[r, "{\mathord{!} \pitchfork X}"] & \cube{n} \otimes X
    \end{tikzcd} \]
    is a homotopy equivalence.
  \end{enumerate}
\end{proposition}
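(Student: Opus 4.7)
The plan is to construct an explicit homotopy inverse. For part (1), let $s \from \cube{0} \to \cube{n}$ be the inclusion of the initial vertex $(0, \ldots, 0)$. Since $\mathord{!} \circ s = \id_{\cube{0}}$ in $\cSet$, functoriality of $- \otimes X$ (\cref{tensors-are-functorial}) gives $(\mathord{!} \otimes X) \circ (s \otimes X) = \id_X$ on the nose, so one composite needs no homotopy. The task reduces to producing a zig-zag of 1-cubes in $\ccat{C}(\cube{n} \otimes X, \cube{n} \otimes X)$ from $\id_{\cube{n} \otimes X}$ to the other composite $(s \circ \mathord{!}) \otimes X$.

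The key step is to transport this question across the weak-tensor adjunction
\[ \ccat{C}(\cube{n} \otimes X, \cube{n} \otimes X) \cong \underline{\cSet}\bigl(\cube{n}, \ccat{C}(X, \cube{n} \otimes X)\bigr). \]
Write $u \from \cube{n} \to \ccat{C}(X, \cube{n} \otimes X)$ for the map that transposes to $\id_{\cube{n} \otimes X}$ under this isomorphism. Then $\id_{\cube{n} \otimes X}$ and $(s \circ \mathord{!}) \otimes X$ correspond to $u$ and $u \circ (s \circ \mathord{!})$ respectively, so post-composition with $u$ carries any zig-zag in the cubical function space $\underline{\cSet}(\cube{n}, \cube{n})$ between $\id_{\cube{n}}$ and $s \circ \mathord{!}$ to a zig-zag of the desired form. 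It therefore suffices to produce the former: a zig-zag of cubical-set maps $\cube{1} \otimes \cube{n} \to \cube{n}$.

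Such a zig-zag can be built one coordinate at a time. Identifying $\cube{n} \cong \cube{1}^{\otimes n}$ and defining $\sigma_k \from \cube{n} \to \cube{n}$ to be the map that zeroes the first $k$ coordinates and fixes the remaining $n - k$ (so $\sigma_0 = \id_{\cube{n}}$ and $\sigma_n = s \circ \mathord{!}$), one uses the negative connection $\conn{}{1, -} \from \cube{2} \to \cube{1}$ --- which satisfies $\conn{}{1, -} \circ \face{}{1, 0} = \face{}{1, 0} \circ \mathord{!}$ (constant at $0$) and $\conn{}{1, -} \circ \face{}{1, 1} = \id_{\cube{1}}$ --- together with appropriate degeneracies and faces inside the tensor $\cube{1}^{\otimes n}$ to build, for each $k = 1, \ldots, n$, a map $H_k \from \cube{1} \otimes \cube{n} \to \cube{n}$ with faces $\sigma_k$ and $\sigma_{k-1}$. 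Chaining the $H_k$ furnishes the required zig-zag (in fact, a monotone chain of $n$ 1-cubes).

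Part (2) is formally dual, using weak cotensors throughout: the same vertex inclusion gives a splitting $s \pitchfork X \from \cube{n} \pitchfork X \to X$ of the map in question, and transport through the weak-cotensor adjunction $\ccat{C}(Z, A \pitchfork X) \cong \underline{\cSet}(A, \ccat{C}(Z, X))$ allows us to reuse the same zig-zag of cubical-set maps $\cube{1} \otimes \cube{n} \to \cube{n}$ constructed above. I expect the main technical step to be the explicit assembly of the $H_k$ as concrete composites of connections, degeneracies, and faces compatible with the non-symmetric geometric product on $\cSet$; the bookkeeping for the weak-(co)tensor adjunctions, while requiring care in the absence of symmetry, is otherwise routine.
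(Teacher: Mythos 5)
Your proof is correct and takes essentially the same route as the paper's: a section given by a face/vertex inclusion, a homotopy supplied by a connection, and transport of that homotopy through the weak (co)tensor isomorphism using the naturality from \cref{tensors-are-functorial} (the paper merely organizes it one degeneracy $\degen{}{1} \otimes X$ at a time, invoking closure of homotopy equivalences under composition, whereas you contract all of $\cube{n}$ onto its initial vertex in a single chain of connection $1$-cubes; your observation that each $H_k$ must be exhibited as a composite of generators of $\boxcat$, since $\cSet(\cube{n+1},\cube{n}) = \boxcat([1]^{n+1},[1]^n)$, is exactly the remaining bookkeeping and it goes through, e.g.\ $H_k = (\face{}{1,0})^{k-1}\,\conn{}{1,1}\,(\degen{}{2})^{k-1}$). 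One small slip in terminology: the connection satisfying your stated identities (composite with $\face{}{1,0}$ constant at $0$, composite with $\face{}{1,1}$ the identity) is the minimum, i.e.\ the \emph{positive} connection $\conn{}{1,1}$ in this paper's conventions rather than the negative one $\conn{}{1,0}$ --- harmless, since the identities you list pin down the map you actually use.
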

\begin{proof}
  We prove (1), as the proof of (2) is analogous. 
  
  Since homotopy equivalences compose, it suffices to show the map $\degen{}{1} \otimes X \from \cube{n+1} \otimes X \to \cube{n} \otimes X$ is a homotopy equivalence for all $n \geq 0$.
  This map has a section given by $\face{}{1,1} \otimes X$, so it remains to construct a homotopy $\id \sim \face{}{1,1}\degen{}{1} \otimes X$.
  
  By \cref{tensors-are-functorial}, we have a commutative diagram
  \[ \begin{tikzcd}
    \cube{n+1} \otimes X \ar[d, "{\face{}{1,0} \otimes X}"'] \ar[rd, bend left, "\id"] & {} \\
    \cube{n+2} \otimes X \ar[r, "{\conn{}{1,0} \otimes X}"] & \cube{n+1} \otimes X \\
    \cube{n+1} \otimes X \ar[u, "{\face{}{1,1 \otimes X}}"] \ar[ur, bend right, "\face{}{1,1}\degen{}{1} \otimes X"'] & {}
  \end{tikzcd} \]
  From the isomorphism
  \begin{align*} 
    \ccat{C}(\cube{n+2} \otimes X, \cube{n+1} \otimes X) &\cong \underline{\cSet} \big( \cube{n+2}, \ccat{C}(X, \cube{n+1} \otimes X) \big) \\
    &\cong \underline{\cSet} \big( \cube{1} \otimes \cube{n+1}, \ccat{C}(X, \cube{n+1} \otimes X) \big) \\
    &\cong \underline{\cSet} \big( \cube{1}, \ccat{C}(\cube{n+1} \otimes X, \cube{n+1} \otimes X) \big),
  \end{align*}
  we see the morphism $\conn{}{1,0} \otimes X$ determines a 1-cube in the mapping space $\ccat{C}(\cube{n+1} \otimes X, \cube{n+1} \otimes X)$, whereas the arrows $\id$ and $\face{}{1,1}\degen{}{1}$ determine 0-cubes.
  Applying naturality at the maps $\face{}{1,0} \otimes X$ and $\face{}{1,1} \otimes X$ gives the desired endpoint equalities for the 1-cube $\conn{}{1,0} \otimes X$.
\end{proof}
\begin{remark}
  The proof of \cref{tensor-cubical-htpy-equiv} makes use of negative connections.
  For categories enriched in cubical sets without connections, the result still holds.
  Any cubical category is DK-equivalent to a cubical category with connections, and the notion of homotopy equivalence is invariant under DK-equivalence.
\end{remark}

The final ingredient in the proof of \cref{cubical-cat-is-localization} is an analogue of \cite[Cor.~18.7.7]{Hirschhorn} for cubical sets.
Recall that a \emph{bicubical set} is a cubical object in $\cSet$, i.e.\ a functor $\boxcat^\op \to \cSet$; this is the same data as a functor $\boxcat^\op \times \boxcat^\op \to \Set$.
The \emph{diagonal} of a bicubical set $X$ is the cubical set $\diag X$ defined by the composite 
\[ \boxcat^\op \xrightarrow{(\id, \id)} \boxcat^\op \times \boxcat^\op \xrightarrow{X} \Set . \]
\begin{proposition} \label{diagonal-computes-hocolim}
  For a bicubical set $X \from \boxcat^\op \to \cSet$, the diagonal $\diag X$ computes the (homotopy) colimit of $X$, regarded as a diagram $\boxcat^\op \to \SS$ of $\infty$-groupoids.
  That is, there is a zig-zag of weak equivalences
  \[ \diag X \simeq \colim\limits_{[1]^n \in \boxcat^\op} X_n . \]
\end{proposition}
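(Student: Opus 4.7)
I plan to invoke the universal property of the $\infty$-category $\PSh(\boxcat) = \Fun(\boxcat^{\op}, \SS)$ as the free cocompletion of $\boxcat$: a cocontinuous functor $\PSh(\boxcat) \to \SS$ is determined up to natural equivalence by its restriction along the Yoneda embedding $y \from \boxcat \to \PSh(\boxcat)$. The goal is to present both $X \mapsto \diag X$ and $X \mapsto \hocolim_{[1]^n \in \boxcat^{\op}} X_n$ as cocontinuous functors $\PSh(\boxcat) \to \SS$ whose restrictions along $y$ are canonically the constant functor $\boxcat \to \SS$ at a point, after which the proposition follows from the universal property.

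The right-hand side is literally the colimit functor $\PSh(\boxcat) \to \SS$, hence cocontinuous, and on a representable it is a point: $\colim y_n \simeq \ast$ because the category of elements $\boxcat \slice [1]^n$ has a terminal object. For the left-hand side, $\diag$ arises at the $1$-categorical level as restriction along the diagonal $\delta \from \boxcat^{\op} \to \boxcat^{\op} \times \boxcat^{\op}$; as a restriction functor it has both Kan extensions as adjoints and so preserves colimits, and composition with the realization $\cSet \to \SS$ (also cocontinuous) yields a functor $\cSet^{\boxcat^{\op}} \to \SS$. A natural bicubical lift of the representable $y_n$ is the bicubical set $([1]^p, [1]^q) \mapsto \boxcat([1]^p, [1]^n)$ (constant in $q$); its diagonal is precisely $\cube{n}$, whose realization is contractible.

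To descend $\diag$ to a functor $\PSh(\boxcat) \to \SS$, I need to verify that it sends levelwise weak equivalences of bicubical sets to equivalences in $\SS$. This is the cubical analogue of a classical bisimplicial theorem (Bousfield--Friedlander) and I expect to prove it by exhibiting $\diag$ as a left Quillen functor from the Reedy model structure on $\cSet^{\boxcat^{\op}}$ to the cubical Grothendieck model structure on $\cSet$, then invoking the standard reduction of weak equivalences of Reedy cofibrant diagrams to cell-by-cell weak equivalences. Once this is in place, both Yoneda restrictions coincide (canonically) with the constant functor at a point, and the universal property of free cocompletion yields the required natural equivalence between the two cocontinuous extensions; translated back to $\cSet$, this equivalence is presented by a zig-zag of weak equivalences.

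The main obstacle is the cubical Bousfield--Friedlander step, i.e., showing that $\diag$ preserves levelwise weak equivalences of bicubical sets. While this is routine in the bisimplicial case, the lack of symmetry of the geometric product on cubical sets warrants a careful Reedy-theoretic argument for $\boxcat$, or alternatively a transfer via the triangulation $T \from \cSet \to \sSet$ that reduces the statement to the established bisimplicial result.
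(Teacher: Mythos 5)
Your approach is valid but takes a genuinely different route from the paper's at the final identification step. The paper shows $\diag$ is left Quillen from the injective model structure on $\Fun(\boxcat^\op,\cSet)$ and then identifies the derived \emph{right} adjoint $R$ with the constant-diagram functor $\SS\to\Fun(\boxcat^\op,\SS)$, using that $\cSet$ is cartesian monoidal (hence $(RK)_{m,\bullet}\cong K^{\cube{m}}$ is homotopy-constant for $K$ Kan), and concludes by uniqueness of adjoints. You instead invoke the universal property of $\Fun(\boxcat^\op,\SS)$ as the free cocompletion of $\boxcat$ and check that both cocontinuous extensions send representables to a point. What you buy is that you sidestep the monoidality of the cartesian product on $\cSet$, which the paper singles out in a subsequent remark as the reason the argument needs at least one connection. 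What both arguments must share is the hard input: that $\diag$ preserves levelwise weak equivalences (the paper cites a prior result of Carranza--Kapulkin--Wong, Ex.~3.11, for this).

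Two cautions. First, in the sentence ``as a restriction functor it has both Kan extensions as adjoints and so preserves colimits, and composition with the realization $\cSet\to\SS$ (also cocontinuous) yields a functor $\cSet^{\boxcat^\op}\to\SS$'' you are conflating $1$-categorical and $\infty$-categorical colimits: the localization $\cSet\to\SS$ does \emph{not} preserve $1$-colimits (e.g.\ ordinary pushouts of cubical sets are not homotopy pushouts). The cocontinuity you need on $\Fun(\boxcat^\op,\SS)$ should come from the derived left Quillen adjunction directly---a derived left Quillen functor is a left adjoint of $\infty$-categories---rather than from commuting $\diag$ with the localization of the $1$-category of bicubical sets. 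Second, your closing concern about the non-symmetry of the geometric product is misplaced: $\diag$ and its right adjoint only involve the \emph{cartesian} product, and the delicacy (per the paper's remark) is the presence of connections. You should also note that for $\boxcat$, unlike $\Delta$, Reedy cofibrations of bicubical sets are not obviously all levelwise monomorphisms, so the safer choice is the injective model structure on $\Fun(\boxcat^\op,\cSet)$ (which always exists since $\cSet$ is combinatorial); the paper uses precisely this. The proposed transfer via triangulation is also less straightforward than you suggest, since there is no functor $\boxcat\to\Delta$ inducing a comparison of bicubical and bisimplicial sets compatible with diagonals.
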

\begin{proof}
  The functor $\diag \from \Fun({\boxcat^\op}, \cSet) \to \cSet$ admits a right adjoint $R \from \cSet \to \Fun(\boxcat^\op, \cSet)$ which sends a cubical set $K$ to the bicubical set
  \[ (RK)_{m, n} := \cSet \big( \diag (\cube{m, n}) , K \big) = \cSet \big( \cube{m} \times \cube{n}, K \big). \]
  The functor $\diag$ sends levelwise monomorphisms to monomorphisms, and levelwise weak equivalences to weak equivalences \cite[Ex.~3.11]{carranza-kapulkin-wong:diagonal}.
  Thus, it is left Quillen with respect to the injective model structure on the functor category $\Fun(\boxcat^\op, \cSet)$. 
  (This can also be proved by rewriting the diagonal as a coend $\diag X\cong \int^{[1]^n\in \square}\square^n\times X_n$, and then using the formalism of Reedy categories \cite[Proposition A.2.9.26.]{lurie:htt}).

  For fixed $m \geq 0$, the cubical set $(RK)_{m, \bullet}$ is isomorphic to the exponential object $K^{\cube{m}}$ (i.e.\ the right adjoint to the functor $- \times \cube{m}$).
  Since the Grothendieck model structure on $\cSet$ is monoidal with respect to the categorical product \cite[Thm.~5.13]{doherty:symmetry}, $R$ sends cubical Kan complexes to homotopy-constant diagrams (i.e.\ it sends all morphisms of $\boxcat^\op$ to weak equivalences).
  This shows the total right derived functor models the constant diagram functor $\SS \to \SS^{\boxcat^\op}$, hence the total left derived functor models the $(\infty, 1)$-colimit.
  Since all objects of $\Fun(\boxcat^\op, \cSet)$ are cofibrant in the injective model structure, the diagonal functor is weakly equivalent to the homotopy colimit.
\end{proof}
\begin{remark}
  The proof of \cref{diagonal-computes-hocolim} requires at least one connection (so that $\cSet$ is a monoidal model category with respect to the categorical product).
\end{remark}
We use \cref{diagonal-computes-hocolim} to deduce that every cubical set is its own geometric realization. 
\begin{corollary} \label{cubical-set-geometric-realization}
  Let $X$ be a cubical set.
  Then, $X$ is the (homotopy) colimit of the diagram $\boxcat^\op \xrightarrow{X} \Set \ito \SS$.
\end{corollary}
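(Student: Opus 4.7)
The plan is to reduce the statement to \cref{diagonal-computes-hocolim} by realizing $X$ as the diagonal of a bicubical set whose slices in the second variable are discrete.

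Explicitly, define the bicubical set $\tilde{X} \from \boxcat^\op \times \boxcat^\op \to \Set$ by $\tilde{X}_{m,n} := X_n$, with structure maps in the first coordinate being identities and those in the second induced by the corresponding structure maps of $X$. Equivalently, viewed as a functor $\boxcat^\op \to \cSet$ in the second variable, $\tilde{X}$ sends $[1]^n$ to the discrete cubical set on $X_n$, namely $\coprod_{s \in X_n} \cube{0}$. A direct unwinding shows $\diag \tilde{X} \cong X$: along any map $f \from [1]^n \to [1]^{n'}$, the diagonal applies $(f, f)$, which yields $X(f)$ because the first factor acts as the identity.

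Next, for any set $S$ the discrete cubical set $\coprod_{s \in S} \cube{0}$ corresponds, under the localization $\cSet \to \SS$, to $S$ viewed as a discrete $\infty$-groupoid, and this identification is natural in $S \in \Set$. Hence the diagram $\tilde{X} \from \boxcat^\op \to \cSet \to \SS$ is naturally equivalent to the composite $\boxcat^\op \xrightarrow{X} \Set \ito \SS$. Applying \cref{diagonal-computes-hocolim} then gives
\[ X \cong \diag \tilde{X} \simeq \colim_{[1]^n \in \boxcat^\op} \tilde{X}_n \simeq \colim_{[1]^n \in \boxcat^\op} X_n \]
in $\SS$, as required. The argument is essentially formal; the only small verification needed is the naturality of the equivalence between a discrete cubical set and its underlying set, which follows from the fact that the functor sending a set to the corresponding discrete cubical set factors through the subcategory of cubical sets with discrete realization.
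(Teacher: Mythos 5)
Your proposal is correct and follows essentially the same route as the paper: view $X_n$ (levelwise, as discrete cubical sets) as a bicubical set whose diagonal is $X$ itself, identify the resulting diagram of discrete cubical sets with the composite $\boxcat^\op \xrightarrow{X} \Set \ito \SS$, and apply \cref{diagonal-computes-hocolim}. The extra verifications you spell out (the diagonal computation and the naturality of the discrete-cubical-set identification) are left implicit in the paper but are exactly the intended steps.
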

\begin{proof}
  We view $[1]^n \mapsto X_n$ as a bicubical set $\boxcat^\op \to \cSet$ taking values in discrete cubical sets.
  The diagonal of this bicubical set is again $X$, which is the homotopy colimit by \cref{diagonal-computes-hocolim}.
\end{proof}

We are now ready to prove \cref{cubical-cat-is-localization}
\begin{proof}[Proof of \cref{cubical-cat-is-localization}]
	We prove the result for weak cotensors; the case of weak tensors proceeds analogously. We will also model $\infty$-groupoids by cubical Kan complexes, which is justified by \cref{kan_comparison}.

	In the homotopy-coherent nerve $\cohnerve \ccat{C}^{\mathrm{f}}$, homotopy equivalences become equivalences.
  Thus, the functor $\ccat{C}_0 \to \cohnerve \ccat{C}^{\mathrm{f}}$ induces the bottom arrow in the (homotopy) commutative diagram:
  \[ \begin{tikzcd}
    {} & {\ccat{C}_0} \ar[rd] \ar[ld] & {} \\
    {\ccat{C}_0[W^{-1}]} \ar[rr, dotted] & {} & {\cohnerve \ccat{C}^{\mathrm{f}}}
  \end{tikzcd} \]
  We show this functor induces an equivalence on mapping space; essential surjectivity is immediate.

  For $Y \in \ccat{C}$, the weak cotensors of $\ccat{C}$ induce a diagram 
  \[ - \pitchfork Y \from \boxcat^\op \to \ccat{C}_0. \]
  Since $[1]^0$ is initial in $\boxcat^\op$ and $[1]^0 \pitchfork Y = Y$, this diagram ascends to a diagram in $Y \slice \ccat{C}_0$.
  We show this diagram is a resolution of $Y$:
  \begin{itemize}
    \item[(R1)] $\boxcat^\op$ is weakly contractible as it has an initial object.
    \item[(R2)] The structure maps $Y \to \cube{n} \pitchfork Y$ are homotopy equivalences by \cref{tensor-cubical-htpy-equiv}.
    \item[(R3)]
    The presheaf $\colim\limits_{[1]^n \in \boxcat^\op} \ccat{C}_0 (-, \cube{n} \pitchfork Y)\from \ccat{C}_0\to \SS$ carries homotopy equivalences to equivalences. To see this, we observe that there are equivalences of cubical sets
\[
	\colim\limits_{[1]^n \in \boxcat^\op} \ccat{C}_0 (X, \cube{n} \pitchfork Y) 
	\cong \colim\limits_{[1]^n \in \boxcat^\op} \big( \ccat{C} (X,Y) \big)_n
	\simeq \ccat{C}(X,Y),
\]
  where the last equivalence uses \cref{cubical-set-geometric-realization}.
    Given a homotopy equivalence $f \from X' \to X$, the naturality in $Z$ from item (2) of \cref{tensors-are-functorial} allows us to identify the map $\colim \ccat{C}_0(X, \cube{n} \pitchfork Y) \to \colim \ccat{C}_0(X', \cube{n} \pitchfork Y)$ with the precomposition map $f^* \from \ccat{C}(X, Y) \to \ccat{C}(X', Y)$, which is a homotopy equivalence.
  \end{itemize}

  Now, we have a diagram of presheaves on $\ccat{C}_0^{\op}$
  \[ \begin{tikzcd}
    {} & {\ccat{C}_0}(-, Y) \ar[rd, "\beta"] \ar[ld, "\alpha"'] & {} \\
    {\ccat{C}_0[W^{-1}]}(-, Y) \ar[rr, dotted] & {} & {\cohnerve \ccat{C}^{\mathrm{f}}}(-, Y).
  \end{tikzcd} \]
By \cref{repr-ex-infty-t-commute}, we also have equivalences
\[
	\cohnerve \ccat{C}^{\mathrm{f}}(-, Y) \simeq \ccat{C}^{\mathrm{f}}(-, Y) \simeq \colim\limits_{[1]^n \in \boxcat^\op} \ccat{C}_0(-, \cube{n} \pitchfork Y).
\]
Under this equivalence, we can identify the map $\beta$  as the map $\phi_{[1]^0}$ in diagram (\ref{eq:phi-square}).
  The map $\alpha$ is identified with the unit of the $\gamma_! \adj \gamma^*$ adjunction instantiated at $\ccat{C}_0(-, Y)$.
  With this, the dotted arrow must be homotopic to the map $\Phi_{[1]^0}$, which is an equivalence by \cref{thm:DMSL}.
\end{proof}

The analog of \cref{cubical-cat-is-localization} for simplicial categories is easily deduced from the cubical version.
\begin{corollary} \label{simplicial-cat-is-localization}
  Let $\scat{C}$ be a simplicial category which admits weak tensors or weak cotensors by $\simp{1}$.
  Then, for any fibrant replacement $\ccat{C} \to \ccat{C}^\mathrm{f}$ of $\ccat{C}$ in the Bergner model structure, the map
  \[ \ccat{C}_0 \ito \cohnerve[\Delta] \ccat{C} \to \cohnerve[\Delta] \ccat{C}^{\mathrm{f}} \]
  is a localization of $\ccat{C}_0$ at homotopy equivalences. 
\end{corollary}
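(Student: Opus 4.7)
The plan is to reduce this to the cubical case \cref{cubical-cat-is-localization} via a change-of-base argument along the triangulation--cubicalization adjunction $T\dashv U$ between $(\cSet, \otimes)$ and $(\sSet, \times)$.  Recall that $T$ is strong monoidal with $T\cube{1}\cong \simp{1}$ (and more generally $T\cube{n}\cong (\simp{1})^n$), which is precisely the setup of \cref{ex:change-of-base-tensor}.

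First, given a simplicial category $\scat{C}$ with weak (co)tensors by $\simp{1}$, I would form the cubical category $U_\bullet \scat{C}$ by change of base.  By construction, $(U_\bullet \scat{C})_0 = \scat{C}_0$, and by \cref{ex:change-of-base-tensor} the category $U_\bullet \scat{C}$ admits weak (co)tensors by $\cube{1}$ (with the underlying object being $\simp{1}\otimes X$ or $\simp{1}\pitchfork X$ computed in $\scat{C}$).  Next I would check that the class of homotopy equivalences in $\scat{C}$ agrees with the class of homotopy equivalences in $U_\bullet \scat{C}$: this is immediate because $U$ is the identity on vertices and the $1$-cubes of $U(\scat{C}(X,Y))$ are precisely the $1$-simplices of $\scat{C}(X,Y)$, so any zig-zag witness transports directly.

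With these compatibilities in place, \cref{cubical-cat-is-localization} applied to $U_\bullet\scat{C}$ gives that the map $\scat{C}_0\to \ccohnerve (U_\bullet \scat{C})^{\mathrm{f}}$ is a localization at homotopy equivalences.  To conclude, I would identify this with the map $\scat{C}_0 \to \scohnerve \scat{C}^{\mathrm{f}}$ up to equivalence of $\infty$-categories.  This identification is supplied by \cref{str-cohnerve-quillen-equiv}, which asserts that $\ccohnerve$ is a Quillen equivalence from the cubical Bergner model structure to the Joyal model structure; together with the analogous simplicial Quillen equivalence (Dugger--Spivak or Lurie), both $\ccohnerve (U_\bullet\scat{C})^{\mathrm{f}}$ and $\scohnerve\scat{C}^{\mathrm{f}}$ present the same $\infty$-category, canonically and compatibly with the map from $\scat{C}_0$.

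The main obstacle is the very last step: namely, producing a natural equivalence of $\infty$-categories $\ccohnerve (U_\bullet \scat{C})^{\mathrm{f}}\simeq \scohnerve \scat{C}^{\mathrm{f}}$ that is compatible with the map out of $\scat{C}_0$.  A clean route is to verify that the two nerves are related by an adjunction $\sSusp \dashv U\scohnerve$ (modulo fibrant replacement) and that the resulting comparison is a weak equivalence on fibrant objects by the Quillen equivalence of \cref{str-cohnerve-quillen-equiv}; since both the source and target present localizations along bijections on objects, the universal property of localization (from the definition) then forces the induced map between them to be an equivalence.  As a fallback, one may dispense with change of base entirely and mimic the proof of \cref{cubical-cat-is-localization} verbatim with $\cube{n}$ replaced by $\simp{n}$, using \cite[Cor.~18.7.7]{Hirschhorn} in place of \cref{cubical-set-geometric-realization} and an analogous simplicial version of \cref{tensor-cubical-htpy-equiv} built from degeneracies; all three resolution conditions (R1)--(R3) then hold mutatis mutandis.
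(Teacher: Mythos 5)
Your strategy is the right one, and the first half of the argument agrees with the paper's: change base along $T \dashv U$, note $T$ is strong monoidal with $T\cube{1} \cong \simp{1}$, apply \cref{ex:change-of-base-tensor} to get weak (co)tensors by $\cube{1}$ on $U_\bullet\scat{C}$, and invoke \cref{cubical-cat-is-localization}. Where the proposal diverges is the final step, which you flag as the ``main obstacle'' and then resolve by a somewhat speculative route through \cref{str-cohnerve-quillen-equiv} and an adjunction $\sSusp \dashv U\cohnerve[\Delta]$. This is more than is needed and is not actually pinned down: a Quillen-equivalence argument would only give an abstract equivalence of $\infty$-categories, and getting compatibility with the map out of $\scat{C}_0$ would still require an explicit comparison.

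The missing ingredient is that there is no obstacle at all: the paper recalls the \emph{natural isomorphism} $\cohnerve[\Delta] \cong \cohnerve \circ U_\bullet$ (\cref{sec:cubical-categories}, citing Kapulkin--Voevodsky). Since $U_\bullet$ is a right Quillen equivalence that preserves DK-equivalences, a Bergner fibrant replacement $\scat{C} \to \scat{C}^{\mathrm{f}}$ yields a cubical Bergner fibrant replacement $U_\bullet\scat{C} \to U_\bullet\scat{C}^{\mathrm{f}}$; \cref{cubical-cat-is-localization} applied to this replacement gives that $\scat{C}_0 \to \cohnerve(U_\bullet\scat{C}^{\mathrm{f}})$ is a localization, and the natural isomorphism identifies this \emph{on the nose} with $\scat{C}_0 \to \cohnerve[\Delta]\scat{C}^{\mathrm{f}}$. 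No further comparison of model structures is required. Your fallback (redoing \cref{cubical-cat-is-localization} in the simplicial setting, replacing \cref{cubical-set-geometric-realization} by \cite[Cor.~18.7.7]{Hirschhorn} and reproving \cref{tensor-cubical-htpy-equiv} via degeneracies) would also work, but it defeats the purpose, since the whole point of the corollary is to get the simplicial result for free from the cubical one.
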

\begin{proof}
  Since the triangulation functor $T \from \cSet \to \sSet$ is a monoidal left adjoint and $T\cube{1} \cong \simp{1}$, the cubical category $U_\bullet \scat{C}$ admits weak (co)tensors by $\cube{1}$ (\cref{ex:change-of-base-tensor}).
  By \cref{cubical-cat-is-localization}, the map $\scat{C}_0 \to \cohnerve U_\bullet \scat{C}^{\mathrm{f}}$ is a localization.
  We recall that $\cohnerve[\Delta] \cong \cohnerve \circ U_\bullet$, which concludes the proof.
\end{proof}
An instance of \cref{simplicial-cat-is-localization} is when $\scat{C}$ is a $(2, 1)$-category.
The \emph{Duskin nerve} of a 2-category $\cohnerve[\mathrm{D}] \cat{C}$ coincides with the homotopy coherent nerve of the base change of $\cat{C}$ along the nerve functor $N:\Cat\to \sSet$ \cite[00BF]{kerodon}. Thus, \cref{simplicial-cat-is-localization} specializes to the following corollary:

\begin{corollary} \label{localization-2-categories}
  Let $\cat{C}$ be a 2-category which admits weak tensors or weak cotensors by the walking arrow category $0 \to 1$. 
  If the mapping categories of $\cat{C}$ are all groupoids then the induced map
  \[ \cat{C}_0 \to \cohnerve[\mathrm{D}] \cat{C} \]
  exhibits $\cohnerve[\mathrm{D}] \cat{C}$ as a localization of $\cat{C}_0$ at equivalences of $\cat{C}$. \qed
\end{corollary}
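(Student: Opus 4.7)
The plan is to deduce this from \cref{simplicial-cat-is-localization} applied to the change of base $N_\bullet \cat{C}$ of $\cat{C}$ along the nerve functor $N \from \Cat \to \sSet$, using the identification $\cohnerve[\mathrm{D}] \cat{C} \simeq \cohnerve[\Delta] N_\bullet \cat{C}$ recalled just above the statement.

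First I would verify the hypotheses of \cref{simplicial-cat-is-localization}. The functor $N$ is right adjoint to the fundamental category functor $\tau \from \sSet \to \Cat$, and $\tau$ is strong monoidal for cartesian products (a standard fact that follows from the explicit computation $\tau(\simp{n} \times \simp{m}) \cong [n] \times [m]$). Since $\tau(\simp{1})$ is the walking arrow $0 \to 1$, \cref{ex:change-of-base-tensor} with $L = \tau$, $R = N$, $A = \simp{1}$ transports the assumed weak (co)tensors by $0 \to 1$ in $\cat{C}$ to weak (co)tensors by $\simp{1}$ in $N_\bullet \cat{C}$. Moreover, since every mapping category of $\cat{C}$ is a groupoid, each $N\cat{C}(X, Y)$ is a Kan complex, so $N_\bullet \cat{C}$ is locally Kan and hence already Bergner-fibrant. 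Applying \cref{simplicial-cat-is-localization} to $N_\bullet \cat{C}$ with fibrant replacement equal to the identity then yields that $(N_\bullet \cat{C})_0 \to \cohnerve[\Delta] N_\bullet \cat{C}$ is a localization at homotopy equivalences.

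Since $N$ is the identity on $0$-simplices we have $(N_\bullet \cat{C})_0 = \cat{C}_0$, and the target is $\cohnerve[\mathrm{D}] \cat{C}$ by the cited Kerodon tag 00BF. The only remaining step, and the one I expect to be the main subtle point, is to identify the class of homotopy equivalences in $N_\bullet \cat{C}$ (in the sense of \cref{def:hpty-equiv}, adapted to simplicial enrichment) with the class of equivalences in the $2$-category $\cat{C}$. A zigzag of $1$-simplices in $N\cat{C}(X, X)$ between $\id[X]$ and $gf$ is the data of a zigzag of $2$-cells, and since the mapping categories are groupoids every such $2$-cell is invertible, so the zigzag can be composed into a single invertible $2$-morphism $\id[X] \cong gf$; together with its counterpart on $Y$ this is exactly the data of an equivalence in $\cat{C}$. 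Conversely, any equivalence in $\cat{C}$ immediately furnishes such zigzags. This coincidence of the two classes closes the argument.
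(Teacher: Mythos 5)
Your proof is correct and follows essentially the same route as the paper, which states this corollary with a \qed and the one-line justification that it is a specialization of \cref{simplicial-cat-is-localization} via the identification $\cohnerve[\mathrm{D}] \cat{C} \cong \cohnerve[\Delta] N_\bullet \cat{C}$. You have simply spelled out the details the paper leaves implicit: the transport of weak (co)tensors via $\tau \dashv N$ and \cref{ex:change-of-base-tensor}, the observation that local Kan-ness makes the fibrant replacement superfluous, and the identification of homotopy equivalences in $N_\bullet\cat{C}$ with equivalences in $\cat{C}$ via the groupoid hypothesis.
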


In our final corollary, we will show that a certain cubical category of graphs considered in discrete homotopy theory is its localization at the class of homotopy equivalences.
This fact was recently used without proof by the last two authors along with Kim \cite{carranza-kapulkin-kim} to resolve one of the open problems posed during the workshop on ``Discrete and Combinatorial Homotopy Theory'' at the American Institute for Mathematics in March 2023.

We begin with a brief introduction to discrete homotopy theory, a homotopy theory of graphs with broad applications in matroid theory, hyperplane arrangements, and data analysis \cite{barcelo-laubenbacher:perspectives,kapulkin-kershaw:data}.

A \emph{graph} is a set $X$ (of vertices) along with a symmetric and reflexive relation (determining the edges between them).
A \emph{graph map} is a function between their underlying sets preserving the relation.
We write $\Graph$ for the category of graphs and graph maps.
An important family of examples of graphs consists of the \emph{interval graphs} $I_n$ with vertices: $0$, $1$, \ldots, $n$, and with relations $i \sim i+1$ for all $i < n$.
There are two canonical maps $l, r \colon I_{n+1} \to I_n$ with $l$ being the (weakly) monotone map repeating $0$ and $r$ being the (weakly) monotone map repeating $n$.

Given two graphs $X$ and $Y$, we define their \emph{box product} as the graph $X \mathop{\square} Y$ whose vertex set is the cartesian product of $X$ and $Y$, and whose edges are given by the following condition: $(x, y) \sim (x', y')$ if and only if either $x \sim x'$ and $y = y'$, or $x = x'$ and $y \sim y'$.
For example, we have $I_1 \mathop{\square} I_1 \iso C_4$.
The box product $\square$ defines a closed symmetric monoidal structure (one of exactly two such structures \cite{kapulkin-kershaw:monoidal}) on the category $\Graph$.
We denote the right adjoint to $X \mathop{\square} -$ by $\Graph^\square(X, -)$.
Concretely, the vertices of $\Graph^\square(X, Y)$ are graph maps $X \to Y$ with $f \sim g$ whenever there exists a graph map $H \colon X \mathop{\square} I_1 \to Y$ such that $H(-, 0) = f$ and $H(-, 1) = g$. 

Given graph maps $f, g \colon X \to Y$, a \emph{homotopy} from $f$ to $g$ consists of a choice of a natural number $n$ and a graph map $H \colon X \mathop{\square} I_n \to Y$ such that $H(-, 0) = f$ and $H(-, n) = g$.
One easily verifies that the relation of being homotopic is an equivalence relation.
A graph map $f \colon X \to Y$ is a homotopy equivalence if there is a graph map $g \colon Y \to X$ with homotopies $gf \Rightarrow \id$ and $fg \Rightarrow \id$.
With these definitions, one can show that $C_3$ and $C_4$ are homotopy equivalent to $I_0$, while the cycles $C_5$, $C_6$, \ldots are pairwise homotopy non-equivalent.
This is in stark contrast to the view of graphs as 1-dimensional CW-complexes, according to which all cycles are homotopy equivalent to $K(\mathbb{Z}, 1)$'s.

We define a family of functors, called the $m$-nerves and denoted $N^\mathbb{G}_m$, associating a cubical set to a graph.
Given a graph $X$, the $k$-cubes of $N^{\mathbb{G}}_m X$ are given by the formula
\[ (N^\mathbb{G}_m X)_k = \Graph(I_m^{\square k}, X)\text{.} \]
It is straightforward to verify that these are indeed cubical sets with connections, symmetries, and reversals, although the last two will not play a role in the remainder.
The maps $l, r \colon I_{n+1} \to I_n$ discussed earlier give rise to natural transformations that we can alternate:
\[ N^\mathbb{G}_1 \xhookrightarrow{l^*} N^\mathbb{G}_2 \xhookrightarrow{r^*} N^\mathbb{G}_3 \xhookrightarrow{l^*} N^\mathbb{G}_4 \xhookrightarrow{r^*} \ldots \]
Let $N^\mathbb{G}$ be the colimit of this sequence.
Explicitly, the $k$-cubes of $N^\mathbb{G}X$ consist of infinite $k$-dimensional grids that stabilize in a suitable sense (see \cite{carranza-kapulkin:cubical-graphs}).
\begin{theorem}[{\cite[Thm.~4.1]{carranza-kapulkin:cubical-graphs}}] \leavevmode
\begin{enumerate}
  \item The functor $N^\mathbb{G}$ takes values in cubical Kan complexes.
  \item The inclusions $l^*, r^* \colon N^\mathbb{G}_m \ito N^\mathbb{G}_{m+1}$ are natural weak equivalences.
\end{enumerate}
\end{theorem}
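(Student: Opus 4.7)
The plan is to prove the two parts largely independently, with (2) being the more straightforward of the two.

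For part (2), I would construct explicit cubical homotopy equivalences. The map $l\colon I_{m+1}\to I_m$ admits a section $\iota\colon I_m\to I_{m+1}$ given by $\iota(i)=i+1$, inducing a retraction $\iota^*\colon N^\mathbb{G}_{m+1}\to N^\mathbb{G}_m$ of $l^*$. The composite $\iota\circ l\colon I_{m+1}\to I_{m+1}$, which is the identity on vertices $i\ge 2$ but sends $0$ and $1$ to $1$, is graph-homotopic to $\id_{I_{m+1}}$ by a short explicit homotopy that ``pushes'' the vertex $0$ to $1$ in one step (using reflexivity of the edge relation so that vertices $i\ge 1$ stay fixed). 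Applying $\Graph(-,X)$ after $k$-fold box-power turns this into a cubical homotopy between $\id$ and $l^*\iota^*$ on $N^\mathbb{G}_{m+1}$, showing that $l^*$ is a cubical homotopy equivalence and hence a weak equivalence in the Grothendieck model structure on $\cSet$. The analogous argument for $r$ uses the identity inclusion $I_m\hookrightarrow I_{m+1}$ as its section. Naturality in $X$ is immediate, since the constructions are pre-compositions with graph morphisms independent of $X$.

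For part (1), I would reduce the horn-filling problem to a graph-theoretic extension problem. Since horns $\iobox{n}{i,\varepsilon}$ are finite and $N^\mathbb{G}(X)$ is a filtered colimit, any horn datum $\iobox{n}{i,\varepsilon}\to N^\mathbb{G}(X)$ factors through some $N^\mathbb{G}_m(X)$ and corresponds to a graph map from the ``horn subgraph'' of $I_m^{\square n}$---i.e., the union of all $2n-1$ codimension-one box-product faces except the missing $(i,\varepsilon)$-face---into $X$. The key technical lemma is that this partial map extends to a graph map $I_{m'}^{\square n}\to X$ for some $m'\ge m$ reachable by iterated applications of $l^*$ and $r^*$. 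The idea is to enlarge the grid by inserting a ``buffer'' strip of vertices along the $i$-axis, then propagate the data constantly across the buffer from the two faces adjacent to the missing face, using the reflexive edges in the interval graph.

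I expect the main obstacle to be the construction in (1): one must verify that the proposed extension is a well-defined graph map agreeing with the prescribed horn data on all $2n-1$ present faces simultaneously, and estimate how large $m'-m$ must be taken so that conflicting boundary data do not collide inside the buffer region. A careful induction on the dimension $n$ together with explicit coordinate arguments on $I_{m'}^{\square n}$ is likely unavoidable. Part (2), by contrast, should fall out of a direct homotopy of interval graphs without any size issues.
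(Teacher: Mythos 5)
The paper does not prove this statement; it is cited verbatim from Carranza--Kapulkin's earlier paper on cubical discrete homotopy theory, so there is no internal proof for me to compare you against. Assessing your plan on its own terms:

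For part (2), the high-level strategy is sound: $\iota(i)=i+1$ is indeed a section of $l$, so $\iota^*\circ l^*=\id$, and $\iota l=\max(-,1)\colon I_{m+1}\to I_{m+1}$ is $I_1$-graph-homotopic to the identity. The gap is in the sentence ``Applying $\Graph(-,X)$ after $k$-fold box-power turns this into a cubical homotopy.'' A cubical homotopy from $\id$ to $l^*\iota^*$ on $N^\mathbb{G}_{m+1}X$ is a map $N^\mathbb{G}_{m+1}X\to \underline{\cSet}(\cube{1},N^\mathbb{G}_{m+1}X)$, which on $k$-cubes must send a graph map $f\colon I_{m+1}^{\mathop{\square}k}\to X$ to a graph map $I_{m+1}^{\mathop{\square}(k+1)}\to X$; that is, the homotopy parameter is itself an $I_{m+1}$-coordinate, since $1$-cubes of $N^\mathbb{G}_{m+1}X$ are paths of length $m+1$, not length $1$. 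Your construction only produces a map $\Graph(I_{m+1}^{\mathop{\square}k},X)\to\Graph\bigl((I_{m+1}\mathop{\square}I_1)^{\mathop{\square}k},X\bigr)$, and $(I_{m+1}\mathop{\square}I_1)^{\mathop{\square}k}\cong I_{m+1}^{\mathop{\square}k}\mathop{\square}I_1^{\mathop{\square}k}$ is neither the right domain nor the right shape. To repair this you must first stretch the $I_1$-homotopy to an $I_{m+1}$-homotopy, e.g.\ $G\colon I_{m+1}\mathop{\square}I_{m+1}\to I_{m+1}$, $G(i,t)=\max(i,\min(1,t))$, then apply $G$ \emph{coordinatewise} (not as a box power) in the last variable, and then verify explicitly that the resulting family of set maps commutes with all faces, degeneracies, and connections. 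This is a genuine missing step, not a notational shortcut.

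For part (1), the reduction via finiteness of the open box and the filtered colimit is fine, but the ``buffer strip with constant propagation'' is not yet an argument. The prescribed face data on the $2n-2$ faces $\face{}{j,\eta}$ with $j\neq i$ is generally non-constant and constrains the cube throughout the putative buffer region; ``propagating constantly'' along the $i$-axis will contradict those constraints except in trivial cases, which one can already see for $n=2$. The real content of the theorem is the construction of a combinatorial retraction of $I_{m'}^{\mathop{\square}n}$ onto the open box subgrid after a sufficiently large stretch (and showing that stretching is implemented by the transition maps $l^*,r^*$), and that construction is entirely absent here. You identify the obstacle accurately, but as written the sketch describes the problem rather than solving it.
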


Furthermore, each $N^\mathbb{G}_m \colon \Graph \to \cSet$ is a lax monoidal functor, as is their colimit $N^\mathbb{G} \colon \Graph \to \cSet$.
Therefore, for any natural number $m$, we can promote the category of graphs to a cubical category by taking its objects to be graphs and defining $\Graph^m(X, Y) = N^\mathbb{G}_m \Graph^\square (X, Y)$.
Similarly, we define $\Graph^\infty$ as the cubical category whose objects are graphs and with $\Graph^\infty(X, Y) = N^\mathbb{G} \Graph^\square(X, Y)$.
All of the cubical categories $\Graph^1, \Graph^2, \ldots, \Graph^\infty$ have the same underlying relative category, namely the category of graphs with weak equivalences given by the homotopy equivalences.
We denote this underlying relative category by $\Graph$.
With this, we can now state our main result:
\begin{corollary} \label{localization-graphs}
  The canonical map
  \[
  \Graph \to \cohnerve \Graph^\infty
  \]
  exhibits $\cohnerve \Graph^\infty$ as the localization of $\Graph$ at the class of homotopy equivalences.
\end{corollary}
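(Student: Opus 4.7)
The plan is to apply \cref{cubical-cat-is-localization} to the cubical category $\Graph^{1}$ and then transfer the conclusion to $\Graph^{\infty}$ via a Dwyer--Kan equivalence. This detour is needed because $\Graph^{\infty}$ itself does not seem to admit weak tensors or cotensors by $\cube{1}$: the filtered colimit defining $N^{\mathbb{G}}$ prevents $N^{\mathbb{G}}\Graph^{\square}(I_{1} \mathop{\square} Z, X)$ and $\underline{\cSet}(\cube{1}, N^{\mathbb{G}}\Graph^{\square}(Z, X))$ from agreeing on the nose, whereas $\Graph^{1}$ admits a clean weak tensor.

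For $\Graph^{1}$ I would take $\cube{1} \otimes X := I_{1} \mathop{\square} X$ and verify the isomorphism of cubical sets
\[
  \Graph^{1}(I_{1} \mathop{\square} X, Y)_{n} = \Graph(I_{1}^{\square(n+1)} \mathop{\square} X, Y) = \Graph^{1}(X, Y)_{n+1} = \underline{\cSet}(\cube{1}, \Graph^{1}(X, Y))_{n}
\]
via the closure $(- \mathop{\square} X) \dashv \Graph^{\square}(X, -)$. The cubical structure on both sides arises by permuting the first $n$ of the $n+1$ interval factors, and naturality in $Y$ is immediate. One also checks that the two notions of homotopy equivalence coincide: a $1$-cube in $\Graph^{1}(X, Y) = N^{\mathbb{G}}_{1}\Graph^{\square}(X, Y)$ is exactly a single-step $I_{1}$-homotopy of graph maps, any $I_{n}$-homotopy decomposes as a composable chain of $n$ such steps, and reverse directions in a zig-zag are absorbed by the flip $I_{1} \to I_{1}$. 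Applying \cref{cubical-cat-is-localization} then gives, for any fibrant replacement $\Graph^{1} \acto (\Graph^{1})^{\mathrm{f}}$ in the cubical Bergner model structure (\cref{model-str-ccat}), a localization $\Graph \to \cohnerve (\Graph^{1})^{\mathrm{f}}$ at homotopy equivalences.

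To identify $\cohnerve (\Graph^{1})^{\mathrm{f}}$ with $\cohnerve \Graph^{\infty}$, I use that $N^{\mathbb{G}}$ takes values in cubical Kan complexes by Theorem~4.1(1) of \cite{carranza-kapulkin:cubical-graphs}, so $\Graph^{\infty}$ is locally Kan and hence fibrant. The colimit of lax monoidal functors $N^{\mathbb{G}}_{1} \to N^{\mathbb{G}}$ is itself a lax monoidal natural transformation, and so induces a cubical functor $\Graph^{1} \to \Graph^{\infty}$ which is the identity on objects and, by Theorem~4.1(2), a weak equivalence on each mapping space. This is a DK-equivalence, and factoring as $\Graph^{1} \acto (\Graph^{1})^{\mathrm{f}} \to \Graph^{\infty}$ yields a weak equivalence between fibrant cubical categories; the right Quillen equivalence $\cohnerve$ (\cref{str-cohnerve-quillen-equiv}) carries this to an equivalence $\cohnerve (\Graph^{1})^{\mathrm{f}} \simeq \cohnerve \Graph^{\infty}$, compatibly with the canonical inclusions of $\Graph$.

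The main technical point requiring care is the monoidal bookkeeping: one must verify that the transformations $l^{*}$ and $r^{*}$ are lax monoidal so that they induce cubical functors, and that the resulting comparison $\Graph^{1} \to \Graph^{\infty}$ fits into a commutative triangle with the canonical inclusions $\Graph \hookrightarrow \cohnerve(-)$. Once this is in place, the composite $\Graph \to \cohnerve (\Graph^{1})^{\mathrm{f}} \xrightarrow{\simeq} \cohnerve \Graph^{\infty}$ is simultaneously the canonical map and a localization, completing the proof.
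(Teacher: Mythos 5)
Your proposal is correct and takes essentially the same route as the paper: use the weak (co)tensors $\cube{1}\otimes X = I_1\mathop{\square}X$ on $\Graph^{1}$, invoke \cref{cubical-cat-is-localization}, and transport to $\Graph^{\infty}$ along the DK-equivalence $\Graph^{1}\to\Graph^{\infty}$. The only difference is that you introduce an intermediate fibrant replacement $(\Graph^{1})^{\mathrm{f}}$ and then compare it to $\Graph^{\infty}$; the paper shortcuts this by observing that $\Graph^{\infty}$ is already fibrant (by \cite[Thm.~4.1(1)]{carranza-kapulkin:cubical-graphs}) and $\Graph^{1}\ito\Graph^{\infty}$ is a DK-equivalence, so \cref{cubical-cat-is-localization} applies with $\Graph^{\infty}$ itself serving as the fibrant replacement — no separate comparison step is needed.
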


\begin{proof}
  The cubical category $\Graph^1$ admits weak tensors (and cotensors) by $\cube{1}$ via $\cube{1} \otimes X = I_1 \mathop{\square} X$ (\cref{ex:self-enriched-tensor,ex:change-of-base-tensor}).
  The inclusion $\Graph^1 \ito \Graph^\infty$ is a weak equivalence of cubical categories, with the latter fibrant.
  The result now follows from \cref{cubical-cat-is-localization}.
\end{proof}

Since it was shown in \cite{carranza-kapulkin-kim} that $\cohnerve \pr{ \Graph^\infty}$ does not have pushouts, we can conclude that the homotopy equivalences of graphs are not part of a model structure on the category of graphs, thus answering in the negative a question from the AIM Workshop on ``Discrete and Combinatorial Homotopy Theory.'

\appendix
\renewcommand{\thesection}{\Alph{section}}

\begin{appendices}

\section{Cubical sets} \label{sec:cubical-sets}

In this section, we recall the definitions and results regarding the homotopy theory of cubical sets which are used elsewhere in the paper.
The most significant of these is the \emph{Grothendieck} model structure on cubical sets, its fibrant objects (i.e.~the \emph{cubical Kan complexes}), and the \emph{triangulation} Quillen equivalence (\cref{thm:cset-model-structure}).
Our primary reference is \cite{doherty-kapulkin-lindsey-sattler} --- other references on the topic include \cite{cisinski:presheaves},\cite{cisinski:elegant}, \cite{jardine:a-beginning}, and \cite{kapulkin-voevodsky}.

We begin by defining the cube category $\Box$.
The objects of $\Box$ are posets of the form $[1]^n = \{0 \leq 1\}^n$ and the maps are generated (inside the category of posets) under composition by the following four special classes:
\begin{itemize}
  \item \emph{faces} $\partial^n_{i,\varepsilon} \colon [1]^{n-1} \to [1]^n$ for $i = 1, \ldots , n$ and $\varepsilon = 0, 1$ given by:
  \[ \partial^n_{i,\varepsilon} (x_1, x_2, \ldots, x_{n-1}) = (x_1, x_2, \ldots, x_{i-1}, \varepsilon, x_i, \ldots, x_{n-1})\text{;}  \]
  \item \emph{degeneracies} $\sigma^n_i \colon [1]^n \to [1]^{n-1}$ for $i = 1, 2, \ldots, n$ given by:
  \[ \sigma^n_i ( x_1, x_2, \ldots, x_n) = (x_1, x_2, \ldots, x_{i-1}, x_{i+1}, \ldots, x_n)\text{;}  \]
  \item \emph{negative connections} $\gamma^n_{i,0} \colon [1]^n \to [1]^{n-1}$ for $i = 1, 2, \ldots, n-1$ given by:
  \[ \gamma^n_{i,0} (x_1, x_2, \ldots, x_n) = (x_1, x_2, \ldots, x_{i-1}, \max\{ x_i , x_{i+1}\}, x_{i+2}, \ldots, x_n) \text{.} \]
  \item \emph{positive connections} $\gamma^n_{i,1} \colon [1]^n \to [1]^{n-1}$ for $i = 1, 2, \ldots, n-1$ given by:
  \[ \gamma^n_{i,1} (x_1, x_2, \ldots, x_n) = (x_1, x_2, \ldots, x_{i-1}, \min\{ x_i , x_{i+1}\}, x_{i+2}, \ldots, x_n) \text{.} \]
\end{itemize}

These maps obey the following \emph{cubical identities}:

\[ \begin{array}{l l}
    \partial_{j, \varepsilon'} \partial_{i, \varepsilon} = \partial_{i+1, \varepsilon} \partial_{j, \varepsilon'} \quad \text{for } j \leq i; & 
    \sigma_j \partial_{i, \varepsilon} = \begin{cases}
        \partial_{i-1, \varepsilon} \sigma_j & \text{for } j < i; \\
        \id                                                       & \text{for } j = i; \\
        \partial_{i, \varepsilon} \sigma_{j-1} & \text{for } j > i;
    \end{cases} \\
    \sigma_i \sigma_j = \sigma_j \sigma_{i+1} \quad \text{for } j \leq i; &
    \gamma_{j,\varepsilon'} \gamma_{i,\varepsilon} = \begin{cases}
    \gamma_{i,\varepsilon} \gamma_{j+1,\varepsilon'} & \text{for } j > i; \\
    \gamma_{i,\varepsilon}\gamma_{i+1,\varepsilon} & \text{for } j = i, \varepsilon' = \varepsilon;
    \end{cases} \\
    \gamma_{j,\varepsilon'} \partial_{i, \varepsilon} = \begin{cases} 
        \partial_{i-1, \varepsilon} \gamma_{j,\varepsilon'}   & \text{for } j < i-1 \text{;} \\
        \id                                                         & \text{for } j = i-1, \, i, \, \varepsilon = \varepsilon' \text{;} \\
        \partial_{j, \varepsilon} \sigma_j         & \text{for } j = i-1, \, i, \, \varepsilon = 1-\varepsilon' \text{;} \\
        \partial_{i, \varepsilon} \gamma_{j-1,\varepsilon'} & \text{for } j > i;
    \end{cases} &
    \sigma_j \gamma_{i,\varepsilon} = \begin{cases}
        \gamma_{i-1,\varepsilon} \sigma_j  & \text{for } j < i \text{;} \\
        \sigma_i \sigma_i           & \text{for } j = i \text{;} \\
        \gamma_{i,\varepsilon} \sigma_{j+1} & \text{for } j > i \text{.} 
    \end{cases}
\end{array} \]

Our convention is to write cubical operators on the right e.g.~given an $n$-cube $x \in X_n$ of a cubical set $X$, we write $x\face{}{1,0}$ for the $\face{}{1,0}$-face of $x$. We will write $\cSet$ for the category of cubical sets.
\begin{definition} \leavevmode
    \begin{enumerate}
        \item For $n \geq 0$, the \emph{combinatorial $n$-cube} $\cube{n}$ is the representable functor $\boxcat(-, [1]^n) \from \boxcat^\op \to \Set$.
        \item The \emph{boundary of the $n$-cube} $\bd \cube{n}$ is the subobject of $\cube{n}$ defined by
        \[ \bd \cube{n} := \bigcup\limits_{\substack{j=1,\dots,n \\ \eta = 0, 1}} \operatorname{Im} \face{}{j,\eta}. \]
\item given $i = 1, \dots, n$ and $\eps = 0, 1$, the $(i, \eps)$-open box  $\dfobox$ is the subobject of $\bd \cube{n}$ defined by
        \[ \dfobox := \bigcup\limits_{(j,\eta) \neq (i,\varepsilon)} \operatorname{Im} \face{}{j,\eta}. \]
    \end{enumerate}
\end{definition}

Cubical sets are related to simplicial sets via the following adjunction.
\begin{definition} \label{def:geom-realization}
        Define a functor $\boxcat \to \sSet$ which sends $[1]^n$ to $(\simp{1})^{n}$.
        Left Kan extension along the Yoneda embedding gives the \emph{triangulation} functor 
        \[ \begin{tikzcd}[column sep = large]
            \boxcat \ar[r, "{[1]^n \mapsto (\simp{1})^n}"] \ar[d] & \sSet \\
            \cSet \ar[ur, "T"']
        \end{tikzcd} \]
        whose right adjoint $U \from \sSet \to \cSet$ is defined by
        \[ (UX)_n := \sSet \left( (\simp{1})^n, X \right) . \] 
\end{definition}
By construction, there is a natural isomorphism $\creali{ X } \cong \sreali{ TX }$ between the cubical geometric realization of $X$ and the simplicial geometric realization of $TX$.

Define a monoidal product $\uvar \gprod \uvar \from \boxcat \times \boxcat \to \boxcat$ on the cube category by $[1]^m \gprod [1]^n = [1]^{m+n}$.
Postcomposing with the Yoneda embedding and left Kan extending gives the \emph{geometric product} of cubical sets.
\[ \begin{tikzcd}
    \boxcat \times \boxcat \ar[r, "\gprod"] \ar[d] & \boxcat \ar[r] & \cSet \\
    \cSet \times \cSet \ar[urr, "\gprod"']
\end{tikzcd} \]
This product is biclosed: for a cubical set $X$, we write $\underline{\cSet} (X, \uvar) \from \cSet \to \cSet$ for the right adjoint to the functor $\uvar \gprod X$.
We do not make use of the left-closed structure.

Moreover, the triangulation functor is strong monoidal, i.e.~it sends geometric products to products.
\begin{proposition} \leavevmode
    \begin{enumerate}
        \item The triangulation functor $T \from \cSet \to \sSet$ is strong monoidal.
        \item Its right adjoint $U \from \sSet \to \cSet$ is lax monoidal.
    \end{enumerate}
\end{proposition}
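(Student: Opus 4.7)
The plan is to prove (1) by a cocontinuity argument reducing to representables, and then obtain (2) formally from (1) via the standard doctrinal adjunction principle.

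For part (1), note that both bifunctors $T(-\otimes -)$ and $T(-)\times T(-)$ from $\cSet\times \cSet$ to $\sSet$ preserve colimits in each variable separately. Indeed, the geometric product on $\cSet$ is cocontinuous in each variable by construction (it is defined as the left Kan extension along the Yoneda embedding of the product on $\boxcat$), the triangulation $T$ is cocontinuous as a left adjoint, and binary cartesian products in $\sSet$ are cocontinuous in each variable because $\sSet$ is a cartesian closed presheaf category. Since every cubical set is canonically a colimit of representables, it therefore suffices to exhibit a natural isomorphism on representables. This is a direct computation:
\[
T(\cube{m}\otimes \cube{n})=T(\cube{m+n})\cong (\simp{1})^{m+n}\cong (\simp{1})^m\times (\simp{1})^n\cong T(\cube{m})\times T(\cube{n}),
\]
and the unit structure map $T(\cube{0})\cong \simp{0}$ identifies with the canonical isomorphism to the terminal simplicial set. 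The associator and unitor coherence axioms then reduce to the corresponding coherences of the cartesian monoidal structure on $\sSet$ restricted to powers of $\simp{1}$, which hold tautologically.

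For part (2), the claim that a right adjoint of a strong monoidal functor is canonically lax monoidal is a standard application of the mate correspondence (doctrinal adjunction). Explicitly, the lax structure maps are taken to be the adjuncts of
\[
T(UA\otimes UB)\xrightarrow{\;\sim\;} TU A\times TU B\xrightarrow{\eps\times \eps} A\times B,\qquad T(\cube{0})\xrightarrow{\;\sim\;} \simp{0},
\]
where $\eps$ denotes the counit of $T\dashv U$, and the first isomorphism is the strong monoidal structure from (1). The associativity and unitality coherences of this lax structure follow by chasing the mate diagrams, using the coherences from (1) together with the triangle identities of the adjunction.

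The main (minor) obstacle is not conceptual but bookkeeping: one must be careful that the cocontinuity argument correctly reduces all of the monoidal coherence squares, and not just the underlying natural isomorphism, to representables. In practice, because both sides are obtained from left Kan extensions of the same data on $\boxcat$, these coherences are forced, so the argument goes through without incident.
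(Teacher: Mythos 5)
Your proof is correct and follows essentially the same route as the paper: part (1) is verified on representables (with the cocontinuity reduction the paper leaves implicit), and part (2) is deduced formally from (1) via doctrinal adjunction, with the same mate-induced structure maps the paper later uses explicitly.
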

\begin{proof}
    For (1), it suffices to check on representables, on which it follows by definition as
    \begin{align*}
        T(\cube{m} \gprod \cube{n}) &= T(\cube{m+n}) \\
        &= (\simp{1})^{m+n} \\
        &= (\simp{1})^{m} \times (\simp{1})^n \\
        &= T(\cube{m}) \times T(\cube{n}).
    \end{align*}
    Item (2) follows formally from item (1).
\end{proof}

The following result gives an explicit description of cubes in the geometric product, which we use implicitly throughout \cref{sec:cubical-categories}
\begin{proposition}[{\cite[Prop.~1.24]{doherty-kapulkin-lindsey-sattler}}] \label{thm:gprod_cube}
    Let $X, Y$ be cubical sets.
    \begin{enumerate}
        \item For $k \geq 0$, the $k$-cubes of $X \gprod Y$ consist of all pairs $(x \in X_m, y \in Y_n)$ such that $m + n = k$, subject to the identification $(x\degen{}{m+1}, y) = (x, y\degen{}{1})$.
        \item For $x \in X_m$ and $y \in Y_n$, the faces, degeneracies, and connections of the $(m+n)$-cube $(x, y)$ are computed by
        \begin{align*}
            (x, y)\face{}{i,\varepsilon} &= \begin{cases}
                (x\face{}{i,\varepsilon}, y) & 1 \leq i \leq m \\
                (x, y\face{}{i-m, \varepsilon}) & m+1 \leq i \leq m+n;
            \end{cases} \\
            (x, y)\degen{}{i} &= \begin{cases}
                (x\degen{}{i}, y) & 1 \leq i \leq m+1 \\
                (x, y\degen{}{i-m}) & m+1 \leq i \leq m+n;
            \end{cases} \\
            (x, y)\conn{}{i,\varepsilon} &= \begin{cases}
                (x\conn{}{i,\varepsilon}, y) & 1 \leq i \leq m \\
                (x, y\conn{}{i-m,\varepsilon}) & m+1 \leq i \leq n.
            \end{cases}
        \end{align*} \qed
    \end{enumerate}
\end{proposition}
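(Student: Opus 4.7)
The proof I propose is essentially an unwinding of the definition of the geometric product $\gprod$ as the Day convolution on $\cSet$, together with a careful bookkeeping of morphisms in the cube category. First, I would invoke the coend presentation of Day convolution: since $X \gprod Y$ is the left Kan extension along the Yoneda embedding of $\gprod \from \boxcat \times \boxcat \to \cSet$ (with $\cube{m} \gprod \cube{n} = \cube{m+n}$), we obtain
\[ (X \gprod Y)_k = \int^{m, n \in \boxcat} X_m \times Y_n \times \boxcat([1]^k, [1]^{m+n}), \]
which presents $(X \gprod Y)_k$ as a quotient of $\bigsqcup_{m, n \geq 0} X_m \times Y_n \times \boxcat([1]^k, [1]^{m+n})$ by the relations generated by $[x \cdot \alpha, y, f] \sim [x, y, (\alpha \gprod \id) \circ f]$ and $[x, y \cdot \beta, f] \sim [x, y, (\id \gprod \beta) \circ f]$ for $\alpha, \beta$ in $\boxcat$.

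Next I would show every class admits a representative of the form $[x, y, \id]$ with $\dim x + \dim y = k$. The plan is to decompose $f \from [1]^k \to [1]^{m+n}$ as a composite of basic cube operators and push each through the coend relation. Operators acting purely on coordinates $\leq m$ are of the form $\alpha \gprod \id$ and absorb into $x$; operators acting purely on coordinates $> m$ similarly absorb into $y$. This fully handles faces and degeneracies. For connections $\conn{}{i, \varepsilon}$ with $i < m$ or $i > m$ the argument goes through unchanged; the only subtlety is the straddling connection $\conn{}{m, \varepsilon}$, which one rewrites by pre- or post-composing with an appropriate face or degeneracy and invoking the cubical identity $\gamma_{i,\varepsilon} \partial_{i, \varepsilon} = \id$ (or $\sigma_i \gamma_{i,\varepsilon} = \sigma_i \sigma_i$) to exchange it with a tensor-split composite.

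Once every class has a canonical representative $[x, y, \id]$, the only surviving coend relation on such representatives is $[x \cdot \degen{}{m+1}, y, \id] = [x, y \cdot \degen{}{1}, \id]$, which records the single identity $\degen{}{m+1} \gprod \id = \id \gprod \degen{}{1}$ of maps $[1]^{m+n+1} \to [1]^{m+n}$ in $\boxcat$. This is exactly the identification $(x\degen{}{m+1}, y) = (x, y\degen{}{1})$ of statement (1). The formulas of (2) are then a direct computation: precomposing the representative $\id$ with a generator $\face{}{i, \varepsilon}$, $\degen{}{i}$, or $\conn{}{i, \varepsilon}$ and pushing through the coend relation places the operator on the $X$-side when $i$ lies in the $X$-range and on the $Y$-side (with a coordinate shift $i \mapsto i-m$) when $i$ lies in the $Y$-range; well-definedness modulo the identification of (1) follows from the cubical identities $\face{}{i, \varepsilon} \degen{}{j}$, $\degen{}{i} \degen{}{j}$, and $\conn{}{i, \varepsilon} \degen{}{j}$.

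The main obstacle, and essentially the only nontrivial point in the argument, is the handling of straddling connections in the reduction step. Everything else is routine bookkeeping: the coend presentation is formal, the splitting of non-straddling operators is immediate, and the cubical identity checks for well-definedness and functoriality are finite case analyses. I expect the straddling-connection case to require the most careful writing, but it remains a finitary combinatorial argument rather than a conceptual difficulty.
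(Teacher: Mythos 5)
There is nothing in the paper to compare against: this proposition is quoted from \cite{doherty-kapulkin-lindsey-sattler}*{Prop.~1.24} and stamped with a \qed, so the paper offers no proof of its own. Your proposal is the standard direct argument (Day convolution coend, reduction to canonical representatives $[x,y,\id]$, identification of the surviving relation), and in outline it is sound; but two points deserve correction before it would count as a complete proof.

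First, the difficulty you single out is not where the work is. When you peel generators off the codomain side of $f \from [1]^k \to [1]^{m+n}$, every generator with codomain $[1]^{m+n}$ already splits: $\face{}{i,\varepsilon}$ and $\conn{}{i,\varepsilon}$ split as $(\uvar)\gprod\id$ for $i \leq m$ and as $\id\gprod(\uvar)$ for $i \geq m+1$, and this includes the case $i=m$ of a connection, since the two source coordinates $m,m+1$ that it merges both feed into output coordinate $m$, which lies in the $X$-block; there is no straddling case, and the detour through $\conn{}{i,\varepsilon}\face{}{i,\varepsilon}=\id$ is unnecessary. The only generator admitting two splittings is $\degen{}{m+1}=\degen{}{m+1}\gprod\id=\id\gprod\degen{}{1}$, which is exactly the source of the identification in (1). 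Second, the step you dispatch in one clause --- ``the only surviving coend relation on canonical representatives is the degeneracy interchange'' --- is the one that actually needs an argument: two canonical triples could a priori be linked by a zig-zag of generating coend relations passing through non-canonical triples, and inspecting single generators does not rule this out. The clean way to close it is implicit in your own final paragraph: define the candidate cubical set $Z$ whose $k$-cubes are pairs modulo the interchange identification, with structure maps given by the formulas in (2); check well-definedness and the cubical identities (a finite case analysis); and then verify that $(x,y)\mapsto[x,y,\id]$ and $[x,y,f]\mapsto(x,y)\cdot f$ are mutually inverse between $Z$ and the coend, your peeling argument supplying one of the two composites. With that restructuring your argument is complete and agrees with the computation cited from \cite{doherty-kapulkin-lindsey-sattler}.
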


The category of cubical sets carries the \emph{Grothendieck} model structure (also known as the \emph{Cisinski} model structure), which models the homotopy theory of spaces.
\begin{theorem}[Cisinski, cf.~{\cite[Thms.~1.34 \& 6.26]{doherty-kapulkin-lindsey-sattler}}] \label{thm:cset-model-structure}
    The category of cubical sets carries a model structure where
    \begin{itemize}
        \item cofibrations are monomorphisms;
        \item fibrations are maps with the right lifting property against all open box inclusions, i.e.~the set
        \[ \left\{ \dfobox \ito \cube{n} \mid \begin{array}{l}
            n \geq 1 \\
            i = 1, \dots, n \\
            \varepsilon = 0, 1
        \end{array} \right\} ; \]
        and
        \item weak equivalences are created by triangulation, i.e.~a map $f \from X \to Y$ is a weak equivalence if $Tf \from TX \to TY$ is a weak equivalence in the Kan--Quillen model structure.
    \end{itemize}
    Moreover, the triangulation adjunction
    \[ \begin{tikzcd}
        \adjunct{T}{U}{\cSet}{\sSet}
    \end{tikzcd} \] 
    gives a Quillen equivalence with the Kan--Quillen model structure on simplicial sets. \qed
\end{theorem}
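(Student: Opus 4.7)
The plan is to construct the model structure by applying Cisinski's theory of anodyne extensions on presheaf categories, and then to upgrade the resulting Quillen adjunction with the Kan--Quillen model structure on $\sSet$ to a Quillen equivalence by combinatorial analysis of the triangulation functor $T$.

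I would take monomorphisms as the cofibrations. Using the structure of $\boxcat$ (every morphism factors as a split epi followed by a face monomorphism), one verifies that every monomorphism is built from boundary inclusions $\bd\cube{n} \hookrightarrow \cube{n}$ via a transfinite skeletal filtration, so these generate the cofibrations. For the trivial cofibrations, I would take the saturation of the open box inclusions $\dfobox \hookrightarrow \cube{n}$. The first substantive technical step is to verify that this saturation is stable under pushout--product with arbitrary monomorphisms --- a cubical pushout--product lemma --- via direct combinatorial manipulation using the explicit description of $\gprod$ from \cref{thm:gprod_cube}. Defining weak equivalences as maps $f$ whose triangulation $Tf$ is a Kan--Quillen weak equivalence, the small object argument then produces the two weak factorization systems; the remaining axioms follow from a mix of formal arguments (2-out-of-3 and retract closure descend from $\sSet$ through $T$) together with the identification of trivial fibrations (those with RLP against $\bd\cube{n} \hookrightarrow \cube{n}$) with fibrations that are weak equivalences, which in turn rests on the pushout--product result and a fibrant replacement argument.

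For the Quillen equivalence, I would check $T \dashv U$ is Quillen by verifying $T$ preserves monomorphisms (immediate from its definition on representables, using that it is left Kan extended) and that $T$ sends each open box inclusion $\dfobox \hookrightarrow \cube{n}$ to a Kan--Quillen trivial cofibration. The latter is the main combinatorial challenge: $T(\cube{n}) \cong (\simp{1})^n$ is a prism, and one must exhibit an explicit filtration of $T(\cube{n})$ by horn attachments witnessing that $T(\dfobox) \hookrightarrow T(\cube{n})$ is anodyne. Once this is established, the weak equivalences are by definition created by $T$, so the derived unit and counit being weak equivalences can be checked on generators: using that every cubical set is cofibrant, cell attachment reduces the unit $X \to UTX$ to showing $\cube{n} \to U((\simp{1})^n)$ is a weak equivalence (both source and target are weakly contractible), and the counit for fibrant $X$ reduces similarly using that $T$ is fully faithful enough on the simplex subcategory to recognize Kan complexes.

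The hardest step is expected to be the combinatorial analysis showing that triangulation sends open box inclusions to anodyne extensions in $\sSet$, together with the closely related cubical pushout--product lemma. Both require careful tracking of how $(\simp{1})^n$ decomposes into simplices and of how cubical faces and connections correspond to simplicial horn fillings; this is where the specific geometry of $\boxcat$ --- in particular the presence of both positive and negative connections, which provide the cubical analogue of degenerate simplices needed to fill horns --- enters essentially, distinguishing this theory from the simplicial one and from cubical variants without connections.
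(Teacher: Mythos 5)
The paper does not actually prove this statement: it is quoted from the literature (Cisinski, via the sources cited in the statement), so there is no internal proof to compare yours against. Judged on its own terms, your outline follows the standard route, but it has genuine gaps located exactly at the steps you treat as formal. First, with weak equivalences created by $T$ and fibrations defined by the right lifting property against open box inclusions, your assertion that the saturation of the open box inclusions is the class of trivial cofibrations --- equivalently, that every map with RLP against open boxes which is also a weak equivalence has RLP against all monomorphisms --- is the cubical analogue of Quillen's theorem that anodyne maps coincide with trivial cofibrations in $\sSet$. It does not follow from the pushout--product lemma together with ``a fibrant replacement argument''; in the simplicial case this step needs minimal fibrations or an equivalent device, and in the cubical case the literature obtains it from Cisinski's test-category machinery or from equivalence-extension-property arguments in the style of Sattler. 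Your sketch names the combinatorial lemmas (pushout--product stability, and $T(\dfobox \ito \cube{n})$ being anodyne) as the hardest steps, but those are comparatively routine; the real difficulty sits in the step you dismiss as bookkeeping.

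Second, the Quillen-equivalence argument does not go through as written. The proposed reduction of the unit $X \to UTX$ to cells fails because $U$ is a right adjoint and does not preserve the pushouts and transfinite compositions of a skeletal filtration, so knowing that $\cube{n} \to U\big((\simp{1})^n\big)$ is a weak equivalence cannot be assembled cell by cell into a statement about $UTX$. The correct reduction, since every cubical set is cofibrant and $T$ creates weak equivalences, is via the triangle identities: the adjunction is a Quillen equivalence if and only if the counit $TUS \to S$ is a Kan--Quillen weak equivalence for every Kan complex $S$. That single statement carries essentially all the homotopy-theoretic content of the theorem, and your proposal offers no workable argument for it (``$T$ is fully faithful enough on the simplex subcategory to recognize Kan complexes'' is not one); in the cited sources it rests on Grothendieck--Cisinski test category theory or a comparably substantial comparison. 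A minor further point: connections are not what separates this theorem from the connection-free setting --- plain cubical sets also carry a model structure Quillen equivalent to $\sSet$ --- although having a connection does streamline several of the combinatorial arguments.
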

\begin{definition}
	A \emph{cubical Kan complexes} (or simply \emph{Kan complex}) is a cubical set with the right lifting property with respect to all open box inclusions. We write $\cKan$ and $\sKan$ for the categories of cubical Kan complexes and simplicial Kan complexes, respectively.
\end{definition}

\section{Cubical categories} \label{sec:cubical-categories}

The main goal of this section is to establish an analog of the Bergner model structure for cubical categories (\cref{model-str-ccat}), its equivalence with the the Joyal model structure for quasicategories (\cref{str-cohnerve-quillen-equiv}), and an identification of the mapping spaces in a cubical category with the mapping spaces of its associated quasicategory (\cref{repr-ex-infty-t-commute}).

We recall some basic results about enriched categories, of which cubical and simplicial categories are examples.
\begin{definition}
    Let $(\cat{V}, \otimes, 1)$ be a monoidal category.
    A \emph{$\cat{V}$-enriched category} $\cubmc{C}$ (or a \emph{category enriched in $(\cat{V}, \otimes, 1)$}) consists of
    \begin{itemize}
        \item a class of objects $\ob \cubmc{C}$;
        \item for $X, Y \in \ob \cubmc{C}$, a \emph{hom-object} $\cubmc{C}(X, Y) \in \cat{V}$;
        \item for $X, Y, Z \in \ob \cubmc{C}$, a \emph{composition} morphism
        \[ \circ \from \cubmc{C}(Y, Z) \gprod \cubmc{C}(X, Y) \to \cubmc{C}(X, Z); \]
        and
        \item for $X \in \ob \cubmc{C}$, an \emph{identity element} morphism $\eid[X] \from 1 \to \cubmc{C}(X, X)$, 
    \end{itemize}
    such that, for all $X, Y, Z, W \in \ob \cubmc{C}$, the diagrams
    \[ \begin{tikzcd}[column sep = large]
            \cat{C}(Z, W) \otimes \cat{C}(Y, Z) \otimes \cat{C}(X, Y) \ar[r, "{\cat{C}(Z, W) \otimes \mathord{\circ}}"] \ar[d, "{\mathord{\circ} \otimes \cat{C}(X, Y)}"'] & \cat{C}(Z, W) \otimes \cat{C}(X, Z) \ar[d, "\circ"] \\
            \cat{C}(Y, W) \otimes \cat{C}(X, Y) \ar[r, "\circ"] & \cat{C}(X, W)
    \end{tikzcd} \]
    \[ \begin{tikzcd}[column sep = 4.5em]
        \cat{C}(X, Y) \ar[r, "{\cat{C}(X, Y) \otimes \eid[X]}"] \ar[rd, "\id"'] & \cat{C}(X, Y) \otimes \cat{C}(X, X) \ar[d, "\circ"] \\
        {} & \cat{C}(X, Y)
    \end{tikzcd} \qquad \begin{tikzcd}[column sep = 4.5em]
        \cat{C}(Y, Y) \otimes \cat{C}(X, Y) \ar[d, "\circ"'] & \cat{C}(X, Y) \ar[l, "{\eid[Y] \otimes \cat{C}(X, Y)}"'] \ar[ld, "{\id}"] \\
        \cat{C}(X, Y)
    \end{tikzcd} \]
    commute.
\end{definition}
\begin{example}[{\cite[Thms.~I.5.2 \& II.6.4]{eilenberg-kelly:closed-categories}}] \label{ex:closed-cat-is-enriched}
    If $(\cat{V}, \otimes, 1)$ is right closed then, writing $\underline{\cat{V}}(X, -) \from \cat{V} \to \cat{V}$ for the right adjoint to the functor $- \otimes X$, the category $\cat{V}$ ascends to a $\cat{V}$-enriched category where the hom-objects are given by $\underline{\cat{V}}(X, Y)$.
\end{example}
\begin{example}[{\cite[Prop.~II.6.3]{eilenberg-kelly:closed-categories}}] \label{ex:change-of-base}
    If $\Phi \from \cat{W} \to \cat{V}$ is a lax monoidal functor then any $\cat{W}$-enriched category $\cubmc{D}$ gives rise to a $\cat{V}$-category $\Phi_\bullet \cubmc{D}$ with the same objects.
    The hom-object from $X$ to $Y$ is given by $\Phi(\cubmc{D}(X, Y))$.
\end{example}
\begin{definition}
    Let $(\cat{V}, \otimes, 1)$ be a monoidal category.
    A \emph{$\cat{V}$-enriched functor} $F \from \cubmc{C} \to \cubmc{D}$ between $\cat{V}$-enriched categories consists of
    \begin{itemize}
        \item an assignment on objects $\ob \cubmc{C} \to \ob \cubmc{D}$;
        \item for $X, Y \in \ob \cubmc{C}$, a morphism
        \[ F_{X, Y} \from \cubmc{C}(X, Y) \to \cubmc{D}(FX, FY) \]
        in $\cat{V}$
    \end{itemize}
    such that, for $X, Y, Z \in \ob \cubmc{C}$, the diagrams
    \[ \begin{tikzcd}
        \cubmc{C}(Y, Z) \gprod \cubmc{C}(X, Y) \ar[r, "\circ"] \ar[d, "{F_{Y, Z} \gprod F_{X, Y}}"'] & \cubmc{C}(X, Z) \ar[d, "F_{X, Z}"] \\
        \cubmc{D}(FY, FZ) \gprod \cubmc{D}(FX, FY) \ar[r, "\circ"] & \cubmc{D}(FX, FZ)
    \end{tikzcd} \qquad \begin{tikzcd}
        \cube{0} \ar[r, "{\eid[X]}"] \ar[rd, "{\eid[FX]}"'] & \cubmc{C}(X, X) \ar[d, "F_{X, X}"] \\
        {} & \cubmc{D}(FX, FX)
    \end{tikzcd} \]
    commute.
\end{definition}
We write $\Cat_{\cat{V}}$ for the category of $\cat{V}$-enriched categories and $\cat{V}$-enriched functors.
\begin{example}[{\cite[Thms.~I.6.6 \& II.6.4]{eilenberg-kelly:closed-categories}}] \label{ex:monoidal-functor-enriched-ascend}
	Any lax monoidal functor $\Phi \from \cat{W} \to \cat{V}$ between right closed monoidal categories ascends to a $\cat{V}$-enriched functor
    \[ \overline{\Phi} \from \Phi_\bullet \cat{W} \to \cat{V} \]
    which sends an object $x \in \Phi_\bullet \cat{W}$ to $\Phi x \in \cat{V}$.
    The action $\overline{\Phi}_{X, Y} \from \Phi \cat{W}(X, Y) \to \cat{V}(\Phi X, \Phi Y)$ on hom-objects is adjoint transpose to the map
    \[ \Phi (\cat{W}(X, Y)) \otimes \Phi X \to \Phi (\cat{W}(X, Y) \otimes X) \xrightarrow{\Phi (\mathrm{eval}_Y)} \Phi Y. \]
\end{example}
\begin{example}[{\cite[Prop.~II.6.3, Thm.~II.6.5]{eilenberg-kelly:closed-categories}}] \label{ex:change-of-base-functorial}
    The construction given in \cref{ex:change-of-base} is functorial, i.e.~any lax monoidal functor $\Phi \from \cat{W} \to \cat{V}$ induces a functor
    \[ \Phi_\bullet \from \Cat_{\cat{W}} \to \Cat_{\cat{V}}. \]
    In particular, any $\cat{W}$-enriched functor $\cubmc{C} \to \cubmc{D}$ gives rise to a $\cat{V}$-enriched functor $\Phi_\bullet \cubmc{C} \to \Phi_\bullet \cubmc{D}$.

    Given another lax monoidal functor $\Psi \from \cat{U} \to \cat{W}$, the functor $(\Phi \Psi)_\bullet$ is equal to $\Phi_\bullet \Psi_\bullet$.
\end{example}
\begin{definition} \label{def:cubical-nat-transf}
    Given $\cat{V}$-enriched functors $F, G \from \cubmc{C} \to \cubmc{D}$, a \emph{$\cat{V}$-enriched natural transformation} $\alpha$ from $F$ to $G$ consists of, for each object $X \in \cubmc{C}$, a morphism $\alpha_X \from 1 \to \cubmc{D}(FX, GX)$ in $\cat{V}$ such that, for $X, Y \in \cubmc{C}$, the square
    \[ \begin{tikzcd}[sep = large]
        \cubmc{C}(X, Y) \ar[rr, "F_{X, Y}"] \ar[dd, "G_{X, Y}"'] & {} & \cubmc{D}(FX, FY) \ar[d, "{\alpha_Y \gprod \id}"] \\
        {} & {} & \cubmc{D}(FY, GY) \gprod \cubmc{D}(FX, FY) \ar[d, "\circ"] \\
        \cubmc{D}(GX, GY) \ar[r, "{\id \gprod \alpha_X}"] & {\cubmc{D}(GX, GY) \gprod \cubmc{D}(FX, GX)} \ar[r, "\circ"] & \cubmc{D}(FX, GY)
    \end{tikzcd} \]
    commutes.
\end{definition}
\begin{example}[{\cite[Prop.~II.6.3, Thm.~II.6.5]{eilenberg-kelly:closed-categories}}] \label{ex:monoidal-nat-transf-enriched}
	Any monoidal natural transformation $\eta$ between lax monoidal functors $\Phi, \Psi \from \cat{W} \to \cat{V}$ induces a natural transformation $\eta_\bullet$ from $\Phi_\bullet$ to $\Psi_\bullet$. 
\end{example}
\subsection*{Cubical and simplicial Kan complexes}

Our interest in enriched categories lies in studying cubical and simplicial categories, which we now define.
\begin{definition} \leavevmode
    \begin{itemize}
        \item A \emph{cubical category} is a category enriched in $(\cSet, \gprod, \cube{0})$.
        A \emph{cubical functor} is a $\cSet$-enriched functor between cubical categories.
        \item A \emph{simplicial category} is a category enriched in $(\sSet, \times, \simp{0})$.
        A \emph{simplicial functor} is an $\sSet$-enriched functor between simplicial categories.
    \end{itemize}
\end{definition}
Let $\cCat$ denote the category of cubical categories, and $\sCat$ denote the category of simplicial categories.

In a cubical or simplicial category, we refer to the hom-object from $X$ to $Y$ as the \emph{mapping space} from $X$ to $Y$.
Following \cref{ex:closed-cat-is-enriched}, we write $\cSet$ for the cubical category of cubical sets with mapping spaces given by $\underline{\cSet}(X, Y)$.
Likewise, $\sSet$ denotes the simplicial category of simplicial sets.
We write $\cKan$ for the full cubical subcategory of $\cSet$ spanned by cubical Kan complexes, and $\sKan$ for the full simplicial subcategory of $\sSet$ spanned by simplicial Kan complexes.

Following \cref{ex:change-of-base-functorial}, the triangulation adjunction $\adjunct{T}{U}{\cSet}{\sSet}$ induces an adjunction
\[ \adjunct{T_\bullet}{U_\bullet}{\cCat}{\sCat}. \]

Simplicial and cubical categories which are locally Kan give examples of quasi-categories via their \emph{homotopy-coherent nerve}.
This construction is defined via pre-composition with a cosimplicial object (in $\cCat$ or $\sCat$, respectively), which we shall define.
\begin{remark}
    Our primary reference for results regarding the cubical homotopy-coherent nerve is \cite{kapulkin-voevodsky}.
    Although this reference is written using the categorical product on cubical sets as the base for enrichment (as opposed to the geometric product), any results used apply to both products.
\end{remark}


For $n \geq 0$, define a cubical category $\str [n]$ as follows:
\begin{itemize}
    \item the set of objects of $\str [n]$ is $\{ 0, \dots, n \}$;
    \item for $i, j \in \{ 0, \dots, n \}$, the mapping space from $i$ to $j$ is defined by
    \[ \str[n](i, j) := \begin{cases}
        \cube{j - i - 1} & i < j \\
        \{ \id[i] \} & i = j \\
        \varnothing & i > j.
    \end{cases} \]
    \item for $i, j, k \in \{ 0, \dots, n \}$, the composition map when $i < j < k$ is given by
	    \[  \cube{k - j - 1} \gprod \cube{j - i - 1} \cong \cube{k-i-2} \xrightarrow{\face{}{j,1}} \cube{k-i-1}. \]
    If $i = j$ or $j = k$ then composition is defined to be the natural isomorphisms
    \[ \str[n](j, k) \gprod \{ \id[j] \} \xrightarrow{\cong} \str[n](i, k) \xleftarrow{\cong} \{ \id[j] \} \gprod \str[n](i, j), \]
    respectively.
    Otherwise, the composition map is trivial as the domain becomes the empty cubical set.
\end{itemize}
We omit definitions of the simplicial operators of $\str [n]$, which may be found in \cite[Lem.~2.2]{kapulkin-voevodsky}.
Extension by colimits yields a functor $\str \from \sSet \to \cCat$.
The \emph{cubical homotopy-coherent nerve} $\cohnerve \from \cCat \to \sSet$ is its right adjoint, which is defined by
\[ (\cohnerve \cubmc{C})_n := \cCat(\str[n], \cubmc{C}). \]
The simplicial homotopy-coherent nerve $\cohnerve[\Delta] \from \sCat \to \sSet$ is defined by
\[ (\cohnerve[\Delta] \cubmc{D})_n := \cCat(T_\bullet \str[n], \cubmc{D}). \]
In particular, there is a natural isomorphism $\cohnerve[\Delta] \cong \cohnerve \circ U_\bullet$ (\cite[Prop.~2.8]{kapulkin-voevodsky}).

\Cref{explicit-description-of-cubical-str} provides an explicit description of the functor $\str \from \sSet \to \cCat$.
Similar descriptions may be found in \cite[Rem.~5.2]{rivera-zeinalian:cubical-rigidification}, and in the simplicial case in \cite[Thm.~16.4.1]{riehl:categorical-homotopy-theory} and \cite[Cor.~4.4]{dugger-spivak:rigidification} for the functor $\str_{\Delta} \from \sSet \to \sCat$.
\begin{remark} \label{explicit-description-of-cubical-str}
    For a simplicial set $X$, the objects of $\str X$ are the vertices of $X$. Given vertices $x, y$ of $X$, the $k$-cubes of the mapping space $\str X(x, y)$ are finite tuples of pairs
\[ \left( (s_n, f_n), \dots, (s_1, f_1) \right) \]
where
\begin{itemize}
    \item each $s_i$ is a non-degenerate $(m_i)$-simplex of $X$ for some $m_i \geq 1$ such that
    \begin{itemize}
        \item $\restr{s_1}{\{ 0 \}} = x$ and $\restr{s_n}{\{ m_n \}} = y$; and
        \item for $i = 1, \dots, n-1$, $\restr{s_i}{\{ m_i \}} = \restr{s_{i+1}}{\{ 0 \}}$;
    \end{itemize}
    \item each $f_i$ is a $(d_i)$-cube of the mapping space $\str [m_i](0, m_i) \cong \cube{m_i - 1}$ for some $d_i$, where $d_1 + \dots + d_n = k$,
\end{itemize}
subject to the equivalence relation generated by
\begin{itemize}
    \item for $i = 1, \dots, n$ and $j = 1, \dots, m_i$, the relation
    \begin{align*}
        &\left(  (s_n, f_n), \dots, (s_i \face{}{j}, f_i), \dots, (s_1, f_1) \right) \sim \left( (s_n, f_n), \dots, (s_i, \str(\face{}{j})_{0, m_i - 1} \circ f_i), \dots, (s_1, f_1) \right);
    \end{align*}
    and
    \item for $i = 1, \dots, n-1$, the relation
    \begin{align*}
        &\left(  (s_n, f_n), \dots, (s_{i+1}, f_{i+1}), (s_i, f_i \degen{}{m_i + 1}), \dots, (s_1, f_1) \right) \\
        &\quad \sim \left( (s_n, f_n), \dots, (s_{i+1}, f_{i+1} \degen{}{1}), (s_i, f_i), \dots, (s_1, f_1) \right).
    \end{align*} 
\end{itemize}
If $x = y$ then we identify the empty tuple with the 0-cube $\id[x] \in \str X(x, x)$.
Otherwise, we consider all tuples to be non-empty.
Composition is given by concatenation of tuples, hence if each $d_i = 0$ for all $i$ (that is, each $f_i$ is a 0-cube) then
\[ [\left( (s_n, f_n), \dots, (s_1, f_1) \right)] = [(s_n, f_n)] \circ \dots \circ [(s_1, f_1)] \]
in $\pi_0 (\str(X)(x, y))$.

If $X$ is the homotopy-coherent nerve of a cubical category $\cubmc{C}$, an $m_i$-simplex $s_i$ of $X = \cohnerve \cubmc{C}$ is a cubical functor $s_i \from \str [m_i] \to \cubmc{C}$.
With this, the counit map $(\varepsilon_{\cubmc{C}})_{x, y} \from \str \cohnerve \cubmc{C}(x, y) \to \cubmc{C}(x, y)$ has an explicit formula
\[ \left( (s_n, f_n), \dots, (s_1, f_1) \right) \mapsto (s_n f_n) \circ \dots \circ (s_1 f_1), \]
where $s_i f_i$ denotes the composite
\[ \cube{d_i} \xrightarrow{f_i} \str [m_i](0, m_i) \xrightarrow{(s_i)_{0, m_i}} \cubmc{C}(s_i(0), s_i(m_i)). \]
\end{remark}

We move towards developing the homotopy theory of cubical categories, and its connection to the homotopy theory of simplicial categories.
\begin{definition}
    \leavevmode
    \begin{enumerate}
        \item A cubical category $\cubmc{C}$ is \emph{locally Kan} if for all $X, Y \in \ob \cubmc{C}$, the mapping space from $X$ to $Y$ is a cubical Kan complex.
        \item A simplicial category $\cubmc{D}$ is \emph{locally Kan} if for all $X, Y \in \ob \cubmc{C}$, the mapping space from $X$ to $Y$ is a simplicial Kan complex.
    \end{enumerate}
\end{definition}
\begin{proposition} \label{locally-kan-to-qcat}
    \leavevmode
    \begin{enumerate}
        \item If $\cubmc{C}$ is a locally Kan cubical category then $\cohnerve \cubmc{C}$ is a quasicategory.
        \item If $\cubmc{D}$ is a locally Kan simplicial category then $\cohnerve[\Delta] \cubmc{D}$ is a quasicategory.
    \end{enumerate}
\end{proposition}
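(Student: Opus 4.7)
The plan is to prove item (1) first and deduce (2) from it. For the reduction, observe that the right adjoint $U \from \sSet \to \cSet$ of triangulation is right Quillen by \cref{thm:cset-model-structure}, so it sends simplicial Kan complexes to cubical Kan complexes. Therefore the change-of-base functor $U_\bullet \from \sCat \to \cCat$ sends locally Kan simplicial categories to locally Kan cubical categories, and the natural isomorphism $\cohnerve[\Delta] \iso \cohnerve \circ U_\bullet$ recorded earlier then reduces (2) to (1) applied to $U_\bullet \cubmc{D}$.

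For (1), the goal is to show that $\cohnerve \cubmc{C}$ has the right lifting property against every inner horn inclusion $\horn{n}{k} \ito \simp{n}$ with $0 < k < n$. By the adjunction $\str \adj \cohnerve$, this is equivalent to extending every cubical functor $F \from \str \horn{n}{k} \to \cubmc{C}$ along the inclusion $\str \horn{n}{k} \ito \str \simp{n}$, which is the identity on the common object set $\{0, 1, \dots, n\}$. The problem therefore reduces entirely to extending $F$ on mapping spaces.

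The key combinatorial claim, to be verified by direct computation using \cref{explicit-description-of-cubical-str}, is that for $0 < k < n$ the inclusion $\str \horn{n}{k} \ito \str \simp{n}$ is an isomorphism on every mapping space $(i, j) \neq (0, n)$, while the map $\str \horn{n}{k}(0, n) \ito \str \simp{n}(0, n) \iso \cube{n-1}$ is, up to a relabelling of coordinates, the open box inclusion $\obox{n-1}{k, 0} \ito \cube{n-1}$. The first half holds because every non-degenerate cube of $\str \simp{n}(i, j)$ with $(i, j) \neq (0, n)$ is supported on either $\partial_0 \simp{n-1}$ (when $i > 0$) or $\partial_n \simp{n-1}$ (when $j < n$), both of which lie inside $\horn{n}{k}$ for inner $k$. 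For the second half, the face $\partial_k \simp{n-1}$ is the unique proper subsimplex of $\simp{n}$ that contributes the face of $\cube{n-1}$ corresponding to chains from $0$ to $n$ avoiding vertex $k$, and the interior $(n-1)$-cube of $\cube{n-1}$ comes solely from the non-degenerate top simplex of $\simp{n}$, which no horn contains.

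Granting the claim, the extension problem becomes a cubical lifting problem of the open box inclusion $\obox{n-1}{k, 0} \ito \cube{n-1}$ against the cubical Kan complex $\cubmc{C}(F0, Fn)$, which has a solution by fibrancy; the enriched composition conditions are automatically satisfied, since the faces of $\cube{n-1}$ already encode all relevant composites and the genuinely new data lives only in the interior cube. The principal technical step is thus the identification $\str \horn{n}{k}(0, n) \iso \obox{n-1}{k, 0}$, which requires careful bookkeeping of how each face $\partial_\ell \simp{n-1}$ and each composite of intermediate morphisms contributes cubes to $\cube{n-1}$; the analogous simplicial identification is classical, and the cubical version follows the same combinatorial pattern using the explicit composition formula in the definition of $\str[n]$.
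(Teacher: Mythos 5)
Your argument is correct, but it is organized quite differently from the paper, whose entire proof consists of two citations: item (1) is quoted from Kapulkin--Voevodsky [Thm.~2.6] and item (2) from Lurie [Prop.~1.1.5.10]. What you do instead is (i) unfold the proof of (1) yourself --- transposing the inner-horn lifting problem across $\str \adj \cohnerve$, checking via \cref{explicit-description-of-cubical-str} that $\str \horn{n}{k} \ito \str \simp{n}$ is bijective on objects, an isomorphism on all mapping spaces except $(0,n)$, and on $(0,n)$ is the open box inclusion $\obox{n-1}{k,0} \ito \cube{n-1}$, so that local Kan-ness supplies the filler --- and (ii) deduce (2) from (1) using that $U$ is right Quillen (hence $U_\bullet$ preserves local Kan-ness) together with $\cohnerve[\Delta] \iso \cohnerve \circ U_\bullet$, rather than citing the classical simplicial statement independently. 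Both steps are sound: your combinatorial identification is exactly the content of the cited cubical result (and your bookkeeping is consistent with the paper's convention that composition in $\str[n]$ is $\face{}{j,1}$, which is why the missing face is $(k,0)$ and why all composites into the $(0,n)$ mapping space land in faces $(\ell,1)$ of the box, making the functoriality of the extension automatic, as you assert); and your reduction of the simplicial case to the cubical one is a legitimate alternative to Lurie's result, in the same spirit as the paper's own derivation of \cref{simplicial-cat-is-localization} from \cref{cubical-cat-is-localization}. The paper's approach buys brevity by outsourcing both items to the literature; yours buys a self-contained argument and exhibits (2) as a formal corollary of (1), at the cost of redoing the $\str$-computation that the reference already provides. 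The only blemishes are cosmetic: the subsimplices you invoke for $(i,j)\neq(0,n)$ should be written as the faces $\partial_0\simp{n}$ and $\partial_n\simp{n}$ of $\simp{n}$ (not $\simp{n-1}$), and the final identification of $\str\horn{n}{k}(0,n)$ deserves the one extra sentence that a length-one tuple from $0$ to $n$ in the horn must miss some vertex $\ell\neq k$ with $0<\ell<n$, hence lies in the face $(\ell,0)$, while longer tuples factor through an intermediate vertex and lie in a face $(\ell,1)$.
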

\begin{proof}
    Item (1) is \cite[Thm.~2.6]{kapulkin-voevodsky}.
    Item (2) is \cite[Prop.~1.1.5.10]{lurie:htt}.
\end{proof}
Locally Kan simplicial categories are the fibrant objects in the \emph{Bergner model structure} on simplicial categories \cite{bergner}.
The simplicial homotopy-coherent nerve is the right adjoint in a Quillen equivalence between the Bergner model structure and the Joyal model structure (\cite[Thm.~2.2.5.1]{lurie:htt}).
We move towards defining an analogous model structure on cubical categories which is Quillen equivalent to these (which we refer to as the \emph{cubical Bergner model structure}).
This will require some auxiliary definitions and results.

If $X$ is a cubical set, the set $\pi_0(X)$ of its \emph{path-components} is the quotient of the set of vertices of $X$ with respect to the equivalence relation generated by the relation of being connected by 1-cubes.
Equivalently, $\pi_0(X)$ is the colimit of $X$, regarded as a cubical object in $\Set$. The right adjoint of $\pi_0 \from \cSet \to \Set$ is denoted by $\Sk_0$.
The adjunction $\adjunct{\pi_0}{\Sk_0}{\cSet}{\Set}$ induces an adjunction
\[ \adjunct{(\pi_0)_\bullet}{(\Sk_0)_\bullet}{\cCat}{\Cat} \]
We write $\Ho \from \cCat \to \Cat$ for the functor $(\pi_0)_\bullet$ and refer to $\Ho \cubmc{C}$ as the \emph{homotopy category} of $\cubmc{C}$.
Its right adjoint $(\Sk_0)_\bullet$ is a full subcategory inclusion, hence any ordinary category may be viewed as a cubical category with discrete mapping spaces.
For an ordinary category $\cat{C}$, we often write the cubical category $(\Sk_0)_\bullet (\cat{C})$ as simply $\cat{C}$.

\begin{proposition} \label{ho-cat-equiv}
    For a cubical category $\cubmc{C}$, there is an isomorphism of categories
    \[ \Ho (\cubmc{C}) \cong \Ho (\cohnerve \cubmc{C}) \]
    natural in $\cubmc{C}$.
\end{proposition}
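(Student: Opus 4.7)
The plan is to use the defining adjunctions to produce a canonical comparison functor $\tau_1 \cohnerve \cubmc{C} \to \Ho \cubmc{C}$ and then verify directly that it is an isomorphism. First, I would identify $\cohnerve \circ (\Sk_0)_\bullet$ with the ordinary nerve $N \from \Cat \to \sSet$: an $n$-simplex of $\cohnerve (\Sk_0)_\bullet \cat{D}$ is a cubical functor $\str[n] \to (\Sk_0)_\bullet \cat{D}$, which by the $\Ho \dashv (\Sk_0)_\bullet$ adjunction is an ordinary functor $\Ho \str[n] \to \cat{D}$; and since each mapping space $\str[n](i, j) = \cube{j-i-1}$ is path-connected, $\Ho \str[n] = [n]$. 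Uniqueness of left adjoints then yields $\tau_1 \cong \Ho \circ \str$, so the counit $\varepsilon \from \str \cohnerve \cubmc{C} \to \cubmc{C}$ supplies a natural comparison map $\Ho \varepsilon \from \tau_1 \cohnerve \cubmc{C} \to \Ho \cubmc{C}$. Naturality in $\cubmc{C}$ is then automatic.

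To check that $\Ho \varepsilon$ is an isomorphism, I first observe that both categories have object set $\ob \cubmc{C}$, so it suffices to establish bijectivity on hom-sets. Morphisms in $\Ho \cubmc{C}$ from $X$ to $Y$ are path-components of $\cubmc{C}(X, Y)$, while morphisms in $\tau_1 \cohnerve \cubmc{C}$ from $X$ to $Y$ are generated by $1$-simplices of $\cohnerve \cubmc{C}$ (i.e.\ cubical functors $\str[1] \to \cubmc{C}$, equivalently $0$-cubes of $\cubmc{C}(X, Y)$) modulo the relations imposed by $2$-simplices. The comparison map sends a $0$-cube to its path-component, so surjectivity is immediate.

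For injectivity, suppose two $0$-cubes $f, g \in \cubmc{C}(X, Y)_0$ lie in the same path-component; by transitivity of the equivalence relation in $\tau_1$, it suffices to treat the case of a single $1$-cube $h$ from $f$ to $g$. I would construct a $2$-simplex $F \from \str[2] \to \cubmc{C}$ by setting $F(0) = F(1) = X$, $F(2) = Y$, $F_{01} = \id[X]$, $F_{12} = g$, and $F_{02} = h \from \cube{1} \to \cubmc{C}(X, Y)$. The only nontrivial composition axiom requires $F_{02} \circ \face{}{1,1} = F_{12} \circ F_{01}$, which holds because the $1$-endpoint of $h$ is $g$ and $g \circ \id[X] = g$. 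Reading off the faces as $d_2 F = \id[X]$, $d_0 F = g$, and $d_1 F = f$, the $2$-simplex $F$ witnesses the relation $[f] = [g] \circ [\id[X]] = [g]$ in $\tau_1 \cohnerve \cubmc{C}$.

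The main obstacle I anticipate is the correct identification of the endpoints of $\cube{1} = \str[2](0, 2)$ with the faces of the $2$-simplex. By the explicit construction of $\str$, the $1$-endpoint $\face{}{1,1}$ corresponds to the ``long'' path $0 \to 1 \to 2$ and is hence pinned down by the composition law to be $F_{12} \circ F_{01}$; consequently the $0$-endpoint $\face{}{1,0}$ corresponds to the coface $d_1 \from [1] \to [2]$ and yields the ``direct'' edge $f$. Once this convention is verified from the definition of $\str \from \Delta \to \cCat$ on coface maps, the construction of $F$ goes through without incident and the isomorphism follows.
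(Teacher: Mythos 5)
Your approach is essentially the paper's: both produce the obvious identity-on-objects, identity-on-generating-arrows comparison between $\Ho\cubmc{C}$ and $\Ho(\cohnerve\cubmc{C})$ and verify it on hom-sets via the same $2$-simplex construction (the paper builds two explicit functors $\Phi, \Psi$ and checks they are inverse; you build one functor and check bijectivity, which is the same content). Where you depart, to good effect, is in how the comparison map is produced: observing that $\cohnerve \circ (\Sk_0)_\bullet \cong N$ because $\Ho \str[n] = [n]$, and hence $\tau_1 \cong \Ho \circ \str$ by uniqueness of left adjoints, so the counit $\varepsilon$ supplies a \emph{natural} map for free. This is cleaner than the paper's direct construction and makes the naturality claim automatic. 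Your face-index bookkeeping at the end also correctly matches the convention implicit in the paper's $(\id_X, f, g, \alpha)$ description of $2$-simplices.

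There is one gap in the injectivity step that you should close. Morphisms of $\tau_1 \cohnerve\cubmc{C}(X, Y)$ are equivalence classes of \emph{composable strings} of $1$-simplices (possibly passing through intermediate objects), not just $1$-simplices from $X$ to $Y$. Your argument shows that two single $1$-simplices in the same path-component of $\cubmc{C}(X, Y)$ represent the same element of $\tau_1$, which does not by itself give injectivity of $\Ho\varepsilon$ on hom-sets. You need the additional observation that every morphism of $\tau_1 \cohnerve\cubmc{C}$ is represented by a single $1$-simplex, i.e.\ that $[g] \circ [f] = [g \circ f]$ in $\tau_1 \cohnerve\cubmc{C}$ whenever $g, f$ are composable $0$-cubes in $\cubmc{C}$. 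This follows from the $2$-simplex $\str[2] \to \cubmc{C}$ with $F_{01} = f$, $F_{12} = g$, $F_{02}$ the degenerate $1$-cube on $g \circ f$; it is exactly the step the paper's proof addresses by noting that ``$\Psi$ commutes with compositions.'' Once this is inserted, your injectivity argument (reduce a zig-zag of $1$-cubes to a chain of single $1$-cube comparisons, each witnessed by the $2$-simplex you build) completes the proof.
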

\begin{proof}
	Recall that, for a simplicial set $X$, its homotopy category $\Ho X$
is defined as the value of the left adjoint to the nerve functor $\nerve \from \Cat \to \sSet$.
Explicitly, it is the category generated by the 1-simplices $X$, subject
to the following relations: 
\begin{itemize}
    \item whenever there is a $2$-simplex in X 
    \[\begin{tikzcd}
        & y \\
        x && z,
        \arrow["g", from=1-2, to=2-3]
        \arrow["f", from=2-1, to=1-2]
        \arrow["h"', from=2-1, to=2-3]
    \end{tikzcd}\] 
    the formal composite $(g, f)$ of $g$ with $f$ is identified with $h$;
    \item every degenerate 1-simplex is identified with an identity morphism.
\end{itemize}
We will denote the equivalence class of a 1-simplex $f \in X_{1}$ in $\Ho X$ by $[f]$. 

Now by definition, a $2$-simplex of $\cohnerve \cubmc{C}$ is a cubical
functor $\str[2] \to \cubmc C$. Such a cubical functor corresponds
the data of:
\begin{itemize}
\item objects $X_{0},X_{1},X_{2} \in \cubmc{C}$;
\item morphisms $f_{ij} \from X_{i} \to X_{j}$ in $\cubmc C_{0}$; and
\item a 1-cube $\alpha \from f_{02} \to f_{12} \circ f_{01}$ in the mapping space $\cubmc C\pr{X_{0},X_{2}}$.
\end{itemize}
We depict this data as a diagram 
\[\begin{tikzcd}
	& {X_1} \\
	{X_0} && {X_2.}
	\arrow["{f_{12}}", from=1-2, to=2-3]
	\arrow["{f_{01}}", from=2-1, to=1-2]
	\arrow[""{name=0, anchor=center, inner sep=0}, "{f_{02}}"', from=2-1, to=2-3]
	\arrow["\alpha"', shorten=1.4ex, yshift=0.1ex, Rightarrow, from=0, to=1-2]
\end{tikzcd}\]
This means that we can define functors
\[
\Phi \from \Ho {\cubmc C} \to \Ho ({\cohnerve\cubmc C}) \qquad \Psi \from \Ho ({\cohnerve\cubmc C}) \to \Ho {\cubmc C}
\]
by setting $\Phi\pr{ [f] }=[f]$ and $\Psi \big( [f] \big) = [f]$. 
Note that we identify a morphism in $\cubmc{C}$ with the corresponding 1-simplex of $\cohnerve\cubmc C$, and note also that we define $\Psi$ by specifying its actions on the generating arrows.

To show that $\Phi$ is well-defined on the level of morphisms, it suffices to show
that, given a pair of objects $X,Y\in\cubmc C$ and an edge $\alpha \from\square^{1}\to\cubmc C\pr{X,Y}$
depicted as $f\to g$, we have $[f]=[g]$ in $\Ho\pr{\cohnerve \cubmc C}$.
This data gives rise to a $2$-simplex $({\id}_{X},f, g, \alpha) \from \str [2] \to \cubmc{C}$
of $\cohnerve\cubmc C$, so we have $[f]=[g]$ in $\Ho\pr{\cohnerve \cubmc C}$,
as required. 
A similar argument shows that $\Psi$ commutes with compositions and carries identity morphisms to identity maps. 
Thus $\Phi$ and $\Psi$ are well-defined functors which are inverses of each other, yielding the desired isomorphism.
\end{proof}
\begin{remark}
	This is a cubical analog of the result stated in \cite[Warn.~1.2.3.3]{lurie:htt}.
	Here, we are able to prove it without the additional assumption that $\cubmc{C}$ is locally Kan.
\end{remark}

\Cref{ho-cat-equiv} has the following corollary.
We write $E^1$ for the nerve of the thin groupoid with two objects 0,1.
\begin{corollary} \label{invertible-arrow-in-ho}
    Given an arrow $f \from [1] \to \cubmc{C}$ in a locally Kan cubical category $\cubmc{C}$, the following are equivalent:
    \begin{enumerate}
        \item $f$ becomes an isomorphism in $\Ho \cubmc{C}$;
	\item $f$ lifts to a map $\str(E^1) \to \cubmc{C}$;
        \item the adjoint transpose $\tilde{f} \from \simp{1} \to \cohnerve \cubmc{C}$ of $f$ extends to a map $E^1 \to \cohnerve \cubmc{C}$.
    \end{enumerate}
\end{corollary}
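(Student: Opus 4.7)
The plan is to prove the cycle (2) $\iff$ (3) $\iff$ (1), with (2) $\iff$ (3) being a direct adjunction argument and (1) $\iff$ (3) reducing to the classical Joyal characterization of equivalences in a quasicategory.

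For (2) $\iff$ (3), the key observation is that $\str\simp{1}$ is exactly the walking arrow $[1]$ as a cubical category (since $\str[1]$ has two objects with the only nontrivial mapping space being $\cube{0}$). Therefore, under the adjunction $\str \adj \cohnerve$ of \cref{explicit-description-of-cubical-str}, the arrow $f\from [1]\to \cubmc{C}$ is the mate of $\tilde{f}\from \simp{1}\to \cohnerve\cubmc{C}$. Applying the adjunction again to the inclusion $\simp{1}\ito E^1$, a lift $\str(E^1)\to \cubmc{C}$ of $f$ corresponds to an extension $E^1 \to \cohnerve\cubmc{C}$ of $\tilde{f}$.

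For (1) $\iff$ (3), I would first note that $\cohnerve\cubmc{C}$ is a quasicategory by \cref{locally-kan-to-qcat}. By \cref{ho-cat-equiv}, the isomorphism $\Ho(\cubmc{C}) \iso \Ho(\cohnerve \cubmc{C})$ sends $[f]$ to $[\tilde{f}]$, so $f$ is invertible in $\Ho(\cubmc{C})$ if and only if $\tilde{f}$ is invertible in $\Ho(\cohnerve\cubmc{C})$. Now I invoke the classical theorem of Joyal (see, e.g., \cite[Prop.~1.2.4.3]{lurie:htt}) that a $1$-simplex $\tilde f$ in a quasicategory is invertible in its homotopy category if and only if it extends along $\simp{1}\ito E^1$.

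The only mild subtlety is verifying $\str\simp{1}\iso [1]$ at the beginning; this follows immediately from the defining formulas for $\str[n]$, since for $n = 1$ the only nontrivial mapping space is $\cube{0} = \{\id\}$. Everything else is a formal consequence of the adjunction and of previously established results, so there is no substantive obstacle — the main work has already been done in \cref{ho-cat-equiv} and \cref{locally-kan-to-qcat}.
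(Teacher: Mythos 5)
Your proposal is correct and follows essentially the same route as the paper's (very terse) proof: both rest on \cref{locally-kan-to-qcat}, \cref{ho-cat-equiv}, the adjunction $\str \adj \cohnerve$ together with the identification $\str\simp{1} \cong [1]$, and Joyal's $E^1$-extension criterion for equivalences in a quasicategory, with your version merely organized as two biconditionals instead of the paper's cycle of implications (and spelling out the steps the paper leaves implicit). The only quibble is bibliographic: double-check the exact HTT reference for Joyal's criterion, as Prop.~1.2.4.3 may not be the precise statement you want.
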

\begin{proof}
    The homotopy-coherent nerve of $\cubmc{C}$ is a quasicategory by \cref{locally-kan-to-qcat}.
    Thus, we have implications (1) $\implies$ (3) $\implies$ (2) $\implies$ (1), where the first implication follows from \cref{ho-cat-equiv}.
\end{proof}

Define a functor $\cSusp \from \cSet \to \cCat$ which sends a cubical set $X$ to the cubical category $\cSusp X$ with two objects $0, 1$ and whose mapping spaces are
\[ \begin{array}{r@{ \ := \ }l  r@{ \ := \ }l}
    \cSusp X(0, 1) & X & \cSusp X(1, 0) & \varnothing \\
    \cSusp X(0, 0) & \{ \id[0] \} & \cSusp X (1, 1) & \{ \id[1] \}.
\end{array} \]

\begin{theorem} \label{model-str-ccat}
    The category of cubical categories carries a model structure where:
    \begin{itemize}
        \item cofibrations are the saturation of the set
        \[ \{ \varnothing \ito [0] \} \cup \{ \cSusp \bd \cube{n} \ito \cSusp \cube{n} \mid n \geq 0 \}; \]
        \item fibrant objects are locally Kan cubical categories; fibrations between fibrant objects are maps with the right lifting property against the set
        \[ \{ [0] \ito \str (E^1) \} \cup \{ \cSusp \dfobox \ito \cSusp \cube{n} \mid n \geq 1, \ i = 1, \dots, n \ \varepsilon = 0, 1 \}; \]
        \item weak equivalences are DK-equivalences, i.e.~cubical functors $F \from \cubmc{C} \to \cubmc{D}$ such that
        \begin{enumerate}
            \item $\Ho F \from \Ho \cubmc{C} \to \Ho \cubmc{D}$ is an equivalence of categories; and
            \item for all $x, y \in \cubmc{C}$, the cubical map
            \[ {F}_{x, y} \from \cubmc{C}(x, y) \to \cubmc{D}(x, y) \]
            is a weak equivalence in the Grothendieck model structure.
        \end{enumerate}
    \end{itemize}
    Moreover, the triangulation adjunction induces a Quillen equivalence
    \[ \adjunct{T_\bullet}{U_\bullet}{\cCat}{\Cat_{\Delta}} \]
    with the Bergner model structure on simplicial categories.
\end{theorem}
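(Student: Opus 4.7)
My plan is to obtain the model structure on $\cCat$ by transferring the Joyal model structure along the adjunction $\str \dashv \cohnerve \from \cCat \rightleftarrows \sSet$, declaring a cubical functor $F$ to be a weak equivalence or fibration exactly when $\cohnerve F$ is one, then to match this with the description in the statement and deduce the Quillen equivalence with the Bergner model structure on $\sCat$.

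For existence of the transferred structure, I would appeal to Kan's recognition theorem. The category $\cCat$ is locally presentable, so the nontrivial hypothesis is that relative $\str(J_{\Joyal})$-cell complexes (with $J_{\Joyal}$ a set of generating trivial cofibrations of the Joyal structure) become Joyal weak equivalences after applying $\cohnerve$. I would verify this by a path-object argument: on a locally Kan cubical category $\cubmc{C}$, build a functorial path object as an internal cotensor by $\str(E^1)$, using the right-closed structure of the geometric product on $\cSet$. The fact that $\cohnerve \cubmc{C}$ is a quasicategory (\cref{locally-kan-to-qcat}), combined with \cref{invertible-arrow-in-ho}, ensures this path object is well-behaved.

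To match the transferred structure to the one in the statement, I would adapt Lurie's proof of \cite[Theorem 2.2.5.1]{lurie:htt} to the cubical setting: using \cref{ho-cat-equiv} and \cref{invertible-arrow-in-ho}, one shows that $\cohnerve F$ is an equivalence of quasicategories if and only if $F$ is essentially surjective on homotopy categories and induces weak equivalences on all mapping cubical sets. A filtration of $\str \bd \simp{n} \to \str[n]$ by free cell attachments then identifies the cofibrations with those generated by the asserted set $\{\varnothing \to [0]\} \cup \{\cSusp \bd \cube{n} \to \cSusp \cube{n}\}$; a parallel analysis handles the trivial cofibrations between fibrant objects, yielding the asserted local-Kan characterization of fibrant objects.

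For the Quillen equivalence $T_\bullet \dashv U_\bullet$, I would use the factorization $\str_\Delta \cong T_\bullet \circ \str$ (dual to $\cohnerve_\Delta \cong \cohnerve \circ U_\bullet$ from \cite[Proposition 2.8]{kapulkin-voevodsky}) and apply two-out-of-three for Quillen equivalences: $\str \dashv \cohnerve$ is a Quillen equivalence by direct verification (cubicalizing the argument of \cite[Theorem 2.2.5.1]{lurie:htt}), and the composite $\str_\Delta \dashv \cohnerve_\Delta$ is the Lurie--Bergner Quillen equivalence. That $T_\bullet$ is left Quillen is immediate, since $T \cube{n} \cong (\simp{1})^n$ and $T$ preserves monomorphisms, so images of the asserted generators are generators of the Bergner structure on $\sCat$. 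The main obstacle will be the path-object construction in the second paragraph: the non-symmetry of the geometric product on $\cSet$ forces one to work only with the right-closed structure, and checking that $\cubmc{C}^{\str(E^1)}$ has the correct homotopy type on locally Kan $\cubmc{C}$ is the most delicate part of the argument.
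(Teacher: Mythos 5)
Your proposal takes a genuinely different route from the paper, and the route has a gap that I do not think can be repaired without abandoning the transfer strategy entirely.

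The paper does \emph{not} transfer along $\str \dashv \cohnerve$. It instead invokes Lurie's general machinery \cite[Prop.~A.3.2.4, Rem.~A.3.2.6]{lurie:htt} for producing a Bergner-style model structure on categories enriched over a suitable monoidal model category; this gives the existence of the model structure, the description of cofibrations and weak equivalences as DK-equivalences, and the Quillen equivalence with $\Cat_\Delta$ all at once. The identification of the fibrant objects as locally Kan cubical categories is then handled separately by a criterion of Stanculescu. Only afterward, in \cref{str-cohnerve-quillen-equiv}, is $\str \dashv \cohnerve$ shown to be a Quillen equivalence, and this is deduced from the already-established equivalence $T_\bullet \dashv U_\bullet$ by two-out-of-three. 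Your proposal reverses this logical order, which is not itself a problem, but it also changes the construction in a way that breaks the statement being proved.

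The gap is in the second and third paragraphs of your proposal. In a right-transferred model structure along $\cohnerve$, the weak equivalences are, \emph{by definition}, the cubical functors $F$ such that $\cohnerve F$ is a Joyal equivalence. The theorem asserts that the weak equivalences are the DK-equivalences. These two classes do \emph{not} coincide for non-fibrant cubical categories. A DK-equivalence $F \from \cubmc{C} \to \cubmc{D}$ between cubical categories whose mapping spaces are not Kan need not have $\cohnerve F$ a Joyal equivalence (indeed $\cohnerve \cubmc{C}$ and $\cohnerve \cubmc{D}$ need not even be quasicategories, so ``Joyal equivalence'' behaves badly). Conversely, a map inverted by $\cohnerve$ need not be a DK-equivalence. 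The same phenomenon already occurs for simplicial categories: the Bergner model structure is \emph{not} the model structure right-transferred from Joyal along $\cohnerve[\Delta]$, precisely because DK-equivalence and $\cohnerve[\Delta]$-equivalence only agree between locally Kan objects. Your claimed intermediate step, ``$\cohnerve F$ is an equivalence of quasicategories if and only if $F$ is essentially surjective on homotopy categories and induces weak equivalences on all mapping cubical sets,'' is therefore false as stated; it is only valid when restricted to fibrant objects, and in that restricted form it cannot be used to identify the transferred weak equivalences with DK-equivalences globally, nor to show the transferred fibrant objects are exactly the locally Kan categories. So even if Kan's recognition theorem produces \emph{a} model structure by transfer (which itself requires care with the path object, since $\cCat$ is not cotensored over $\sSet$ in an obvious enriched sense and the geometric product is non-symmetric), it would be a different model structure from the one in the statement, and the rest of your argument would not apply to it.

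To fix this you would have to abandon the transfer and argue as the paper does: verify the hypotheses of \cite[Prop.~A.3.2.4]{lurie:htt} for $(\cSet, \gprod)$ as a base, which among other things requires checking that the Grothendieck model structure on $\cSet$ is an excellent (or at least suitably combinatorial monoidal) model category; deduce the Quillen equivalence with $\Cat_\Delta$ from \cite[Rem.~A.3.2.6]{lurie:htt}; and then use a recognition result such as Stanculescu's, together with \cref{invertible-arrow-in-ho}, to pin down the fibrant objects as locally Kan cubical categories.
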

\begin{proof}
    For existence of the model structure as well as the description of the cofibrations and weak equivalences, we apply \cite[Prop.~A.3.2.4]{lurie:htt}.
    The Quillen equivalence follows from \cite[Rem.~A.3.2.6]{lurie:htt}.

    For the description of fibrant objects, we apply \cite[Prop.~2.3]{stanculescu}, where $I$ is the set of generating cofibrations in the theorem statement and $J$ is the set
    \[ J = \{ [0] \ito \str (E^1) \} \cup \{ \cSusp \dfobox \ito \cSusp \cube{n} \mid n \geq 1, \ i = 1, \dots, n, \ \varepsilon = 0, 1 \}. \]
    One verifies that a locally Kan cubical category is exactly a cubical category with the right lifting property with respect to $J$.
    We show that a weak equivalence between locally Kan cubical categories that has the right lifting property with respect to $J$ is an acyclic fibration; the remaining hypotheses of \cite[Prop.~2.3]{stanculescu} are immediate.

    To this end, fix such a map $F \from \cubmc{C} \xrightarrow{\sim} \cubmc{D}$.
    It suffices to show this map has the right lifting property with respect to the set of generating cofibrations
    \[ \{ \varnothing \ito [0] \} \cup \{ \cSusp \bd \cube{n} \ito \cSusp \cube{n} \mid n \geq 0 \}. \]
    The maps $\cSusp \bd \cube{n} \ito \cSusp \cube{n}$ are immediate, so it remains to show $F$ has the right lifting property against the map $\varnothing \ito [0]$.
    Given a square
    \[ \begin{tikzcd}
        \varnothing \ar[r] \ar[d] & \cubmc{C} \ar[d, "F"] \\
        {[0]} \ar[r, "y"] & \cubmc{D}
    \end{tikzcd} \]
    there exists $x \in \cubmc{C}$ and an isomorphism $f \from E[1] \to \Ho \cubmc{D}$ from $\Ho F(x)$ to $y$.
    As $\cubmc{D}$ is locally Kan, applying \cref{invertible-arrow-in-ho} gives a cubical functor $\overline{f} \from \str (E^1) \to \cubmc{D}$ such that $\Ho \overline{f} = f$ .
    The square
    \[ \begin{tikzcd}
        {[0]} \ar[r, "x"] \ar[d, hook] & \cubmc{C} \ar[d, "F"] \\
        \str (E^1) \ar[r, "\overline{f}"] & \cubmc{D}
    \end{tikzcd} \]
    admits a lift $g \from \str (E^1) \to \cubmc{C}$ by assumption.
    The object $g(1)$ is a lift of the starting square.
\end{proof}
\begin{proposition} \label{str-cohnerve-quillen-equiv}
    The adjunction
    \[ \adjunct{\str}{\cohnerve}{\sSet}{\cCat} \]
    is a Quillen equivalence.
\end{proposition}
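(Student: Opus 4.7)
The plan is to reduce the claim to the classical Joyal--Bergner Quillen equivalence via the Quillen equivalence $\adjunct{T_\bullet}{U_\bullet}{\cCat}{\sCat}$ established in \cref{model-str-ccat}, together with the $2$-out-of-$3$ property for Quillen equivalences. Composing the adjunctions $\str \adj \cohnerve$ and $T_\bullet \adj U_\bullet$ yields $T_\bullet \str \adj \cohnerve U_\bullet$, which by adjointness and the natural isomorphism $\cohnerve U_\bullet \iso \cohnerve[\Delta]$ from \cite[Prop.~2.8]{kapulkin-voevodsky} coincides with Lurie's Quillen equivalence $\str_\Delta \adj \cohnerve[\Delta]$ between the Joyal and Bergner model structures \cite[Thm.~2.2.5.1]{lurie:htt}. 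Consequently, once $\str \adj \cohnerve$ is verified to be a Quillen adjunction, $2$-out-of-$3$ for Quillen equivalences immediately delivers the desired Quillen equivalence.

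The remaining task is thus to show that $\str$ is left Quillen. Since $\str$ preserves colimits, it suffices to check it on generating (trivial) cofibrations of the Joyal model structure. For the cofibrations, this reduces to showing that each map $\str \bdsimp{n} \ito \str[n]$ lies in the saturation of $\{\varnothing \ito [0]\} \cup \{\cSusp \bdcube{m} \ito \cSusp \cube{m}\}$. For the trivial cofibrations, one must handle the inner horn inclusions $\horn{n}{k} \ito \simp{n}$ for $0 < k < n$ and the map $\simp{0} \ito E^1$; the latter is, tautologically, the generating trivial cofibration $[0] \ito \str(E^1)$ appearing in \cref{model-str-ccat}.

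The main obstacle lies in the cellular decompositions of $\str \bdsimp{n} \ito \str[n]$ and $\str \horn{n}{k} \ito \str[n]$, which are the cubical analogues of the classical simplicial computations of Dugger--Spivak \cite{dugger-spivak:rigidification}. Using the explicit description of $\str X$ recalled in \cref{explicit-description-of-cubical-str}, both inclusions can be filtered by total necklace length, and at each stage the attaching map is a pushout of some $\cSusp \bdcube{m} \ito \cSusp \cube{m}$ in the boundary case, respectively $\cSusp \dfobox \ito \cSusp \cube{m}$ in the inner horn case, where the missing face of the cube encodes the missing simplex $k$. Granting these cellular decompositions, $\str \adj \cohnerve$ is a Quillen adjunction, and the $2$-out-of-$3$ argument above completes the proof.
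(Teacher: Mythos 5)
Your overall strategy matches the paper's: deduce the Quillen equivalence from the classical Bergner--Joyal equivalence via the $T_\bullet \adj U_\bullet$ Quillen equivalence of \cref{model-str-ccat} and $2$-out-of-$3$. The difference lies entirely in how the Quillen \emph{adjunction} is established, and here your route has two gaps that make it substantially harder than the paper's.

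First, the paper never attacks trivial cofibrations directly. It invokes the criterion of Joyal--Tierney \cite[Prop.~7.15]{joyal-tierney:qcat-vs-segal}: it suffices to show $\str$ sends monomorphisms to cofibrations and $\cohnerve$ sends fibrations between fibrant objects to fibrations. The latter is easy because \cref{locally-kan-to-qcat} gives quasicategories and one checks that fibrations go to inner isofibrations. Your plan of checking ``generating (trivial) cofibrations of the Joyal model structure'' runs into a well-known subtlety: the Joyal model structure has no explicit set of generating trivial cofibrations. The inner horns together with $\simp{0}\ito E^1$ are only a \emph{pseudo-generating} set (they detect fibrations between fibrant objects, not all fibrations), so showing that $\str$ carries them to trivial cofibrations would not, on its own, show $\str$ preserves all trivial cofibrations. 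You would have to reinstate the Joyal--Tierney criterion anyway, at which point the inner-horn analysis becomes unnecessary. Second, for the cofibration half, the paper does not need a necklace-style cellular filtration in the spirit of Dugger--Spivak: it exhibits $\str(\bd\simp{n}) \ito \str[n]$ directly as a \emph{single} pushout of a generating cofibration $\cSusp\bd\cube{m}\ito\cSusp\cube{m}$ (since only the top mapping space $\str[n](0,n)\cong\cube{n-1}$ changes when passing from the boundary to the full simplex, and it changes by attaching a cube along its boundary). Your proposed cellular decompositions are plausible but left unestablished, and carrying them out would be significant additional work that the paper's argument avoids entirely. You should adopt the Joyal--Tierney criterion and observe the single-pushout description.
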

\begin{proof}
    We first show that $\str \adj \cohnerve$ is a Quillen adjunction.
    By \cite[Prop.~7.15]{joyal-tierney:qcat-vs-segal}, it suffices to show $\str \from \sSet \to \cCat$ sends monomorphisms to cofibrations and $\cohnerve \from \cCat \to \sSet$ sends fibrations between fibrant objects to fibrations.

    As $\str$ is a left adjoint, it suffices to show it takes boundary inclusions to cofibrations.
    This follows as the square
    \[ \begin{tikzcd}
        \cSusp \bd \cube{n} \ar[r, "0 \mapsto 0", "1 \mapsto n"'] \ar[d, hook] & \str (\bd \simp{n}) \ar[d, hook] \\
        \cSusp \cube{n} \ar[r, "0 \mapsto 0", "1 \mapsto n"'] & \str [n] 
    \end{tikzcd} \]
    is a pushout for all $n \geq 1$.
    The inclusion $\str(\bd \simp{0}) \ito \str [0]$ is a cofibration by definition.

    The functor $\cohnerve$ sends locally Kan quasicategories to quasicategories by \cref{locally-kan-to-qcat}.
    One verifies that it sends fibrations between fibrant objects to inner isofibrations between quasicategories, thus $\cohnerve$ preserves fibrations between fibrant objects.

    In the commutative triangle of adjunctions
    \[ \begin{tikzcd}
        \sSet \ar[rr, "{\str}", yshift=0.6ex] \ar[rd, "{\str}", xshift=0.3ex, yshift=0.3ex] & {} & \Cat_{\Delta} \ar[ll, yshift=-0.6ex, "{\cohnerve[\Delta]}"] \ar[ld, "{U_\bullet}", xshift=0.5ex, yshift=-0.5ex] \\
        {} & \cCat \ar[ur, "{T_\bullet}", xshift=-0.3ex, yshift=0.3ex] \ar[ul, "\cohnerve", xshift=-0.5ex, yshift=-0.5ex] & {}
    \end{tikzcd} \]
    the top adjunction is a Quillen equivalence by \cite[Thm.~2.2.5.1]{lurie:htt}.
    The bottom right adjunction is a Quillen equivalence by \cref{model-str-ccat}.
    Thus, $\str \adj \cohnerve$ is a Quillen equivalence by 2-out-of-3.
\end{proof}
Our final goal is to construct an equivalence of quasicategories
$\cohnerve[\Delta] \sKan \to \cohnerve \cKan$ and use it to identify mapping spaces of cubical categories with their quasicategorical counterparts.
The functor $U:\sSet \to\CS$ is lax monoidal, so it induces a cubical
functor
\[
U_{\bullet}\sSet \to\CS,
\]
which restricts to a cubical functor $U_{\bullet}\sKan\to\cKan$.
Applying the cubical homotopy coherent nerve yields a functor
$N_{\Delta}\pr{\sKan}\to \cohnerve\pr{\cKan}$.
\begin{theorem}\label{kan_comparison}
The functor $\cohnerve[\Delta]\sKan\to \cohnerve\cKan$ is a
categorical equivalence.
\end{theorem}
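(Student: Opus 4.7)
The plan is to show that $\overline{U}\from U_{\bullet}\sKan \to \cKan$, the cubical functor induced by $U\from \sSet \to \cSet$ (sending $X \mapsto UX$ and acting on mapping spaces by the canonical map $U\sSet(X,Y) \to \cSet(UX,UY)$), is a DK-equivalence between locally Kan cubical categories. Applying the right Quillen equivalence $\cohnerve$ of \cref{str-cohnerve-quillen-equiv}, which preserves weak equivalences between fibrant objects by Ken Brown's lemma, yields a categorical equivalence of quasicategories. Via the natural isomorphism $\cohnerve[\Delta] \cong \cohnerve \circ U_{\bullet}$, this is precisely the functor in the statement.

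For essential surjectivity on the induced map of homotopy categories, I would take any $Y \in \cKan$ and let $X \in \sKan$ be a Kan-fibrant replacement of $TY$. The resulting weak equivalence $TY \to X$ transposes under $T \adj U$ to a map $Y \to UX$, which by the Quillen equivalence \cref{thm:cset-model-structure} (with $Y$ cofibrant and $X$ fibrant) is a weak equivalence of cubical sets; since both sides are cubical Kan complexes, this is a homotopy equivalence in $\cKan$.

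For the mapping space equivalence, the strategy is to identify $\overline{U}_{X,Y}$ with a tractable map. The natural isomorphism $\cSet(UX,UY) \cong U\sSet(TUX,Y)$, arising from $T$ being strong monoidal together with the adjunction $T \adj U$, allows us to trace through the definition of the enriched structure on $\overline{U}$ from \cref{ex:monoidal-functor-enriched-ascend}; using the triangle identities, one verifies that $\overline{U}_{X,Y} \from U\sSet(X,Y) \to \cSet(UX,UY) \cong U\sSet(TUX,Y)$ coincides with $U(\epsilon_X^*)$, where $\epsilon_X^* \from \sSet(X,Y) \to \sSet(TUX,Y)$ denotes precomposition with the counit $\epsilon_X \from TUX \to X$. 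Since $UX$ is cofibrant and $X$ is fibrant, $\epsilon_X$ is a weak equivalence; the Kan--Quillen model structure being simplicial, $\epsilon_X^*$ is then a weak equivalence between Kan complexes, and $U$ preserves this by Ken Brown's lemma. The main obstacle in the argument is this identification $\overline{U}_{X,Y} \cong U(\epsilon_X^*)$; once it is in place, the remaining steps are direct consequences of the Quillen equivalence $T \adj U$ and standard model-categorical techniques.
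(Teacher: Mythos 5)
This is essentially the same approach as the paper's proof: reduce to showing $U_\bullet\sKan \to \cKan$ is a DK-equivalence, deduce essential surjectivity from the Quillen equivalence $T \dashv U$, and identify the map on mapping spaces with $U(\varepsilon_X^*)$, which is a homotopy equivalence by SM7 and Ken Brown. The one step you flag as "one verifies"---that $\overline{U}_{X,Y}$ coincides with $U(\varepsilon_X^*)$ under the isomorphism $\cKan(UX,UY) \cong U\sKan(TUX,Y)$---is precisely what the paper establishes with its large commutative diagram, so your outline matches the paper's argument exactly modulo spelling out that diagram chase.
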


\begin{proof}
It suffices to show that the functor $\Psi:U_{\bullet}\sKan\to\cKan$
is a Dwyer--Kan equivalence. The Quillen equivalence $T\dashv U$
implies that $\Ho\pr{\Psi}$ is essentially surjective. Therefore,
we are reduced to showing that the map
\[
\Psi_{X,Y}:U\pr{\sKan\pr{X,Y}}\to\cKan\pr{UX,UY}
\]
is a homotopy equivalence for every pair of objects $X,Y\in\sKan$.
We claim that this map is equal to the composite
\[
\theta:U\pr{\sKan\pr{X,Y}}\xrightarrow{U\pr{\varepsilon_{X}^{*}}}U\pr{\sKan\pr{TU\pr X,Y}}\cong\cKan\pr{UX,UY},
\]
where $\varepsilon_{X}^{*}$ denotes the precomposition along the
counit map $\varepsilon_{X}:TU\pr X\to X$. Since $X$ is a Kan complex,
the map $\varepsilon_{X}$ is a weak homotopy equivalence, and so this
will show that $U\pr{\varepsilon_{X}^{*}}$ is a homotopy equivalence.

To identify $\Psi_{X,Y}$ with $\theta$, recall that the structure
map $\alpha:UA\otimes UB\to U\pr{A\times B}$ of the lax monoidal
functor $U$ is given by the composite
\[
UA\otimes UB\xrightarrow{\eta_{UA\otimes UB}}UT\pr{UA\otimes UB}\cong U\pr{\pr{TUA}\times\pr{TUB}}\xrightarrow{U\pr{\varepsilon_{A}\times\varepsilon_{B}}}U\pr{A\times B},
\]
where $\eta$ denotes the unit and the counit of the adjunction $T\dashv U$,
and the middle isomorphism comes from the strong monoidality of $T$.
By definition, the map $\Psi_{X,Y}$ is the transpose of the composite
\[
\Psi'_{X,Y}:U\sKan\pr{X,Y}\otimes UX\xrightarrow{\alpha}U\pr{\sKan\pr{X,Y}\times X}\xrightarrow{U\pr{\mathrm{eval}_{X}}}UY.
\]
Using the above description of $\alpha$, we find that the transpose
of $\Psi'_{X,Y}$ is equal to the outer right composite from
the top left to $Y$ in the following diagram:
\[\begin{tikzcd}[scale cd = .65]
	{TU(\sKan(X,Y)\otimes UX)} & {TUT(U\sKan(X,Y)\otimes UX)} & {TU(TU\sKan(X,Y)\times TU(X))} & {TU(\sKan(X,Y)\times X)} & TUY \\
	{TU(\sKan(X,Y)\otimes UX)} & {T(U\sKan(X,Y)\otimes UX)} & {TU\sKan(X,Y)\times TU(X)} & {\sKan(X,Y)\times X} & Y \\
	& {T(U\sKan(X,Y)\otimes UX)} & {TU\sKan(X,Y)\times TU(X)} & {\sKan(X,Y)\times TUX} & {\sKan(TUX,Y)\times TUX} \\
	& {T(U\sKan(TUX,Y)\otimes UX)} && {\sKan(TUX,Y)\times TUX}
	\arrow[""{name=0, anchor=center, inner sep=0}, "{T\eta}", from=1-1, to=1-2]
	\arrow[equal, from=1-1, to=2-1]
	\arrow["\cong", from=1-2, to=1-3]
	\arrow["{\varepsilon T}", from=1-2, to=2-2]
	\arrow["{TU(\varepsilon\times\varepsilon)}", from=1-3, to=1-4]
	\arrow["\varepsilon", from=1-3, to=2-3]
	\arrow["{TU(\mathrm{eval})}", from=1-4, to=1-5]
	\arrow["\varepsilon", from=1-4, to=2-4]
	\arrow["\varepsilon", from=1-5, to=2-5]
	\arrow[""{name=1, anchor=center, inner sep=0}, equal, from=2-1, to=2-2]
	\arrow["\cong", from=2-2, to=2-3]
	\arrow[equal, from=2-2, to=3-2]
	\arrow[""{name=2, anchor=center, inner sep=0}, "{\varepsilon\times \varepsilon}", from=2-3, to=2-4]
	\arrow[equal, from=2-3, to=3-3]
	\arrow[""{name=3, anchor=center, inner sep=0}, "{\mathrm{eval}}", from=2-4, to=2-5]
	\arrow["\cong", from=3-2, to=3-3]
	\arrow["{T(U\varepsilon^*\otimes \mathrm{id})}"', from=3-2, to=4-2]
	\arrow[""{name=4, anchor=center, inner sep=0}, "{\varepsilon\times \mathrm{id}}"', from=3-3, to=3-4]
	\arrow["{TU\varepsilon^*\times \mathrm{id}}"', from=3-3, to=4-4]
	\arrow["{\mathrm{id}\times \varepsilon}"', from=3-4, to=2-4]
	\arrow[""{name=5, anchor=center, inner sep=0}, "{\varepsilon^*\times \mathrm{id}}"', from=3-4, to=3-5]
	\arrow["{\mathrm{eval}}"', from=3-5, to=2-5]
	\arrow["\cong"', from=4-2, to=4-4]
	\arrow["{\varepsilon\times \mathrm{id}}"', from=4-4, to=3-5]
	\arrow["{(\text{a})}"{description}, draw=none, from=0, to=1]
	\arrow["{(\text{c})}"{description}, draw=none, from=2, to=4]
	\arrow["{(\text{b})}"{description}, draw=none, from=3, to=5]
\end{tikzcd}\]

Square (a) commutes by the triangle identity, square (b) commutes
by the definition of the evaluation map, and square (c) obviously
commutes. All the other squares commute by the naturality of the counit and the monoidal structure maps.
So the transpose of $\Psi'_{X,Y}$ is equal to the outer left composite,
which is exactly the transpose of $\theta$. Thus $\Psi_{X,Y}$ and
$\theta$ are equal, as claimed.
\end{proof}
We now use \cref{kan_comparison} to identify mapping spaces of cubical
categories with those of the cubical homotopy-coherent nerve (\cref{repr-ex-infty-t-commute}). 
To do so, we use the following fact about cubical natural transformations.
\begin{proposition} \label{cubical-nat-transf-to-qcat-nat-transf}
    Let $F, G \from \cubmc{C} \to \cubmc{D}$ be cubical functors.
    Any natural transformation $\alpha$ from $F$ to $G$ induces a natural transformation
    \[ \alpha \from \cohnerve \cubmc{C} \times \simp{1} \to \cohnerve \cubmc{D} \]
    from $\cohnerve F$ to $\cohnerve G$.
\end{proposition}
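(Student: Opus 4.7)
The plan is to package the data $(F, G, \alpha)$ into a single cubical functor out of a cylinder, then apply the homotopy-coherent nerve. Specifically, write $[1]$ for the ordinal $\{0 < 1\}$ regarded as a cubical category with discrete mapping spaces, form the product $\cubmc{C} \times [1]$ in $\cCat$, and construct a cubical functor $H_\alpha \from \cubmc{C} \times [1] \to \cubmc{D}$ whose restrictions to $\cubmc{C} \times \{0\}$ and $\cubmc{C} \times \{1\}$ are $F$ and $G$ respectively.

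Once this is done, the conclusion comes essentially for free: since $\cohnerve$ is a right adjoint (\cref{str-cohnerve-quillen-equiv}) it preserves products, and one checks directly that $\cohnerve [1] \cong \simp{1}$ because any cubical functor $\str [n] \to [1]$ is determined by a monotone map $[n] \to [1]$ on objects. Applying $\cohnerve$ to $H_\alpha$ therefore yields a map
\[ \cohnerve \cubmc{C} \times \simp{1} \cong \cohnerve(\cubmc{C} \times [1]) \xrightarrow{\cohnerve H_\alpha} \cohnerve \cubmc{D}, \]
and the naturality of this product-preservation identifies its restrictions along the endpoint inclusions $\{0\}, \{1\} \ito \simp{1}$ with $\cohnerve F$ and $\cohnerve G$ respectively.

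The substantive step is therefore the construction of $H_\alpha$. On objects, set $H_\alpha(X, 0) := FX$ and $H_\alpha(X, 1) := GX$. The non-empty hom-spaces of $\cubmc{C} \times [1]$ are $(\cubmc{C} \times [1])((X, i), (Y, j)) = \cubmc{C}(X, Y)$ whenever $i \leq j$; on the same-level strata I would use $F_{X, Y}$ and $G_{X, Y}$, and on the crossing stratum $(X, 0) \to (Y, 1)$ I would use either of the two composites
\[ \cubmc{C}(X, Y) \xrightarrow{F_{X, Y}} \cubmc{D}(FX, FY) \xrightarrow{\alpha_Y \circ -} \cubmc{D}(FX, GY) \quad \text{or} \quad \cubmc{C}(X, Y) \xrightarrow{G_{X, Y}} \cubmc{D}(GX, GY) \xrightarrow{- \circ \alpha_X} \cubmc{D}(FX, GY), \]
which agree by the naturality square of \cref{def:cubical-nat-transf}.

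The main obstacle I expect is verifying the composition axiom for $H_\alpha$ in the cross-level cases. Composing $(X, 0) \to (Y, 0) \to (Z, 1)$ reduces to functoriality of $F$ together with associativity of composition in $\cubmc{D}$, while composing $(X, 0) \to (Y, 1) \to (Z, 1)$ requires precisely the naturality of $\alpha$ in order to slide $\alpha_Y$ past $F_{Y,Z}$ and rewrite it as $G_{Y,Z}$. The unit axiom and same-level composition axioms are immediate from those for $F$ and $G$, so all the homotopical content of the construction is concentrated in this single enriched naturality identity.
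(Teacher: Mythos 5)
Your proposal is correct and matches the paper's proof: both encode the enriched natural transformation $\alpha$ as a cubical functor $\cubmc{C} \times [1] \to \cubmc{D}$ and then apply $\cohnerve$, using that it preserves products and that $\cohnerve[1] \cong \simp{1}$. The one point the paper is more careful about — and you gloss over — is that since $(\cSet, \otimes)$ is not cartesian, the product $\cubmc{C} \times [1]$ in $\cCat$ needs to be checked to exist with the expected hom-spaces; this works precisely because $[1]$ has discrete mapping spaces, so $\cubmc{C}(X,Y) \otimes [1](i,j) \cong \cubmc{C}(X,Y) \times [1](i,j)$.
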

\begin{proof}
Given an ordinary category $\cubmc O$, regarded as a cubical category
with discrete mapping spaces, define a cubical category $\cubmc C\times\cubmc O$
as follows: Objects are the pairs $\pr{c,x}$, where $c\in\cubmc C$
and $x\in\cubmc O$. Mapping spaces are given by
\[
\pr{\cubmc C\times\cubmc O}\pr{\pr{c_{0},x_{0}},\pr{c_{1},x_{1}}}=\cubmc C\pr{c_{0},c_{1}}\otimes\cubmc O\pr{x_{0},x_{1}}.
\]
We define the composition and the identity maps of $\cubmc C\times \cubmc{O}$
so that the maps $\cubmc C\pr{c_{0},c_{1}}\otimes\cubmc O\pr{x_{0},x_{1}}\to\cubmc C\pr{c_{0},c_{1}}$
and $\cubmc C\pr{c_{0},c_{1}}\otimes\cubmc O\pr{x_{0},x_{1}}\to\cubmc O\pr{x_{0},x_{1}}$
determine cubical functors from $\cubmc C\times\cubmc O$ to $\cubmc C$
and $\cubmc O$. 
These maps exhibit $\cubmc C\times\cubmc O$
as a product of $\cubmc C$ and $\cubmc O$ in $\Cat_{\square}$. (Note
that the monoidal category $\pr{\mathsf{cSet}, \otimes}$ is \textit{not} cartesian
monoidal, so cartesian products of cubical categories are in general
difficult to describe.)

Unwinding the definitions, we can identify enriched natural transformations
$F_{0}\Rightarrow F_{1}$ of cubical functors $\cubmc C\to\cubmc D$ with
cubical functors $H:\cubmc C\times[1]\to\cubmc D$ such that $\restr{H}{\cubmc{C} \times \{ \varepsilon \} }=F_{\varepsilon}$.
The claim then follows from the isomorphism of cubical categories
$\cohnerve\pr{\cubmc C\times[1]}\cong \cohnerve\pr{\cubmc C}\times \cohnerve\pr{[1]}\cong \cohnerve\pr{\cubmc C}\times\Delta^{1}$.
\end{proof}

We highlight one subtle point: since the geometric product of cubical sets is not symmetric, the opposite of a cubical category $\cubmc C$ is not well-defined. 
Therefore, we can speak of the \emph{covariant} representable functors of $\cubmc C$, but not the \emph{contravariant} ones, much less the bivariant hom-functor.
Nonetheless, the mapping spaces of $\cubmc C$ determine an ordinary functor $\cubmc{C}(-, -) \from \cubmc C_{0}^{\op}\times \cubmc{C}_0 \to \cKan$, and we have the following corollary:
\begin{corollary} \label{repr-ex-infty-t-commute}
    Let $X$ be an object in a locally Kan cubical category $\cubmc C$.
    \begin{enumerate}
        \item The functor
        \[
        \cohnerve\cubmc C \xrightarrow{\cohnerve\pr{\cubmc C\pr{X,-}}}\cohnerve\cKan \simeq \cohnerve[\Delta] \sKan
        \]
        is naturally equivalent to the representable $(\cohnerve \cubmc{C})(X, -)$.
        \item The functor
        \[
        \cohnerve (\cubmc C_0)^\op \times \cohnerve(\cubmc C_0) \xrightarrow{\cohnerve \cubmc C(\mathord{-}, -) }\cohnerve \cKan \simeq \cohnerve[\Delta] \sKan
        \]
	is naturally equivalent to the restriction of the hom-functor $(\cohnerve \cubmc{C})(\mathord{-}, -)$ to $\cohnerve (\cubmc{C}_0)^\op \times \cohnerve (\cubmc{C}_0) = \nerve (\cubmc{C}_0)^\op\times \nerve (\cubmc{C}_0)$.
    \end{enumerate}
\end{corollary}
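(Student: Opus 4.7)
The plan is to reduce both parts to the analogous result for simplicial categories (see, e.g., Lurie, \emph{Higher Topos Theory}), using the machinery of the appendices, especially \cref{kan_comparison} (comparison of cubical and simplicial Kan complexes) and \cref{cubical-nat-transf-to-qcat-nat-transf} (promotion of enriched natural transformations).

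For part (1), I would proceed as follows. Since $\cubmc{C}$ is locally Kan, hence fibrant in the cubical Bergner model structure of \cref{model-str-ccat}, and since $T_\bullet \dashv U_\bullet$ is a Quillen equivalence, there is a DK-equivalence $\cubmc{C} \xrightarrow{\sim} U_\bullet \cubmc{D}$ with $\cubmc{D}$ a locally Kan simplicial category, obtained by taking a Bergner-fibrant replacement of $T_\bullet \cubmc{C}$ and applying $U_\bullet$ (which preserves fibrancy as the right Quillen functor). Writing $X' \in \cubmc D$ for the image of $X$, the enriched representable $\cubmc{C}(X,-) \from \cubmc{C} \to \cKan$ corresponds, via this DK-equivalence together with the cubical functor $U_\bullet \sKan \to \cKan$ of \cref{kan_comparison}, to $U_\bullet(\cubmc{D}(X',-)) \from U_\bullet \cubmc{D} \to U_\bullet \sKan$. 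Applying $\cohnerve$ and using the natural isomorphism $\cohnerve[\Delta] \cong \cohnerve \circ U_\bullet$, the functor $\cohnerve(\cubmc C(X,-))$ is identified (through the equivalences $\cohnerve \cubmc{C} \simeq \cohnerve[\Delta]\cubmc{D}$ and $\cohnerve \cKan \simeq \cohnerve[\Delta]\sKan$) with $\cohnerve[\Delta](\cubmc{D}(X',-))$. The classical simplicial statement identifies the latter with the representable $(\cohnerve[\Delta]\cubmc{D})(X',-)$, and transporting back along the same equivalences gives $(\cohnerve \cubmc C)(X, -)$, as desired.

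Part (2) follows from the same strategy applied in two variables: because the bivariant hom functor is \emph{unenriched}, it factors through $\cohnerve(\cubmc C_0)^{\op} \times \cohnerve(\cubmc C_0)$, and we can invoke part (1) in the target variable parametrically in the source variable (or directly invoke a two-variable version of the simplicial statement). The main obstacle I anticipate is coherence: each step is individually routine, but assembling them into a genuine natural equivalence of functors to $\cohnerve[\Delta]\sKan$ requires that the zig-zag of DK-equivalences be compatible with enriched Yoneda functoriality. \Cref{cubical-nat-transf-to-qcat-nat-transf} is the essential input that lets us promote each enriched natural transformation in the zig-zag to an honest natural transformation of functors between quasicategories.
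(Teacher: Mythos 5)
Your proposal is correct and follows essentially the same route as the paper: reduce to the case $\cubmc{C} \simeq U_\bullet \cubmc{D}$ for a locally Kan simplicial category $\cubmc{D}$ (the paper takes the specific replacement $U_\bullet \Ex^\infty_\bullet T_\bullet \cubmc{C}$, which is bijective on objects, rather than a generic Bergner-fibrant replacement of $T_\bullet\cubmc{C}$), transport along this DK-equivalence using \cref{cubical-nat-transf-to-qcat-nat-transf} and \cref{kan_comparison}, and conclude from the known simplicial statement (Kerodon 03N7) together with $\cohnerve[\Delta] \cong \cohnerve \circ U_\bullet$. The coherence concern you flag is handled in the paper exactly as you anticipate, via \cref{cubical-nat-transf-to-qcat-nat-transf}.
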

\begin{proof}
	Using \cref{cubical-nat-transf-to-qcat-nat-transf}, we can replace $\cubmc C$ appropriately (e.g.\ by $U_{\bullet}\Ex_{\bullet}^{\infty}T_{\bullet}\cubmc C$) and assume that $\cubmc C = U_{\bullet}\cubmc D$ for some locally Kan
simplicial category $\cubmc D$. 
In this case, we have diagrams
\[ \begin{tikzcd}[column sep = 4.2em, row sep = 2.8em]
    \cohnerve[\Delta] \cubmc{D} \ar[r, "{\cohnerve[\Delta] \cubmc{D}(X, -)}"] \ar[d, "\cong"'] & \cohnerve[\Delta] \sKan \ar[d, "\simeq"] \\
    \cohnerve \cubmc{C} \ar[r, "{\cohnerve \cubmc{C}(X, -)}"] & \cohnerve \cKan
\end{tikzcd} \qquad \begin{tikzcd}[column sep = 4.2em, row sep = 2.8em]
    \cohnerve[\Delta] (\cubmc{D}_0)^\op \times \cohnerve[\Delta] (\cubmc{D}_0) \ar[r, "{\cohnerve[\Delta] \cubmc{D}(-, \, -)}"] \ar[d, "\cong"'] & \cohnerve[\Delta] \sKan \ar[d, "\simeq"] \\
    \cohnerve (\cubmc{C}_0)^\op \times \cohnerve (\cubmc{C}_0) \ar[r, "{\cohnerve \cubmc{C}(-, \, -)}"] & \cohnerve \cKan
\end{tikzcd} \]
commuting up to equality.
The top arrows admit the desired equivalence by \cite[03N7]{kerodon}.
\end{proof}

\end{appendices}
\section*{Acknowledgment}

K.A.~acknowledges the support of JSPS KAKENHI Grant Number 24KJ1443.
D.C.~acknolwedges the support of the NSERC Canada Graduate Scholarship.
K.K.~acknowledges the support of the NSERC Discovery Grant ``Applied Homotopy Theory''.

 \bibliographystyle{amsalphaurlmod}
 \bibliography{all-refs.bib}

\end{document}